\documentclass[12pt,a4paper]{amsart}

\usepackage{amsthm,amssymb,verbatim,amsmath,amsfonts, amscd, color} %, a4wide
\usepackage{tikz-cd}

%%%%%%%%%%%%%%%%%%%%%%%%%%%%%%%%%
%% style
%%%%%%%%%%%%%%%%%%%%%%%%%%%%%%%%%

\usepackage[left=25mm,right=25mm,top=30mm,bottom=30mm]{geometry}
\usepackage{enumitem}
\setlist{leftmargin=10mm, topsep=1mm, itemsep=1pt}

\pagestyle{plain}

\usepackage[sc,outermarks]{titlesec}
\titleformat{\section}{\normalsize\sc\center}{\thesection}{0mm}{ }
\titlespacing{\section}{0mm}{3em}{1em}

\usepackage{hyperref} %
\hypersetup{
    colorlinks=true,       % false: boxed links; true: colored links
    linkcolor=red,          % color of internal links (change box color with linkbordercolor)
    citecolor=black,        % color of links to bibliography
  %  filecolor=cyan,         % color of file links
    urlcolor=blue        % color of external links
}

\setlength{\footskip}{8mm}

%%%%%%%%%%%%%%%%%%%%%%%%%%%%%%%%%
%% just for the writing process
%%%%%%%%%%%%%%%%%%%%%%%%%%%%%%%%%

%\usepackage{showlabels} %for the writing process 
%\usepackage{showkeys}  %for the writing process
%\usepackage{refcheck}   %for the writing process
%\usepackage{todonotes}

%%%%%%%%%%%%%%%%%%%%%%%%%%%%%%%%%
%% style
%%%%%%%%%%%%%%%%%%%%%%%%%%%%%%%%%

\theoremstyle{plain}\newtheorem{Theorem}{Theorem}[section]
\theoremstyle{plain}
\theoremstyle{plain}\newtheorem{Corollary}[Theorem]{Corollary}
\theoremstyle{plain}\newtheorem{Lemma}[Theorem]{Lemma}
\theoremstyle{plain}\newtheorem{Proposition}[Theorem]{Proposition}
\theoremstyle{definition}
\theoremstyle{definition}
\theoremstyle{definition}
\theoremstyle{definition}\newtheorem{Remark}[Theorem]{Remark}
\theoremstyle{definition}
\theoremstyle{definition}\newtheorem{Notation}[Theorem]{Notation}
\theoremstyle{definition}\newtheorem{Hypothesis}[Theorem]{Hypothesis}
\theoremstyle{definition}
\theoremstyle{definition}
\theoremstyle{definition}
\theoremstyle{definition}
\theoremstyle{definition}
\theoremstyle{definition}\newtheorem{Notation/Definition}[Theorem]{Notation/Definition}
\theoremstyle{definition}
\theoremstyle{plain}

\def\Aut{\mathrm{Aut}} 
\def\Syl{\mathrm{Syl}}
\def\im{\mathrm{im}}
\def\rad{\mathrm{rad}}

\def\dim{\mathrm{dim}}
 \def\End{\mathrm{End}} 
\def\Ext{\mathrm{Ext}}
  \def\Hom{\mathrm{Hom}}

\def\ker{\mathrm{ker}}

 \def\Irr{\mathrm{Irr}}
 \def\IBr{\mathrm{IBr}} 

\def\mod{\mathrm{mod}} \def\rad{\mathrm{rad}}

\def\soc{\mathrm{soc}}

\newcommand{\hd}{\operatorname{hd}}

\newcommand{\PSL}{\operatorname{PSL}}
\newcommand{\SL}{\operatorname{SL}}
\newcommand{\PGL}{\operatorname{PGL}}
\newcommand{\GL}{\operatorname{GL}}
\newcommand{\PSU}{\operatorname{PSU}}
\newcommand{\SU}{\operatorname{SU}} 
\newcommand{\PGU}{\operatorname{PGU}}
\newcommand{\GU}{\operatorname{GU}}

\newcommand{\Sc}{{\text{Sc}}}

\newcommand{\Out}{{\text{Out}}} 

%\newcommand{\soc}{{\text{soc}}}
%\newcommand{\Ext}{{\text{Ext}}}

 %%%%%%%%%%%%%%%%%%%%%%%%%%%%%%%%%%%%%
 %% DOCUMENT
  %%%%%%%%%%%%%%%%%%%%%%%%%%%%%%%%%%%%%

\begin{document}

\title[Principal $2$-blocks with wreathed defect groups up to splendid Morita equivalence]
{Principal $2$-blocks with wreathed defect groups up to splendid Morita equivalence}\date{\today}
\author{Shigeo Koshitani, Caroline Lassueur and Benjamin Sambale} 

\dedicatory{To the memory of Professor Atumi Watanabe}

\address{S. Koshitani, Department of Mathematics and Informatics,
Graduate School of Science, Chiba
University, 1-33 Yayoi-cho, Inage-ku,Chiba 263-8522, Japan}
\email{koshitan@math.s.chiba-u.ac.jp} 
\address{C. Lassueur, 
Institut f{\"u}r Algebra, Zahlentheorie und Diskrete Mathematik, Leibniz
Universit{\"a}t Hannover, Welfengarten 1, D-30167 Hannover
%FB Mathematik, Rheinland-Pf\"alzische Technische 
%Universit\"at Kaiserslautern-Landau, Postfach 3049, D-67653 Kaiserslautern,
Germany}
\email{lassueur@mathematik.uni-kl.de} 
\address{B. Sambale, Institut f{\"u}r Algebra, Zahlentheorie und Diskrete Mathematik, Leibniz
Universit{\"a}t Hannover, Welfengarten 1, D-30167 Hannover, Germany} 
\email{sambale@math.uni-hannover.de} 
\thanks{The first author
was partially supported by the Japan Society for Promotion of Science (JSPS),
Grant-in-Aid for Scientific Research (C)19K03416, 2019--2021, 
and also by the Research Institute for Mathematical Sciences (RIMS),
an International Joint Usage/Research Center located in Kyoto University.
The second author
was supported by the DFG SFB/TRR 195.} 
\keywords{wreathed $2$-group, Morita equivalence, splendid Morita equivalence, Puig's Finiteness 
Conjecture, principal block,  trivial source module, $p$-permutation module, Scott module, 
Brauer indecomposability, decomposition matrix}
\subjclass[2010]{20C05, 20C20, 20C15, 20C33,16D90} 

%\dedicatory{ Dedicated to ?????
%}
 
\begin{abstract} 
We classify principal $2$-blocks of finite groups $G$ with Sylow $2$-subgroups isomorphic 
to a wreathed $2$-group $C_{2^n}\wr C_2$ with $n\geq 2$ up to Morita equivalence  and 
up to splendid Morita equivalence. As a consequence,  we obtain 
that Puig's Finiteness Conjecture holds for such blocks.
Furthermore, we obtain a classification of such groups modulo $O_{2'}(G)$, 
which is a purely group theoretical result and of independent interest. 
Methods previously applied to blocks of tame representation type are used. They are, however, further
developed in order to deal with blocks of wild representation type.
 \end{abstract}

\maketitle

\pagestyle{myheadings} 
\markboth{S. Koshitani, C. Lassueur and B. Sambale}
{Principal $2$-blocks with wreathed defect groups up to splendid Morita equivalence}

 %%%%%%%%%%%%%%%%%%%%%%%%%%%%%%%%%%%%%
 %% DOCUMENT
  %%%%%%%%%%%%%%%%%%%%%%%%%%%%%%%%%%%%%

\section{Introduction}
Let $k$ be an algebraically closed field of positive characteristic~$p$. 
%and, given a finite group~$G$ of order divisible by $p$, 
%let $B_{0}(kG)$ denote the principal block of the group algebra $kG$. 
A splendid Morita equivalence between two block algebras $B_1$ and $B_2$ 
of finite groups $G_1$ and $G_2$ of order divisible by $p$ is 
a Morita equivalence which is induced by a $(B_1,B_2)$-bimodule 
(and its dual) which is a $p$-permutation module when regarded 
as a one-sided $k(G_1\times G_2)$-module. 
Such equivalences play an important role in the modular representation 
theory of finite groups as they  preserve many important invariants 
such as the defect groups or the generalised decomposition numbers, 
and encode the structure of the source algebras. In this respect,  
Puig's finiteness conjecture
(see \cite[6.2]{Bro94} or \cite[Conjecture 6.4.2]{Lin18})
extends Donovan's conjecture to 
include the structure of the source algebra of $p$-blocks 
and  postulates that given  a  finite $p$-group~$D$,  there  are  only finitely  
many  interior $D$-algebras, up  to  isomorphism,   
which  are  source algebras of $p$-blocks of 
finite groups with defect group~$D$. This is equivalent 
to postulating that there are only finitely many splendid Morita equivalence 
classes of $p$-blocks of finite groups with defect group~$D$.
\par
In a series of previous articles \cite{KL20a,KL20b,KLS20} 
the authors classified principal block algebras of tame representation type 
up to splendid Morita equivalence, that is, in the case in which~$p=2$ and 
the Sylow $2$-subgroups of the groups considered are either dihedral, semi-dihedral, or 
generalised quaternion $2$-groups. As a corollary, the validity of 
Puig's conjecture is verified for this class of $2$-blocks. 
(The tame domestic case was settled in \cite{CEKL11}.) 
The aim of the present article is to give a first try at applying similar methods 
in {\it wild} representation type under good hypotheses: 
we investigate here groups with Sylow $2$-subgroups isomorphic to 
a wreathed $2$-group $C_{2^n}\wr C_2$ with $n\geq 2$.  
We choose this defect group for its many similarities with the tame cases. 
In this respect, from the group theory point of view,  we strongly rely on the 
facts that the wreathed $2$-groups $C_{2^n}\wr C_2$ have $2$-rank $2$ 
and an automorphism group which is a $2$-group, whereas from the modular 
representation theory point of view we rely on the Brauer indecomposability 
of Scott modules with wreathed vertices proved by the first author and Tuvay in~\cite{KT20}.  
\par
In order to state our main results, we first need to introduce some notation.  
Given a finite group $G$ and $H\leq G$,  we set $\Delta H:=\{(h,h)\in G\times G\mid h\in H\}$ 
and we recall that the \emph{Scott module} of $kG$ with respect to $H$, 
denoted  by $\Sc(G,H)$, is, up to isomorphism, the unique indecomposable direct summand of 
the trivial $kH$-module induced from $H$ to $G$ with the property that the trivial 
$kG$-module is a constituent of its head (or equivalently of its socle). Furthermore, 
given an integer $t\geq 0$ and a power $q=r^{f}$ of a prime number with $f\geq 1$ an integer, we let 
$$ \SL^t_2(q):
=\{A\in{\mathrm{GL}}_2(q)\,|\,\det(A)^{2^t}=1 \} \
 \text{ and } \ {\mathrm{SU}}^t_2(q):
=\{A\in{\mathrm{GU}}_2(q)\,|\, \det(A)^{2^t}=1 \}\,.$$
Now, in order to apply the previously developed methods, our 
first main result provides a classification 
of the finite groups $G$ with a wreathed Sylow $2$-subgroup $C_{2^n}\wr C_2$ ($n\geq 2$) 
modulo $O_{2'}(G)$, which is of independent interest.

\begin{Theorem}\label{thm:classification_wr} 
Let $G$ be a finite group with a Sylow $2$-subgroup
isomorphic to a wreathed $2$-group $C_{2^n}\wr C_2$ for an integer $n\geq 2$ 
such that $O_{2'}(G) = 1$. 
Then one of the following \smallskip holds: 
\begin{enumerate}[leftmargin=17mm]
\item[{\rm\sf(WR1)}] $G\cong
C_{2^n}\wr C_2$\,,
\item[{\rm\sf (WR2)}] $G
\cong (C_{2^n}\times C_{2^n})\rtimes \mathfrak S_3$\,,
\item[{\rm\sf(WR3)}]
$G\cong{\mathrm{SL}}{\color{black}^n_2}(q) \rtimes C_d$
where $q=r^{f}$ is a power of a prime number  $r$ with $f$ a positive integer, $(q-1)_2=2^n$ and $d$ is an odd integer such that  $d\mid f$\,;
\item[{\rm\sf(WR4)}]
$G\cong{\mathrm{SU}}{\color{black}^n_2}(q) \rtimes C_d$ where $q=r^{f}$ is a power of a prime number  $r$ with $f$ a positive integer,  $(q+1)_2=2^n$ and $d$ is an odd integer such that  $d\mid f$\,;
\item[{\rm\sf(WR5)}] $G\cong\PSL_3 (q).H$ where $q=r^{f}$ is a power of a prime number  $r$ with $f$ a positive integer,  $(q-1)_2=2^n$, 
$H \leq C_{(q-1, 3)}\times C_d$ and $d$ is an odd integer such that $d\mid f$\,; or
\item[{\rm\sf(WR6)}]
$G\cong\PSU_3(q).H$ where where $q=r^{f}$ is a power of a prime number  $r$ with $f$ a positive integer,  $(q+1)_2=2^n$, $H \leq
C_{(q+1, 3)}\times C_d$  and $d$ is an odd integer such that $d\mid f$\,.
\end{enumerate}
\end{Theorem}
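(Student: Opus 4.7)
The plan is to exploit the two key structural properties of the wreathed $2$-group $P:=C_{2^n}\wr C_2$ stated in the introduction: its $2$-rank equals $2$ and its automorphism group is a $2$-group. Combined with the Alperin--Brauer--Gorenstein classification of finite simple groups with wreathed Sylow $2$-subgroups and the classification of finite non-abelian simple groups of $2$-rank at most $2$, these properties should pin down the short list above. The first step is to analyse the generalised Fitting subgroup $F^*(G)=F(G)E(G)=O_2(G)E(G)$ (using $O_{2'}(G)=1$), distinguishing the $2$-constrained regime from the regime where $G$ admits a non-trivial component.

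In the $2$-constrained case ($E(G)=1$) we have $C_G(O_2(G))\leq O_2(G)$, so $G/O_2(G)$ embeds into $\Out(O_2(G))$. If $O_2(G)=P$, then since $\Aut(P)$ is a $2$-group and $[G:P]$ is odd, we conclude $G=P$, yielding (WR1). If $O_2(G)\lneq P$, then $O_2(G)$ must contain the characteristic base subgroup $A\cong C_{2^n}\times C_{2^n}$ of $P$, and analysing the faithful action of $G/C_G(A)$ on $A$ via $\Aut(A)\cong\GL_2(\Z/2^n\Z)$---whose odd part is generated by an element of order $3$---together with the fact that $P/A\cong C_2$ acts as the swap involution, forces $G\cong (C_{2^n}\times C_{2^n})\rtimes\mathfrak S_3$, giving (WR2).

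When $E(G)\neq 1$, pick a component $L$ of $G$. Since the $2$-rank of $L$ is bounded by that of $P$, namely $2$, the simple group $\bar L:=L/Z(L)$ has $2$-rank at most $2$. Invoking the Brauer--Suzuki theorem (which excludes non-abelian simple groups with quaternion Sylow $2$-subgroup) together with the Gorenstein--Walter and Alperin--Brauer--Gorenstein classifications, the possibilities for $\bar L$ reduce to $\PSL_2(q)$, $\PSL_3(q)$ and $\PSU_3(q)$, plus a short list of sporadic candidates one rules out by direct inspection of their Sylow $2$-subgroups; a $2$-rank count also forces $L$ to be the unique component. Matching the Sylow $2$-subgroup of $L$ against $P$: if $L$ already contains a Sylow $2$-subgroup of $G$, then $\bar L\cong\PSL_3(q)$ with $(q-1)_2=2^n$ or $\bar L\cong\PSU_3(q)$ with $(q+1)_2=2^n$, producing the cores of (WR5) and (WR6); otherwise the Sylow $2$-subgroup of $L$ has index $2^n$ in $P$, and accounting for the diagonal automorphisms that enlarge $\SL_2(q)$ (respectively $\SU_2(q)$) to $\SL_2^n(q)$ (respectively $\SU_2^n(q)$) yields the cores of (WR3) and (WR4).

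Finally, one determines the admissible extensions of these cores by outer automorphisms. Using the standard description of $\Out(\bar L)$ for the relevant finite groups of Lie type as a product of diagonal, field and graph automorphism subgroups, a direct computation on the natural Sylow $2$-subgroup (via root subgroups or the Bruhat decomposition) shows that the graph automorphism, the even-order field automorphisms, and the non-allowed diagonal factors all enlarge or deform the Sylow $2$-subgroup, whereas odd-order field automorphisms $C_d$ with $d\mid f$ and the diagonal factors $C_{(q\mp 1,3)}$ survive, producing precisely the full lists in (WR3)--(WR6). The main obstacle will be this last case-by-case analysis for $\PSL_3(q)$ and $\PSU_3(q)$: one must verify explicitly which combinations of diagonal, field and graph automorphisms preserve the wreathed isomorphism type of the Sylow $2$-subgroup. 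A subtler secondary point is ruling out, in the $2$-constrained regime, intermediate normal configurations $A\lneq O_2(G)\lneq P$, which one handles by noting that any such $O_2(G)$ is characteristic in $P$ and fails to admit a non-trivial odd-order action compatible with the action of $P/A$.
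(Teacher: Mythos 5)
Your overall strategy---analyse $F^*(G)$, exploit that $\Aut(P)$ is a $2$-group and that $P$ has $2$-rank $2$, and feed the component case into the classification of simple groups of $2$-rank $2$---is viable and rests on the same deep input as the paper (the main result of \cite{ABG73}). But the paper does not carry out this analysis by hand: it invokes the trichotomy of \cite[Definition~2.1]{ABG70} ($G$ is $2$-nilpotent, a $D$-group, a $Q$-group or a $QD$-group) and then quotes \cite[Theorem~1]{Bra64} for the $D$-case, Propositions~3.2 and~3.3 of \cite{ABG70} for the $Q$-case, and \cite[Proposition~2.2]{ABG70} together with \cite{ABG73} for the $QD$-case. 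Measured against that, your proposal has genuine gaps precisely at the points where those citations do the work.

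The most serious gap is in the component case leading to {\sf(WR3)}/{\sf(WR4)}. From ``$L$ is the unique component and $L/Z(L)\cong\PSL_2(q)$'' you jump to $G\cong\SL_2^n(q)\rtimes C_d$ by ``accounting for the diagonal automorphisms''. This skips: (i) why $L\cong\SL_2(q)$ rather than $\PSL_2(q)$, i.e.\ why a dihedral $P\cap L$ is impossible; (ii) why $C_G(L)=Z(L)$, equivalently why $O_2(G)\le L$, so that $G/L$ really embeds into $\Out(L)$; (iii) why the preimage in $G$ of the diagonal part of $\Out(L)$ is exactly $\SL_2^n(q)$ and why the field part splits off as a cyclic complement $C_d$; and (iv) why $|P:P\cap L|\in\{1,2^n\}$ are the only options---for instance $|P:P\cap L|=2$ with $P\cap L\cong C_{2^n}\times C_{2^n}$ abelian must be excluded. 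This is the content of \cite[Propositions~3.2, 3.3]{ABG70} and is far from routine. A second gap lies in the $2$-constrained case: the assertion that $O_2(G)\lneq P$ forces $O_2(G)\supseteq A:=C_{2^n}\times C_{2^n}$ is unproved (it needs $C_P(O_2(G))\le O_2(G)$ plus a determination of the self-centralising normal subgroups of $P$), you do not rule out $G/A\cong C_6$, and you do not explain why the extension $A.\mathfrak S_3$ splits (Gasch\"utz, using that $A$ is complemented in $P$), which is needed to land exactly on {\sf(WR2)}; the paper instead quotes Brauer's theorem for the index-$2$ subgroup. Finally, you locate ``the main obstacle'' in the wrong place: for {\sf(WR5)}/{\sf(WR6)} no computation with graph and field automorphisms on root subgroups is needed, because $L\supseteq P$ forces $C_G(L)\le O_{2'}(G)=1$ and hence $G/L\hookrightarrow\Out(L)$ of \emph{odd} order, which excludes the graph automorphism and the even field automorphisms outright---this parity observation is exactly how the paper concludes.
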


This theorem, which we prove in  Section~\ref{sec:wreathedSylow},
 is a byproduct of  Alperin--Brauer--Gorenstein's work \cite{ABG70} on finite groups with 
quasi-dihedral and wreathed Sylow $2$-subgroups.
\par 
Our second main result is then a classification of principal blocks with  defect groups isomorphic to 
a wreathed $2$-group $C_{2^n}\wr C_2$ with $n\geq 2$.

\begin{Theorem}\label{thm:MainResult}%
Let $k$ be an algebraically closed field of characteristic~$2$ and 
let $G$ be a finite group with a Sylow $2$-subgroup $P$ 
isomorphic to a wreathed $2$-group $C_{2^n}\wr C_2$ for a fixed integer $n\geq 2$. 
Then, the following assertions hold.
\begin{enumerate}[label={\rm(\alph*)}]
\item 
The principal $2$-block $B_0(kG)$ of $G$ is splendidly Morita equivalent to the
principal $2$-block $B_0(kG')$ of a finite group $G'$ belonging to precisely 
one of the following families of  finite groups:
\begin{itemize}[leftmargin=18mm]
\item[{\rm\sf (W1(n))}] $C_{2^n}\wr C_2$\,; 
\item[{\rm\sf (W2(n))}] $
(C_{2^n}\times C_{2^n})\rtimes \mathfrak S_3$\,;  
\item[{\rm\sf(W3(n))}]
${\mathrm{SL}}{\color{black}^n_2}(q)$ where  $q$ is a power of a prime number
such that  $(q-1)_2=2^n$;
\item[{\rm\sf (W4(n))}] ${\mathrm{SU}}{\color{black}^n_2}(q)$  where  
$q$ is a power of a prime number such that $(q+1)_2=2^n$; 
\item[{\rm\sf (W5(n))}] 
$\PSL_3(q)$   where  $q$ is a power of a prime number such that $(q-1)_2=2^n$; or
\item[{\rm\sf (W6(n))}] $\PSU_3(q)$  where  $q$ is a power of a prime number
such that $(q+1)_2=2^n$. 
\end{itemize}
Moreover, in all cases, the splendid Morita equivalence is induced by the
Scott module ${{\mathrm{Sc}}(G\times G', \Delta P)}$, where $P$ is also seen as a Sylow $2$-subgroup of~$G'$.
\item 
In {\rm(a)}, more accurately, if $G_1$ and $G_2$ are two finite groups belonging 
to the same infinite family of finite groups {\sf(Wj(n))} with ${\sf j}\in \{{\sf 3},{\sf 4},{\sf 5},{\sf 6}\}$,
then ${{\mathrm{Sc}}(G_{1}\times G_2, \Delta P)}$ induces a splendid Morita equivalence  between 
$B_0(kG_1)$ and $B_0(kG_2)$.
\end{enumerate} 
\end{Theorem}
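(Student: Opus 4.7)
The plan is to prove part~(a) by first reducing to the case $O_{2'}(G)=1$, invoking Theorem~\ref{thm:classification_wr} to produce a short list of possibilities for~$G$, and then, for each entry of that list, exhibiting a group $G'$ from the families {\sf(W1(n))}--{\sf(W6(n))} for which the Scott bimodule $\Sc(G\times G',\Delta P)$ induces a splendid Morita equivalence between the principal $2$-blocks. For part~(b), within each infinite family I would carry out an analogous Scott-module construction. The central technical input throughout is the Brauer indecomposability result of Koshitani--Tuvay~\cite{KT20}.

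\emph{Reduction and case distinction.} The quotient $\bar G := G/O_{2'}(G)$ still has Sylow $2$-subgroup $P$, and standard arguments (used in the same form in \cite{KL20a,KL20b,KLS20}) show that inflation induces a splendid Morita equivalence $B_0(kG)\simeq B_0(k\bar G)$, realised by $\Sc(G\times\bar G,\Delta P)$; so I may assume $O_{2'}(G)=1$. Theorem~\ref{thm:classification_wr} then places $G$ in exactly one of {\sf(WR1)}--{\sf(WR6)}, and a natural representative $G'$ in the corresponding family {\sf(W$j$(n))} presents itself: for {\sf(WR1)} and {\sf(WR2)} I take $G'=G$, while for the remaining four cases I take the canonical normal subgroup of odd index $d$ --- namely $\SL_2^n(q)$, $\SU_2^n(q)$, $\PSL_3(q)$, or $\PSU_3(q)$ respectively --- which still contains $P$ as a Sylow $2$-subgroup.

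\emph{Scott-module criterion.} The engine in all remaining cases is the strategy deployed in \cite{KL20a,KL20b,KLS20}: to establish that $M:=\Sc(H_1\times H_2,\Delta P)$ induces a splendid Morita equivalence between $B_0(kH_1)$ and $B_0(kH_2)$, one verifies (i)~that $M$ has vertex $\Delta P$, (ii)~that $M$ is Brauer indecomposable, (iii)~that the fusion systems $\mathcal F_P(H_1)$ and $\mathcal F_P(H_2)$ coincide, and (iv)~that the two principal blocks have matching numerical invariants (e.g.\ equal numbers of simple modules). Conditions (i) and (iii) I would read off from the explicit local analysis of Alperin--Brauer--Gorenstein~\cite{ABG70}: in each of the families {\sf(W1(n))}--{\sf(W6(n))} the fusion system on $P$ depends only on $n$ and on the family, not on the underlying parameter $q$, which simultaneously certifies fusion agreement in part~(a) (between $G$ and $G'$) and in part~(b) (between any two representatives of a single family). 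Condition (ii) is precisely the conclusion of \cite{KT20} applied to the wreathed vertex $P$ under the fusion-theoretic hypotheses realised here.

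\emph{Main obstacle.} The genuinely new difficulty, compared with the tame settings of \cite{KL20a,KL20b,KLS20}, lies in verifying (iv): one must upgrade the stable equivalence produced by (i)--(iii) to an actual Morita equivalence without being able to invoke a classification of indecomposable modules, which is unavailable in wild representation type. I would handle this case by case, computing $l(B_0)$, the Cartan invariants, and the generalised decomposition numbers of each chosen $G'$ directly from the (generic) character tables of $\SL_2^n(q)$, $\SU_2^n(q)$, $\PSL_3(q)$ and $\PSU_3(q)$, and matching them against the invariants predicted by the local structure on~$P$. The same computation would also deliver the \emph{distinctness} of the six splendid classes asserted in part~(a), since $l(B_0)$, the fusion system on $P$, and the shape of the generalised decomposition matrix are splendid-Morita invariants and take noticeably different values across the six families.
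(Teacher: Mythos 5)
Your reduction to $O_{2'}(G)=1$, the appeal to Theorem~\ref{thm:classification_wr}, the choice of representatives $G'$, and the use of the gluing strategy with the Brauer indecomposability result of \cite{KT20} all match the paper's architecture. But your plan for the decisive step contains a genuine gap. Conditions (i)--(iii) of your criterion (together with Morita equivalences between the principal blocks of centralisers of involutions, which you do not mention explicitly but which the paper obtains from the dihedral classification of \cite{KL20a} applied to $C_{G_i}(z)\cong\GL_2(q_i)$ resp.\ $\GU_2(q_i)$) only yield a \emph{stable equivalence of Morita type}. Your proposed condition (iv) --- matching $\ell(B_0)$, Cartan invariants and generalised decomposition numbers --- does not upgrade a stable equivalence to a Morita equivalence: there is no theorem to the effect that a stable equivalence of Morita type between blocks with identical numerical invariants is a Morita equivalence, and in wild type one cannot fall back on a classification of indecomposables. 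The only available criterion (Linckelmann, \cite[Theorem~2.1]{Lin96a}, condition (II) of Theorem~\ref{Thm:ProveExSMEQ}) is that the bimodule sends every simple module to a simple module, and verifying this is precisely the hard new content of the paper.

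Concretely, the paper verifies condition (II) for $\PSL_3(q)$ and $\PSU_3(q)$ by tracking explicit indecomposable modules through the functor $-\otimes_{B_1}M$: trivial source modules with prescribed vertices (which are permuted by the equivalence, by \cite[Lemma~3.4]{KL20a}), the uniserial permutation module $k_{\mathbb B}{\uparrow}^{\mathsf G}$ on a Borel subgroup, and the Heller translate $\Omega^2(k_{\mathsf G})$, whose Loewy structure is pinned down via decomposition matrices, $\Ext^1$-computations and an Auslander--Reiten sequence argument (Lemmas~\ref{lem:tsPSL_{3}(q)} and \ref{OkyWaki}); the simple modules $\varphi$ and $\theta$ are then isolated by repeated use of the stripping-off method (Lemma~\ref{StrippingOffMethod}). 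None of this is replaceable by a comparison of character-table data. Your distinctness argument is also incomplete at one point: the families {\sf(W5(n))} and {\sf(W6(n))} have the same $\ell(B)$, $k(B)$ and fusion system, and the paper separates them \emph{splendidly} by the fact that $B_0(k\PSL_3(q))$ has a nontrivial simple trivial source module while $B_0(k\PSU_3(q))$ does not (Lemma~\ref{lem:tsPSL_{3}(q)}(b) versus Corollary~\ref{cor:simplesPSU(q)}) --- an invariant finer than the decomposition matrix shape you propose, although the decomposition matrices do suffice for the coarser Morita distinction in Theorem~\ref{thm:MoritaClasses}.
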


We emphasize that in the case of principal blocks of tame representation type, treated 
in~\cite{KL20a,KL20b,KLS20}, a classification of these blocks up to Morita equivalence 
was known by Erdmann's work on tame algebras~\cite{Erd90}. 
A major difference in the case of wreathed Sylow $2$-subgroups lies in the fact that
a classification of these blocks up to Morita equivalence was, to our knowledge, not known.
However, it follows from our methods, that the classification up to splendid Morita equivalence 
which we have obtained coincides with the classification up to Morita equivalence. 

\begin{Theorem}\label{thm:MoritaClasses}%
Let $k$ be an algebraically closed field of characteristic~$2$ and 
let $G$ be a finite group with a Sylow $2$-subgroup  
isomorphic to a wreathed $2$-group $C_{2^n}\wr C_2$ for a fixed integer $n\geq 2$. 
Then $B_{0}(kG)$ is Morita equivalent to the principal block of precisely one of the families of groups 
{\rm\sf (W1(n))}, {\rm\sf (W2(n))}, {\rm\sf (W3(n))}, {\rm\sf (W4(n))}, {\rm\sf (W5(n))}, or {\rm\sf (W6(n))}
as in  Theorem~\ref{thm:MainResult}(a).
\end{Theorem}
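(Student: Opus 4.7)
Theorem~\ref{thm:MainResult}(a) already exhibits a splendid Morita equivalence, and hence in particular a Morita equivalence, between $B_0(kG)$ and the principal $2$-block of a representative of exactly one of the six families {\sf(W1(n))}--{\sf(W6(n))}. The content of Theorem~\ref{thm:MoritaClasses} therefore reduces to the converse separation statement: principal blocks drawn from two distinct families are never Morita equivalent.

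My plan is to distinguish the six families pairwise by invoking standard Morita invariants of $B_0$. As a first pass I would tabulate, for one representative per family, the pair $(k(B_0),l(B_0))$ of numbers of ordinary and modular irreducible characters in $B_0$, together with the Cartan matrix up to permutation conjugation. For {\sf(W1)} the block is nilpotent, so by Puig's nilpotent block theorem it is Morita equivalent to $k[C_{2^n}\wr C_2]$, and in particular $l(B_0)=1$. For {\sf(W2)} the inertial quotient is $\mathfrak S_3$, which forces $l(B_0)=3$ and pins down $k(B_0)$ via the $\mathfrak S_3$-orbits on $C_{2^n}\times C_{2^n}$. For the remaining four families, the invariants are read off from the generic character tables and the decomposition matrices in non-defining characteristic of $\SL_2^n(q)$, $\SU_2^n(q)$, $\PSL_3(q)$ and $\PSU_3(q)$ as computed, among others, by Burkhardt, Okuyama--Waki, Geck and the first author.

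Whenever $(k(B_0),l(B_0))$ alone fails to separate two candidate families, the plan is to descend to finer invariants: the decomposition matrix up to permutations of rows and columns, the elementary divisors of the Cartan matrix, and as a last resort the multiset of dimensions of the simple $B_0$-modules, all of which are genuine Morita invariants. Combined with Theorem~\ref{thm:MainResult}(a), this assignment of a Morita-invariant signature to each family will force the target family to be unique.

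The main obstacle will be the linear-versus-unitary pairs {\sf(W3)}/{\sf(W4)} and {\sf(W5)}/{\sf(W6)}, since in each pair the two candidate groups share the defect group $C_{2^n}\wr C_2$, share the fusion system and inertial quotient, and have nearly identical character-theoretic data in $B_0$. This is the step where the proof goes beyond the splendid Morita classification already achieved in Theorem~\ref{thm:MainResult}: to separate, say, $B_0(k\,\PSL_3(q))$ from $B_0(k\,\PSU_3(q))$ with $(q-1)_2 = 2^n = (q'+1)_2$, I expect to need the precise shapes of their decomposition matrices, and to exhibit at least one Morita invariant—an elementary divisor of the Cartan matrix or an Ext-quiver feature—on which the linear and the unitary case genuinely differ.
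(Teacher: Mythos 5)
Your overall architecture agrees with the paper's: Theorem~\ref{thm:MainResult}(a) supplies the existence part, and the substance is to show that no two of the six families merge into one Morita class, which is done by comparing Morita invariants, the only delicate pairs being {\sf(W3(n))}/{\sf(W4(n))} and {\sf(W5(n))}/{\sf(W6(n))}. One factual slip first: for {\sf(W2(n))} the correct value is $\ell(B_0)=2$, not $3$ (Theorem~\ref{thm:k(B)l(B)}). The inertial quotient $N_G(P)/PC_G(P)$ of the principal block is \emph{trivial} here, since $\Aut(P)$ is a $2$-group (Lemma~\ref{lem:CGp5956}); the $\mathfrak S_3$ acts on the base subgroup $C_{2^n}\times C_{2^n}$, not on $P$, and what one actually counts is the number of $2$-regular classes of $\mathfrak S_3$. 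The pairs $(k(B_0),\ell(B_0))$, correctly computed, still isolate {\sf(W1(n))}, {\sf(W2(n))}, $\{${\sf(W3(n))},{\sf(W4(n))}$\}$ and $\{${\sf(W5(n))},{\sf(W6(n))}$\}$, so your first pass survives, but both the stated mechanism and the stated value are wrong.

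The genuine gap is that the two separations carrying the real content of the theorem are only announced (``I expect to need\ldots''), not performed. For {\sf(W5(n))} versus {\sf(W6(n))} your plan is exactly the paper's proof — one must actually exhibit the decomposition matrices of $B_0(k\PSL_3(q_1))$ (from \cite[Proposition~6.12]{Sch15}) and of $B_0(k\PSU_3(q_2))$ (Lemma~\ref{lem:2decPSU3}) and check they differ — but the check is not done in your proposal. For {\sf(W3(n))} versus {\sf(W4(n))} the paper does \emph{not} compare Cartan or decomposition data of the wild wreathed-defect blocks at all: it observes that a Morita equivalence between $B_0(k\SL_2^n(q_1))$ and $B_0(k\SU_2^n(q_2))$ would, via the same reductions used in the proof of Theorem~\ref{thm:MainResult} (Alperin--Dade up to $\GL_2/\GU_2$, then central quotients down to $\PGL_2\cong\PGU_2$), force $B_0(k\PGL_2(q_1))$ and $B_0(k\PGL_2(q_2))$ with $(q_1-1)_2=2^n=(q_2+1)_2$ to be Morita equivalent, contradicting Erdmann's classification of tame blocks \cite{Erd90}. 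This reduction to the dihedral, tame case is the key idea your proposal lacks; without it you would have to verify by hand from \cite{Kue80} that some Morita invariant of the two wreathed-defect blocks genuinely differs, and no such invariant is exhibited.
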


As an immediate consequence of Theorem~\ref{thm:MainResult} we also obtain that 
Puig's Finiteness Conjecture holds if we restrict our attention to principal blocks with a 
defect group isomorphic to a wreathed $2$-group $C_{2^n}\wr C_2$. 

\begin{Corollary}\label{cor:MainResult} 
For each integer $n\geq 2$ there are only finitely many 
splendid Morita equivalence classes of  
principal $2$-blocks with defect groups isomorphic to 
a wreathed $2$-group $C_{2^n}\wr C_2$. 
\end{Corollary}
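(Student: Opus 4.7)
The plan is to deduce the corollary as a formal consequence of Theorem~\ref{thm:MainResult}. Fix an integer $n\geq 2$ and let $G$ be an arbitrary finite group whose Sylow $2$-subgroup is isomorphic to $C_{2^n}\wr C_2$. By part~(a) of Theorem~\ref{thm:MainResult}, the principal $2$-block $B_0(kG)$ is splendidly Morita equivalent to $B_0(kG')$ for some representative $G'$ drawn from precisely one of the six families {\sf(W1(n))}, {\sf(W2(n))}, {\sf(W3(n))}, {\sf(W4(n))}, {\sf(W5(n))}, {\sf(W6(n))}. Since splendid Morita equivalence is indeed an equivalence relation on block algebras, the set of splendid Morita equivalence classes of principal $2$-blocks with defect group $C_{2^n}\wr C_2$ is therefore covered by the classes indexed by these six families.

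It then remains to show that each of the six families contributes only finitely many classes. For the fixed $n$, the families {\sf(W1(n))} and {\sf(W2(n))} each consist of a single finite group, so each contributes exactly one splendid Morita equivalence class. For the remaining four families {\sf(W3(n))}--{\sf(W6(n))}, which a priori contain infinitely many groups parameterised by a prime power $q$ subject to a $2$-adic condition on $q\pm 1$, I invoke part~(b) of Theorem~\ref{thm:MainResult}: for any two members $G_1,G_2$ of the same such family, the Scott module $\Sc(G_1\times G_2,\Delta P)$ induces a splendid Morita equivalence between $B_0(kG_1)$ and $B_0(kG_2)$. Consequently, each of these four families also contributes exactly one splendid Morita equivalence class.

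Putting the two observations together bounds the total number of splendid Morita equivalence classes of principal $2$-blocks with defect group $C_{2^n}\wr C_2$ above by $6$, which is in particular finite, as required. The argument has no real obstacle, since the corollary is a purely formal consequence of Theorem~\ref{thm:MainResult}: all the substantive work resides in the proof of that theorem, after which the finiteness claim follows in a single short step.
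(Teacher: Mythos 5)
Your argument is correct and is precisely the paper's (implicit) reasoning: the corollary is stated as an immediate consequence of Theorem~\ref{thm:MainResult}, with parts~(a) and~(b) combining exactly as you describe to bound the number of splendid Morita equivalence classes by six. Nothing further is needed.
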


This paper is organised as follows. In Section~\ref{sec:nota} 
the notation is introduced. In Section~\ref{sec:wreathedSylow} 
we state and prove the classification of finite groups $G$ with a wreathed Sylow $2$-subgroup 
and $O_{2'}(G)=1$. In Section~\ref{sec:pre} 
we recall, state and prove preliminary results on 
splendid Morita equivalences and on module theory over finite-dimensional algebras.
In Sections~\ref{sec:W3W4}, \ref{sec:W5} and \ref{sec:W6} we prove part (b) 
of Theorem~\ref{thm:MainResult}.  Section~\ref{sec:proofMainb} contains the proof of Theorem~\ref{thm:MainResult} 
and Theorem~\ref{thm:MoritaClasses}. Finally, Appendix~\ref{App:A} fixes a gap in the proof of~\cite[Proposition~3.3(b)]{KL20b}.

\section{Notation}\label{sec:nota}
%%%\noindent ///////////////////////////////////\\
%%%{[}ADAPT Notation here at the end of the writing process...{]}\\
%%%///////////////////////////////////\\
Throughout this paper, unless otherwise stated, we adopt the following notation and 
conventions.  We let $k$ be an algebraically closed field of characteristic~{\color{black}$p>0$}. 
All groups considered are finite,  all $k$-algebras are finite-dimensional 
and all modules over finite-dimensional algebras considered are finitely generated right modules.
The symbols $G$, $G'$, $G_{1}$, $G_{2}$,  $\mathcal G_{1}$ and  $\mathcal G_{2}$ 
always denote finite groups of order divisible by $p$.
\par  
Furthermore, we denote by ${\mathrm{Syl}}_p(G)$ the set of all Sylow $p$-subgroups of $G$, and 
for $P\in\Syl_p(G)$, we let $\mathcal F_P(G)$ be the fusion system of $G$ on $P$.
If $H\leq G$, we let $\Delta H:=\{(h, h)\in G\times G\,|\, h\in H\}$ denote the diagonal 
embedding of $H$ in $G\times G$. 
Given an integer $m\geq 2$, we let $D_{2^m}$ denote the dihedral group of order $2^m$,  
$C_m$ denote the cyclic group of order $m$, and  $C_{2^m}\wr C_2$ denote the 
wreathed product of $C_{2^m}$ by $C_2$.
Given an integer $t\geq 0$ and a positive prime power~$q$, we let 
$${\mathrm{SL}}^t_2(q):=\{A\in{\mathrm{GL}}_2(q)\,|\,
\det(A)^{2^t}=1 \} \ \text{ and } \ {\mathrm{SU}}^t_2(q)
:=\{A\in{\mathrm{GU}}_2(q)\,|\, \det(A)^{2^t}=1 \}\,,$$
as already defined in the introduction.
\par
Given  a finite-dimensional $k$-algebra $A$, we denote by $\rad(A)$ the Jacobson radical  of~$A$
and by $1_A$ the unit element of $A$, respectively.
Furthermore, if $X$ is an $A$-module and  $m\geq 0$ is an integer, then we denote by $\soc^m(X):={\{x\in X \mid x\cdot\rad(A)^m=0\}}$  
the $m$-th socle of $X$, where $\soc(X):=\soc^1(X)$ is the socle of $X$, and for $1\leq i\leq \ell$, 
where $\ell$ is the Loewy (or radical) length of $X$, we set 
$$S_i(X):=\soc^i(X)/\soc^{i-1}(X)\quad\text{ and }\quad L_i(X):= X\,{\mathrm{rad}}(A)^{i-1}/X\,{\mathrm{rad}}(A)^i$$
and we write $\text{hd}(X)$ for the head of $X$. 
We then talk about the \emph{radical (Loewy) series} and about the \emph{socle series} of $X$ as defined in 
\cite[Chap. I \S8]{Lan83}. 
%Moreover we sometimes describe modules by using diagrams, about which the readers
%can consult \cite{BC87} and \cite[pp.42--52]{Car96}.
%E.g. give an explanation of the notation such as
We describe a uniserial $A$-module $X$ with simple composition factors $L_i(X)\cong S_i$ 
for simple $A$-modules $S_1, \cdots, S_{\ell}$ via the diagram
$$X= {\boxed{ \begin{matrix}S_1\\ \vdots\\S_{\ell}\end{matrix}} \,.}$$
We denote by $P(X)$ the projective cover of an $A$-module $X$ and
by $\Omega(X)$ the kernel of the canonical morphism $P(X)\twoheadrightarrow X$. 
%In other words, $\Omega$ is the Heller operator. 
Dually, we let $\Omega^{-1}(X):=I(X)/X$ where $I(X)$ is an 
injective envelope 
of $X$,
and we denote by $X^*$ the $k$-dual of $X$ (which is a left $A$-module).
Given a simple $A$-module $S$, we denote by $c_X(S)$ 
the multiplicity of $S$ as a composition factor of $X$ and if $S_1, \cdots, S_n$ 
are all the pairwise non-isomorphic composition factors of $X$ 
with multiplicities $m_{1},\ldots,m_{n}$, respectively, then we write 
$X=m_1\times S_1+\cdots +m_n\times S_n$ (as composition factors).
If $Y$ is another $A$-module, then $Y\mid X$ (resp. $Y\nmid X$) means that $Y$ is 
isomorphic (resp. not isomorphic) to a direct summand of $X$, $({\mathrm{proj}})$ 
denotes a projective $A$-module (which we do not need to specify). 
\par
We write $B_0(kG)$ for the principal block of the group algebra $kG$. 
Given a  block $B$ of $kG$, we write $1_B$ for the block idempotent of $B$ and $C_B$ for 
the Cartan matrix of $B$.
We denote by $\Irr(B)$ and $\IBr(B)$, respectively, the sets of all
irreducible ordinary and Brauer characters of $G$ belonging to $B$.
If $D\leq G$ is a defect group of the block $B$, then the integer $d$ such that $|D|=p^{d}$ is called the defect of $B$.
Assuming $|G| = p^{a}m$ with $p\nmid m$, if $\chi\in\Irr(G)$ lies in a block of defect $d$, then 
the height of $\chi$, denoted by $\text{ht}(\chi)$, is defined to be the exact power of $p$ dividing the integer
$\chi(1)/ p^{a-d}$. 
We write $k(B):=|\Irr(B)|$ and $\ell(B):=|\IBr(B)|$ and 
$k_{i}(B):=|\{\chi\in\Irr(B)\mid \text{ht}(\chi)=i\}|$ where $\text{ht}(\chi)$ 
is the height of~$\chi$. 
\par
We denote by 
$k_G$ the trivial $kG$-module. Given a $kG$-module $M$ and 
a $p$-subgroup $Q\leq G$ we denote by 
$M(Q)$ the Brauer construction of $M$ with respect to $Q$. (See e.g. \cite[p. 219]{The95}.)
When $H\leq G$, $N$ is a $kH$-module and $M$ is a $kG$-module,
we write $N{\uparrow}^G$ and $M{\downarrow}_H$ respectively for
the induction of $N$ to~$G$ and the restriction of $M$ to $H$.
For a subgroup $H\leq G$ 
we denote by ${\mathrm{Sc}}(G,H)$ the Scott module of $kG$ with 
respect to~$H$, which by 
definition is the unique indecomposable direct summand of 
%the induced module 
${k_H}{\uparrow}^G$ (up to isomorphism) that has the trivial module $k_G$ as a constituent of its
head (or equivalently of its socle). This is a $p$-permutation module (see
%\cite[\S2]{Bro85} and  
\cite[Chapter 4, \S 8.4]{NT89}).
%Equivalently, ${\mathrm{Sc}}(G,H)$ is the relative $H$-projective cover of $k_G$ (see \cite[Proposition 3.1]{The85}). 
\par
If  $B_1$ and $B_2$ are two  finite-dimensional $k$-algebras and 
$M$ is a $(B_1,B_2)$-bimodule, we also 
write $_{B_1}\!M_{B_2}$ to emphasize the $(B_1,B_2)$-bimodule structure on $M$.
Now, if $B_1$ and $B_2$
are blocks of $kG_1$ and $kG_2$, respectively, then 
we can view every $(B_1,B_2)$-bimodule $M$ as a right $k(G_1\times G_2)$-module via the right 
$(G_1\times G_2)$-action defined 
by $m\cdot (g_1,g_2):={g_1}^{-1}mg_2$ for every $m\in M$, $g_1\in G_1$, $g_2\in G_2$. 
Furthermore, the blocks
$B_1$ and $B_2$ are called 
\emph{splendidly Morita equivalent} (or \emph{source-algebra equivalent}, or  
\emph{Puig equivalent}), if  
there is a Morita equivalence between $B_1$ and $B_2$ induced by a
$(B_1,B_2)$-bimodule $M$ which is 
is a $p$-permutation module when viewed as a right $k(G_1\times G_2)$-module. 
In this case, we write ${B_1}\sim_{SM}B_2$. 
By a result of Puig and Scott, this definition is equivalent to the condition that 
$B_1$ and $B_2$ have 
source algebras which are isomorphic as interior $P$-algebras 
(see \cite[Theorem~4.1]{Lin01}). Also, by a 
result of Puig (see \cite[Proposition~9.7.1]{Lin18}), the defect groups of 
$B_1$ and $B_2$ are isomorphic. 
Hence we may identify them.

\section{Finite groups with wreathed Sylow 2-subgroups}\label{sec:wreathedSylow} 

To begin with, we collect essential results about  finite groups 
with wreathed Sylow $2$-subgroups. 
In particular, we classify such groups modulo $O_{2'}(G)$. This classification is a byproduct of the
results of Alperin--Brauer--Gorenstein in \cite{ABG70}. 

\begin{Lemma}\label{lem:CGp5956}
Let $P:=C_{2^n}\wr C_2$ with $n\geq 2$.  Then the $2$-rank of $P$ is $2$ 
and $\Aut(P)$ is a $2$-group.
\end{Lemma}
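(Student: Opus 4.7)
Write $P = A \rtimes \langle t\rangle$ with $A = \langle a\rangle \times \langle b\rangle \cong C_{2^n}\times C_{2^n}$ and $tat^{-1}=b$, so that $t$ swaps the two cyclic factors. Set $z:=a^{2^{n-1}}b^{2^{n-1}}$, which is the unique involution in $Z(P)=\langle ab\rangle$.

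For the statement on the $2$-rank, I would first enumerate the involutions of $P$. The involutions in $A$ are the three non-trivial elements of $\Omega_1(A) = \langle a^{2^{n-1}}, b^{2^{n-1}}\rangle\cong C_2\times C_2$. Using the identity $(a^ib^jt)^2 = a^{i+j}b^{i+j}$, the involutions outside $A$ are precisely the elements $u_i:=a^ib^{-i}t$ for $0\leq i<2^n$. A direct computation shows that $u_i$ conjugates $a^kb^{\ell}\in A$ to $a^{\ell}b^k$, so it centralizes $a^kb^{\ell}$ only when $k=\ell$; thus $z$ is the unique involution of $A$ commuting with any $u_i$. Moreover a similar calculation gives $u_iu_j=a^{i-j}b^{j-i}$, so $u_i$ and $u_j$ commute if and only if $i\equiv j\pmod{2^{n-1}}$, in which case $u_iu_j\in\{1,z\}$. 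It follows that every elementary abelian $2$-subgroup of $P$ has rank at most $2$, and $\Omega_1(A)$ realizes this bound.

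For the statement that $\Aut(P)$ is a $2$-group, I would first prove that $A$ is characteristic in $P$ by showing it is the unique abelian subgroup of index~$2$: if $B\leq P$ were another such subgroup, then $B$ would contain some $a^ib^jt$, whose centralizer in $A$ is the diagonal $\langle ab\rangle$ of order $2^n$; but $|B\cap A|\geq 2^{2n-1}>2^n$ for $n\geq 2$, a contradiction. Restriction to $A$ then yields a homomorphism $\rho\colon\Aut(P)\to\Aut(A)\cong\GL_2(\Z/2^n)$, and every $\varphi\in\Aut(P)$ must satisfy $\varphi\sigma=\sigma\varphi$ on $A$, where $\sigma=\begin{pmatrix}0&1\\1&0\end{pmatrix}$ is the swap induced by conjugation by $t$ (noting that $\varphi(t)\in tA$ acts on $A$ the same way as $t$). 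A short calculation identifies $C_{\Aut(A)}(\sigma)$ with the group of invertible symmetric matrices $\begin{pmatrix}x&y\\y&x\end{pmatrix}$ over $\Z/2^n$, which have determinant $x^2-y^2$ invertible exactly when $x\not\equiv y\pmod 2$; this yields $|C_{\Aut(A)}(\sigma)|=2^{2n-1}$. Finally, each $\varphi\in\ker\rho$ is determined by $\varphi(t)=ct$ with $c\in A$ and $c\sigma(c)=1$, forcing $c\in\{a^ib^{-i}\mid 0\leq i<2^n\}$, so $|\ker\rho|\leq 2^n$. Combining, $|\Aut(P)|$ divides $2^{3n-1}$, a power of~$2$.

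No single step is truly delicate; the main obstacle is organizing the centralizer computation in $\Aut(A)$ and verifying that $A$ is characteristic, where the hypothesis $n\geq 2$ is used crucially (the index-$2$ abelian subgroup is otherwise not unique, as already $D_8=C_2\wr C_2$ shows).
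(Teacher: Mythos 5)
Your proof is correct. The paper itself does not argue this lemma at all: it simply cites Craven--Glesser \cite[p.~5956]{CG12}, where these two well-known facts about wreathed $2$-groups are recorded. So your contribution is a self-contained elementary verification rather than a genuinely competing strategy. All the computational steps check out: the identity $(a^ib^jt)^2=a^{i+j}b^{i+j}$ correctly isolates the outer involutions $u_i=a^ib^{-i}t$; the relations $u_i(a^kb^{\ell})u_i^{-1}=a^{\ell}b^k$ and $u_iu_j=a^{i-j}b^{j-i}$ give exactly the commutation constraints you state, so any elementary abelian subgroup meeting $P\setminus A$ lands in some $\langle u_i,z\rangle\cong C_2\times C_2$, while those inside $A$ land in $\Omega_1(A)$ --- hence $2$-rank $2$. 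For $\Aut(P)$, the uniqueness of the abelian index-$2$ subgroup via $|B\cap A|\geq 2^{2n-1}>2^n$ is where $n\geq 2$ enters, exactly as you note; the image of the restriction map lies in $C_{\Aut(A)}(\sigma)$, which is the group of invertible symmetric circulant matrices of order $2^{2n-1}$, and the kernel embeds into $\{c\in A: c\sigma(c)=1\}=\langle ab^{-1}\rangle$ of order $2^n$, so $\Aut(P)$ is a $2$-group (in fact of order dividing $2^{3n-1}$). One tiny presentational point: when you conclude that $\varphi(t)\in tA$ acts on $A$ as $t$ does, it is worth saying explicitly that this is because conjugation by an element of the abelian group $A$ is trivial on $A$; but this is implicit in what you wrote and does not affect correctness.
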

\begin{proof} 
See e.g.~\cite[p.\,5956]{CG12}.
\end{proof}

For the benefit of legibility we state again Theorem~\ref{thm:classification_wr} 
of the introduction, before we prove it.

\begin{Theorem}\label{thm:classification_wr_bis} 
Let $G$ be a finite group with a Sylow $2$-subgroup
isomorphic to a wreathed $2$-group $C_{2^n}\wr C_2$ for an integer $n\geq 2$ 
such that $O_{2'}(G) = 1$. 
Then one of the following \smallskip holds: 
\begin{enumerate}[leftmargin=17mm]
\item[{\rm\sf(WR1)}] $G\cong
C_{2^n}\wr C_2$\,,
\item[{\rm\sf (WR2)}] $G
\cong (C_{2^n}\times C_{2^n})\rtimes \mathfrak S_3$\,,
%({\color{blue}see \cite[p.5956]{CG12} ??????? NOT NEC??});  
%{\color{black}(note that $\mathfrak S_3$ acts by permutations on $x$, $y$ and $x^{-1}y^{-1}$
%where $x$ and $y$ are generators of $C_{2^n}\times C_{2^n}$.)}
%\footnote {
%This group is defined as follows: $G:=(\langle x \rangle \times\langle y\rangle)
%\rtimes\mathfrak S_3\cong
%(C_{2^n}\times C_{2^n})\rtimes\mathfrak S_3$ such that $\mathfrak S_3$ permutes
%the three elements $x, y,
%x^{-1}y^{-1}$ (see \cite[p.5956]{CG12}).}  
\item[{\rm\sf(WR3)}]
$G\cong{\mathrm{SL}}{\color{black}^n_2}(q) \rtimes C_d$
where $q=r^{f}$ is a power of a prime number  $r$ with $f$ a positive integer, $(q-1)_2=2^n$ and $d$ is an odd integer such that  $d\mid f$\,;
\item[{\rm\sf(WR4)}]
$G\cong{\mathrm{SU}}{\color{black}^n_2}(q) \rtimes C_d$ where $q=r^{f}$ is a power of a prime number  $r$ with $f$ a positive integer,  $(q+1)_2=2^n$ and $d$ is an odd integer such that  $d\mid f$\,;
\item[{\rm\sf(WR5)}] $G\cong\PSL_3 (q).H$ where $q=r^{f}$ is a power of a prime number  $r$ with $f$ a positive integer,  $(q-1)_2=2^n$, 
$H \leq C_{(q-1, 3)}\times C_d$ and $d$ is an odd integer such that $d\mid f$\,; or
\item[{\rm\sf(WR6)}]
$G\cong\PSU_3(q).H$ where where $q=r^{f}$ is a power of a prime number  $r$ with $f$ a positive integer,  $(q+1)_2=2^n$, $H \leq
C_{(q+1, 3)}\times C_d$  and $d$ is an odd integer such that $d\mid f$\,.
\end{enumerate}
\end{Theorem}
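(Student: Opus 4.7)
The plan is to derive the classification directly from the main result of Alperin--Brauer--Gorenstein~\cite{ABG70}, which describes, modulo $O_{2'}(G)$, the structure of every finite group with wreathed Sylow $2$-subgroups. Since we assume $O_{2'}(G)=1$, that output applies to $G$ itself. Writing $P\in\Syl_2(G)$ with $P\cong C_{2^n}\wr C_2$, I would split the argument into two cases according to whether $G$ is $2$-constrained (equivalently $E(G)=1$) or not.

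In the $2$-constrained case, $F^*(G)=O_2(G)$, so $C_G(O_2(G))\leq O_2(G)$, and hence $G/O_2(G)$ embeds faithfully in $\Out(O_2(G))$. Combining Lemma~\ref{lem:CGp5956} (that $\Aut(P)$ is a $2$-group) with the fact that $P/O_2(G)\in\Syl_2(G/O_2(G))$, one narrows the admissible $O_2(G)$ down to two possibilities. If $O_2(G)=P$, then $G/P$ embeds into the $2$-group $\Out(P)$, but also has order coprime to $2$, hence $G=P$, giving case~\textsf{(WR1)}. Otherwise $O_2(G)$ is the base subgroup $B\cong C_{2^n}\times C_{2^n}$ of $P$, and $G/B$ has a Sylow $2$-subgroup of order $2$ and embeds into $\GL_2(\Z/2^n\Z)$; the only such subgroup compatible with $O_{2'}(G)=1$ and the wreathed structure is $\mathfrak S_3$ (acting via the natural embedding swapping the two cyclic factors together with the order-three rotation), yielding $G\cong B\rtimes\mathfrak S_3$, i.e.\ case~\textsf{(WR2)}.

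In the non-$2$-constrained case $E(G)\neq 1$, the $2$-rank-two restriction on $P$ (Lemma~\ref{lem:CGp5956}) together with~\cite{ABG70} forces $G$ to have a unique component $L$, with $L/Z(L)$ one of $\PSL_2(q)$, $\PSL_3(q)$ or $\PSU_3(q)$ for suitable $q=r^f$ satisfying $(q-1)_2=2^n$ or $(q+1)_2=2^n$. I would then analyse the extension $L\trianglelefteq G$ via $\Out(L)$. In the $\PSL_2(q)$ subcase, the $2$-part of the diagonal outer automorphism group enlarges $\SL_2(q)$ precisely to $\SL_2^n(q)$ (respectively $\SU_2(q)$ to $\SU_2^n(q)$), which is exactly what upgrades the generalised quaternion Sylow $2$-subgroup of $\SL_2(q)$ to the wreathed Sylow $2$-subgroup; the only further extensions compatible with $P$ remaining wreathed and $O_{2'}(G)=1$ are by odd-order cyclic field automorphisms $C_d$ with $d\mid f$, yielding cases~\textsf{(WR3)} and~\textsf{(WR4)}. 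In the $\PSL_3(q)$ and $\PSU_3(q)$ subcases, the wreathed Sylow structure is already present in $L$; excluding graph and graph-field automorphisms (whose contribution would destroy the wreathed type of $P$) then pins $G/L$ down to a subgroup of $C_{(q\pm 1,3)}\times C_d$ with $d$ odd and $d\mid f$, yielding cases~\textsf{(WR5)} and~\textsf{(WR6)}.

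The main obstacle will be this last, non-$2$-constrained case: one must carefully track, for each type of outer automorphism (diagonal, field, graph, graph-field) of $\PSL_2(q)$, $\PSL_3(q)$ and $\PSU_3(q)$, its precise contribution to the Sylow $2$-subgroup of $G$, and verify that the conjunction of $P$ being wreathed and $O_{2'}(G)=1$ leaves exactly the extensions listed in~\textsf{(WR3)}--\textsf{(WR6)}. Recognising $\SL_2^n(q)$ and $\SU_2^n(q)$ as the correct intermediate groups is the key subtlety, since without the diagonal contribution the Sylow $2$-subgroup would only be quaternion rather than wreathed.
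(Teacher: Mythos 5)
Your route is genuinely different from the paper's: the paper does not perform a $F^*(G)$-analysis at all, but instead observes that $G$ is either $2$-nilpotent (whence {\sf(WR1)}, using $O_{2'}(G)=1$) or else is a $D$-, $Q$- or $QD$-group in the sense of \cite[Definition~2.1]{ABG70}, and then quotes the corresponding structure results: for $D$-groups the index-$2$ normal subgroup with homocyclic abelian Sylow $2$-subgroup is identified as $(C_{2^n}\times C_{2^n})\rtimes C_3$ via Brauer \cite[Theorem~1]{Bra64}; for $Q$-groups cases {\sf(WR3)}/{\sf(WR4)} are read off from \cite[Propositions~3.2 and 3.3]{ABG70}; for $QD$-groups one uses \cite[Proposition~2.2]{ABG70}, the simplicity of $O^{2'}(G)$, the list in \cite{ABG73}, and the parity of $|G/N|$ to exclude graph automorphisms. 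Your decomposition into the $2$-constrained and non-$2$-constrained cases is a legitimate reorganisation of the same material, and where you defer to \cite{ABG70}/\cite{ABG73} the citations are the right ones.

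However, there are genuine gaps where your sketch replaces the paper's citations by assertions. The most serious one is exactly the point you yourself flag as ``the key subtlety'': in the component case with $L\cong\SL_2(q)$, an analysis of the extension $L\trianglelefteq G$ via $\Out(L)$ cannot determine the isomorphism type of $G$, because $C_G(L)$ is \emph{not} $Z(L)\cong C_2$ but a cyclic group of order $2^n$ (in $\SL_2^n(q)$ it is the group of scalar matrices $\lambda I$ with $\lambda^{2^n}=1$), and $LC_G(L)$ has index $2$ in $\SL_2^n(q)$. Thus $G$ is a nonsplit gluing of $L$, a large cyclic centraliser, and the diagonal automorphism, and recovering the precise isomorphism type $\SL_2^n(q)\rtimes C_d$ requires either reproving or citing \cite[Propositions~3.2 and 3.3]{ABG70}; your proposal does neither. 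Secondary gaps: (i) in the $2$-constrained case you assert without argument that $O_2(G)\in\{P, C_{2^n}\times C_{2^n}\}$ — other normal subgroups of $P$ (e.g.\ the two non-abelian maximal subgroups, or $Z(P)$) must be excluded using $C_G(O_2(G))\le O_2(G)$ — and the identification of $G/B$ with $\mathfrak S_3$ needs the computation that the odd part of $|\GL_2(\mathbb Z/2^n\mathbb Z)|$ is $3$ together with an argument ruling out $C_6$, or else Brauer's theorem as in the paper; (ii) the uniqueness of the component and the exclusion of the remaining simple groups of $2$-rank two ($A_7$, $M_{11}$, $\PSU_3(4)$, and $\PSL_2(q)$ as a component in its own right with dihedral Sylow $2$-subgroups) are asserted rather than derived. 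None of these gaps is fatal to the strategy, but each must be filled, and the economical way to fill them is precisely the set of references the paper uses.
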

\begin{proof}
If $G$ is $2$-nilpotent, then Case~{\rm\sf (WR1)} holds since $O_{2'}(G) = 1$. 
In all other cases, $G$ is a $D$-group, a $Q$-group or a $QD$-group with the notation of 
\cite[Definition~2.1]{ABG70}.
Let $G$ be a $D$-group. Then there exists $K\unlhd G$ of index $2$ such that 
$P\cap K\cong C_{2^n}\times C_{2^n}$. By \cite[Theorem~1]{Bra64}, 
$K\cong (C_{2^n}\times C_{2^n})\rtimes C_3$ and Case~{\rm\sf (WR2)} holds.
%(the proof of \cite[Proposition~3.4]{ABG} is somewhat strange: they first state $|H:U|=6$, but later $H=U\rtimes\N_H(P)$ with $\N_H(P)\cong\PGL(2,3)\cong S_4$…).

If $G$ is a $Q$-group, then Case {\rm\sf (WR3)} or {\rm\sf (WR4)} 
occurs by Propositions 3.2 and 3.3 of \cite{ABG70}.
Finally, let $G$ be a $QD$-group. Then by \cite[Proposition 2.2]{ABG70}, 
$N:=O^{2'}(G)$ is simple and the 
possible isomorphism types of $N$ are given by the main result of \cite{ABG73}. 
Since $C_G(N)\cap N=Z(N)=1$ we have $C_G(N)\le O_{2'}(G)=1$. 
The possibilities for $G/N\le\Out(N)$ can be deduced from \cite{Atlas}. 
Since $|G/N|$ is odd, no graph 
automorphism is involved. Hence, $G/N\le C_{(3,q-1)}\rtimes C_d$ or 
$G/N\le C_{(3,q+1)}\rtimes C_d$. In 
fact, $G/N$ must be abelian since $|G/N|$ is odd.
\end{proof}

\begin{Theorem}\label{thm:k(B)l(B)} 
Let $G$ be as in Theorem~\ref{thm:classification_wr_bis} and let $B:=B_0(kG)$. 
With the same labelling of 
cases as in Theorem~\ref{thm:classification_wr_bis} the following \smallskip holds:
\begin{enumerate}[leftmargin=20mm]
\item[{\rm\sf (WR1)\phantom{,4}}] $\ell(B)=1$, $k(B)=2^{2n-1}+3\cdot 2^{n-1}$, $k_0(B)=2^{n+1}$, 
$k_1(B)=2^{2n-1}-2^{n-1}$;
\item[{\rm\sf (WR2)\phantom{,4}}] $\ell(B)=2$, $k(B)=(2^{2n-1}+9\cdot 2^{n-1}+4)/3$, $k_0(B)=2^{n+1}$,\\
$k_1(B)=(2^{2n-1}-3\cdot 2^{n-1}+4)/3$; %$G\cong C_{2^n}^2\rtimes S_3$,
\item[{\rm\sf (WR3,4)}] $\ell(B)=2$, $k(B)=2^{2n-1}+2^{n+1}$, $k_0(B)=2^{n+1}$, 
$k_1(B)=2^{2n-1}-2^{n-1}$,\\ $k_n(B)=2^{n-1}$;
\item[{\rm\sf (WR5,6)}] $\ell(B)=3$, $k(B)=(2^{2n-1}+3\cdot 2^{n+1}+4)/3$, $k_0(B)=2^{n+1}$,\\ 
$k_1(B)=(2^{2n-1}-3\cdot 2^{n-1}+4)/3$, $k_{\color{black}n}(B)=2^{n-1}$.
\end{enumerate}
\end{Theorem}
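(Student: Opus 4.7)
The plan is to verify the invariants case by case, following the classification in Theorem~\ref{thm:classification_wr_bis}, and to exploit throughout the Clifford-theoretic principle that odd-order group extensions preserve both the $2$-block decomposition and the heights of ordinary characters.

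For {\sf (WR1)}, the group $G$ is itself a $2$-group, so $B = kG$ and $\ell(B) = 1$; Clifford theory applied to the abelian base $A := C_{2^n}\times C_{2^n}$ of index $2$ partitions $\widehat A$ under the $C_2$-swap into $2^n$ fixed points (each yielding two linear characters of $G$, of height $0$) and $\binom{2^n}{2}$ two-element orbits (each inducing one character of degree $2$, of height $1$), giving $k_0 = 2^{n+1}$ and $k_1 = 2^{2n-1}-2^{n-1}$. For {\sf (WR2)}, the same principle applied to the characteristic abelian normal subgroup $N := C_{2^n}\times C_{2^n}$, on which $\mathfrak S_3$ acts via its unique faithful indecomposable $2$-dimensional integral representation, yields $k(B)$ and its height distribution by enumerating $\mathfrak S_3$-orbits on $\widehat N$ (of sizes $1$, $2$, $3$, $6$) together with their inertial extensions. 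Here $\ell(B) = 2$ follows from $N = O_2(G)$ combined with $\ell(B_0(k\mathfrak S_3)) = 2$.

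For {\sf (WR3)}--{\sf (WR6)} the calculation reduces to the quasi-simple/simple case $B_0(kN)$ with $N \in \{\SL^n_2(q), \SU^n_2(q), \PSL_3(q), \PSU_3(q)\}$: since $|G/N|$ is odd, Clifford theory preserves blocks and heights, and it suffices to compute the invariants for $N$ itself. These are standard character-theoretic data, obtained from the character tables of $\GL_2(q)$, $\GU_2(q)$, $\SL_3(q)$, $\SU_3(q)$ via restriction through the subgroup $\det^{2^n} = 1$; in particular one obtains $\ell(B_0) = 2$ for the $\SL^n_2$/$\SU^n_2$ cases and $\ell(B_0) = 3$ for the $\PSL_3$/$\PSU_3$ cases, and these values are unchanged by the odd-order extension to $G$.

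The main obstacle is verifying $k_n(B) = 2^{n-1}$ in cases {\sf (WR3)}--{\sf (WR6)}. Here one must pin down the $2^{n-1}$ characters of exact $2$-adic valuation $n$ in their degrees: these correspond to the semisimple characters attached to $2$-elements of order $2^n$ in a maximal torus of $N$, whose degrees are $2^n$ times an odd integer. One then verifies that the field-automorphism action of $C_d$ (and the diagonal automorphism factor in $H$ in the $\PSL_3$/$\PSU_3$ case) fixes each of these characters individually, so that they neither split further nor merge into smaller $G$-orbits; this follows from the fact that the parametrising $2$-elements lie in a torus defined over the prime subfield. The remaining numerical identities $k(B) = k_0(B)+k_1(B)+k_n(B)$ then give all the stated formulas at once.
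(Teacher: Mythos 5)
Your treatment of {\sf (WR1)} and {\sf (WR2)} is essentially what the paper means by ``elementary group theory'': the orbit count on $\widehat{N}$ under the swap, respectively under $\mathfrak S_3$, does produce the stated values of $k(B)$, $k_0(B)$, $k_1(B)$ (I checked the arithmetic; the orbit sizes $1$, $3$, $6$ occur for {\sf (WR2)}, with no orbits of size $2$ since only the trivial character of $N$ is $C_3$-fixed). One slip: $\ell(B_0(k\mathfrak S_3))=1$, not $2$ (the $2$-dimensional simple $k\mathfrak S_3$-module is projective and forms its own block of defect $0$). The correct justification for $\ell(B)=2$ in {\sf (WR2)} is that $C_G(O_2(G))\le O_2(G)$, so $kG$ is a single block and $\ell(B)=|\mathrm{IBr}(G/O_2(G))|=|\mathrm{IBr}(\mathfrak S_3)|=2$. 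Also, your reduction to the quasi-simple normal subgroup $N$ of odd index is sound, but not by bare ``Clifford theory'' (a character of $B_0(kN)$ could a priori have proper inertia group in $G$); what makes it work is that $\Aut(P)$ is a $2$-group, so $G=NC_G(P)$ and the Alperin--Dade isomorphism $B_0(\mathcal ON)\cong B_0(\mathcal OG)$ applies — this is exactly Lemma~\ref{lem:Aut(P)} of the paper.

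The genuine gap is in cases {\sf (WR3)}--{\sf (WR6)}, which carry all the substance of the theorem and where your proposal is a plan rather than a proof. Everything that actually has to be established — which characters of $\SL_2^n(q)$, $\SU_2^n(q)$, $\PSL_3(q)$, $\PSU_3(q)$ lie in $B_0$, what their degrees are, that $\ell(B_0)=2$ resp.\ $3$, and above all that the \emph{only} heights occurring are $0$, $1$ and $n$ (you never rule out intermediate heights) — is deferred to ``standard character-theoretic data.'' The paper does not recompute this either: it cites K\"ulshammer's Proposition~(7.G) and Lemma~7.I of \cite{Kue80} for {\sf (WR3,4)} and for $\ell(B)$ in {\sf (WR5,6)}, and Brauer's Theorem~1A of \cite{Brauerwr} for $k(B)$ and the degrees in {\sf (WR5,6)}; these are precisely the nontrivial computations your proposal omits. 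Your sketch of the ``main obstacle'' $k_n(B)=2^{n-1}$ is also imprecise: e.g.\ for $\GL_2(q)$ the height-$n$ characters of $B_0$ are the cuspidal characters of degree $q-1$ attached to characters of order exactly $2^{n+1}$ (not $2^n$) of the anisotropic torus $C_{q^2-1}$, and for $\PGU_3(q)$ the semisimple characters attached to $2$-elements with centralizer $\GU_2(q)\times\GU_1(q)$ have \emph{odd} degree $q^2-q+1$ and height $0$ — so identifying ``the semisimple characters attached to $2$-elements of order $2^n$'' with the height-$n$ characters requires care and is not correct as stated. Finally, the concluding appeal to $k(B)=k_0(B)+k_1(B)+k_n(B)$ presupposes what must be proved, namely that no other heights occur. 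To close the gap you should either cite \cite{Kue80} and \cite{Brauerwr} as the paper does, or carry out the torus-by-torus count of $B_0$-characters and their $2$-adic degree valuations explicitly.
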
 

\begin{proof} 
Cases {\rm\sf (WR1)} and {\rm\sf (WR2)} follow from elementary group theory.
If Case {\rm\sf (WR3)} 
or Case {\rm\sf (WR4)} of Theorem~\ref{thm:classification_wr_bis} holds, then the numbers follow from 
\cite[Proposition~(7.G)]{Kue80}. % (even for non-principal blocks).
Suppose now that Case {\rm\sf (WR5)} or Case {\rm\sf (WR6)} holds, then the 
number $k(B)$ follows from \cite[{\color{black}Theorem 1A}]{Brauerwr} -- 
here, Brauer even computed the degrees of the ordinary irreducible characters in $B$ -- 
whereas the number $\ell(B)$ can 
be obtained with \cite[Lemma~7.I]{Kue80} for instance.
%\cite[Proposition~5G]{BrauerWreath}
\end{proof}

%%%%%%%%%%%%%%%%%
\section{Preliminaries}\label{sec:pre} 

We state below several results which will enable us to construct 
splendid Morita equivalences induced by Scott modules, but which are not restricted to characteristic $2$.
Therefore, throughout this section we may assume that $k$ is an algebraically 
closed field of arbitrary characteristic~$p>0$. \\

Our first main tool to construct splendid Morita equivalences is given by the 
following Theorem which is an extended version  
of a well-known result due to Alperin  \cite{Alp76} and Dade \cite{Dad77} 
 restated in terms of splendid Morita equivalences.

\begin{Theorem}[Alperin--Dade]\label{thm:AlperinDade}
Let $\widetilde{G}_{1}$ and $\widetilde{G}_{2}$ be a finite groups and assume $G_{1}\trianglelefteq \widetilde{G}_{1}$, $G_{2}\trianglelefteq \widetilde{G}_{2}$ 
are normal subgroups such that  $\widetilde{G}_{1}/G_{1}$, $\widetilde{G}_{2}/G_{2}$ are  $p'$-groups and having a common Sylow $p$-subgroup  $P\in\Syl_p(G_{1})\cap\Syl_{p}(G_{2})$ such that 
$\widetilde{G}_{1}=G_{1}C_{\widetilde{G}_{1}}(P)$ and $\widetilde{G}_{2}=G_{2}C_{\widetilde{G}_{2}}(P)$.  Then the following assertions hold. 
\begin{enumerate} 
\renewcommand{\labelenumi}{\rm{(\alph{enumi})}} 
\item
If $\tilde{e}$ and $e$ denote the block idempotents of $B_0(k\widetilde{G}_{1})$ 
and $B_0(kG_{1})$, respectively, then 
the map $B_0(kG_{1})\longrightarrow B_0(k\widetilde{G}_{1}), a\mapsto a\tilde{e}$ 
is an isomorphism of $k$-algebras. 
Moreover, the right $k[\widetilde{G}_{1}\times G_{1}]$-module
${\mathrm{Sc}}(\widetilde G_{1}\times G_{1},\Delta P) 
=B_0(k\widetilde G_{1}){\downarrow}^{\widetilde G_{1}\times\widetilde G_{1}}_{\widetilde{G}_{1}\times G_{1}}=\tilde{e}k\widetilde{G}_{1}=\tilde{e}k\widetilde{G}_{1}e$, 
induces a splendid Morita equivalence between $B_0(k\widetilde G_{1})$ and $B_0(kG_{1})$. 
\item
The Scott module ${\mathrm{Sc}}(\widetilde G_{1}\times\widetilde G_{2}, \Delta P)$ 
induces a splendid Morita equivalence 
between $B_0(k\widetilde G_{1})$ and $B_0(k\widetilde G_{2})$ if and only if  the Scott module 
${\mathrm{Sc}}(G_{1}\times G_{2}, \Delta P)$ induces a
splendid Morita equivalence 
between $B_0(kG_{1})$ and $B_0(kG_{2})$. 
\end{enumerate}
\end{Theorem}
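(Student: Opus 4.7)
My plan for part~(a) is to split the statement into two tasks: first, establishing the $k$-algebra isomorphism $B_{0}(kG_{1}) \cong B_{0}(k\widetilde{G}_{1})$ via $a \mapsto a\tilde{e}$; and second, identifying the resulting bimodule with the Scott module. The first task is the classical Alperin--Dade theorem \cite{Alp76,Dad77}: under the hypothesis that $\widetilde{G}_{1}/G_{1}$ is a $p'$-group and $\widetilde{G}_{1} = G_{1}C_{\widetilde{G}_{1}}(P)$, the idempotent $\tilde{e}$ is a primitive central idempotent of $k\widetilde{G}_{1}$ subordinate to $e$, and multiplication by $\tilde{e}$ gives the claimed isomorphism. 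The subsequent identities $B_{0}(k\widetilde{G}_{1}){\downarrow}^{\widetilde{G}_{1}\times\widetilde{G}_{1}}_{\widetilde{G}_{1}\times G_{1}} = \tilde{e} k\widetilde{G}_{1} = \tilde{e} k\widetilde{G}_{1}e$ then follow formally: the first is the description of $B_{0}(k\widetilde{G}_{1})$ as a regular bimodule restricted to $\widetilde{G}_{1} \times G_{1}$, while the second uses $\tilde{e}(1-e) = 0$, which holds because $1-e$ is central in $k\widetilde{G}_{1}$ (as a $\widetilde{G}_{1}$-stable sum of non-principal block idempotents of $kG_{1}$) and $\tilde{e}e = \tilde{e}$ by subordinacy.

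To identify this bimodule with the Scott module $\Sc(\widetilde{G}_{1} \times G_{1}, \Delta P)$, I would verify the three defining properties. First, $\tilde{e} k\widetilde{G}_{1}$ is a $p$-permutation $k[\widetilde{G}_{1} \times G_{1}]$-module with vertex $\Delta P$: indeed, $B_{0}(k\widetilde{G}_{1})$ viewed as a $k[\widetilde{G}_{1} \times \widetilde{G}_{1}]$-module has vertex $\Delta P$ by a standard block-theoretic argument, and the restriction along the inclusion $\Delta P \subseteq \widetilde{G}_{1} \times G_{1}$ preserves this vertex. Second, indecomposability of the bimodule follows because its bimodule endomorphism ring is $Z(B_{0}(k\widetilde{G}_{1}))$, which is local. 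Third, the augmentation $k\widetilde{G}_{1} \to k$ sends $\tilde{e}$ to $1$, yielding a nonzero $k[\widetilde{G}_{1} \times G_{1}]$-homomorphism from $\tilde{e}k\widetilde{G}_{1}$ to the trivial module, so the trivial module appears in the head. By the uniqueness characterization of the Scott module, the identification is then complete.

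For part~(b), my strategy is to use transitivity of splendid Morita equivalences via tensor product, combined with the result of~(a). By~(a), the Scott modules $\Sc(\widetilde{G}_{i} \times G_{i}, \Delta P)$ induce splendid Morita equivalences $B_{0}(k\widetilde{G}_{i}) \sim_{SM} B_{0}(kG_{i})$ for $i = 1, 2$. If $\Sc(G_{1} \times G_{2}, \Delta P)$ induces a splendid Morita equivalence $B_{0}(kG_{1}) \sim_{SM} B_{0}(kG_{2})$, then the composition
\[
\Sc(\widetilde{G}_{1} \times G_{1}, \Delta P) \otimes_{kG_{1}} \Sc(G_{1} \times G_{2}, \Delta P) \otimes_{kG_{2}} \Sc(G_{2} \times \widetilde{G}_{2}, \Delta P)
\]
induces a splendid Morita equivalence $B_{0}(k\widetilde{G}_{1}) \sim_{SM} B_{0}(k\widetilde{G}_{2})$, and a Mackey-formula argument identifies this indecomposable tensor product with $\Sc(\widetilde{G}_{1} \times \widetilde{G}_{2}, \Delta P)$. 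The converse follows symmetrically by tensoring with the inverse Scott modules $\Sc(G_{1} \times \widetilde{G}_{1}, \Delta P)$ and $\Sc(\widetilde{G}_{2} \times G_{2}, \Delta P)$.

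The main technical obstacle I anticipate is the Mackey-style analysis of tensor products of Scott modules: while transitivity of splendid Morita equivalences guarantees that the composition is a splendid Morita equivalence, matching the composed bimodule with $\Sc(\widetilde{G}_{1} \times \widetilde{G}_{2}, \Delta P)$ requires careful bookkeeping of $\Delta P$--$\Delta P$ double cosets in the relevant product groups, and verification that any extraneous indecomposable summand either lies outside the principal block or has a vertex strictly smaller than $\Delta P$. The crucial tool is the characterization of the Scott module as the unique indecomposable $p$-permutation summand of an induced trivial module whose head contains the trivial module.
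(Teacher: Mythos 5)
Your proposal is correct and follows essentially the same route as the paper, which disposes of (a) by citing Alperin \cite{Alp76} and Dade \cite{Dad77} and of (b) by citing the tensor-transitivity argument of \cite[Lemma~5.1]{KLS20}; your sketch simply unpacks what those references contain (the algebra isomorphism $a\mapsto a\tilde e$, the identification of $\tilde e k\widetilde G_1$ with the Scott module via indecomposability, vertex and trivial head, and the Mackey-style identification of the composed bimodule with ${\mathrm{Sc}}(\widetilde G_1\times\widetilde G_2,\Delta P)$).
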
 
\begin{proof} 
Assertion~(a) follows from \cite{Alp76, Dad77}. More precisely, the given map 
is an isomorphism of $k$-algebras by \cite[Theorem]{Dad77} and 
%it is immediate to check that this map is a homomorphism of interior $P$-algebras.
%Therefore, the source algebras of $B_0(k\widetilde G_{1})$ and $B_0(kG_{1})$ are isomorphic as interior $P$-algebras and
\cite[Theorems 1 and  2]{Alp76} proves that restriction from $\widetilde{G}_{1}$ to $G_{1}$ induces a splendid Morita equivalence. 
Assertion~(b) is given by~\cite[Lemma~5.1]{KLS20}.\\
\end{proof}

\begin{Lemma}\label{lem:Aut(P)} 
Let $\widetilde{G}_{1}, \widetilde{G}_{2}$ be finite groups. Assume that 
$G_{1}\unlhd \widetilde G_{1}$ and  $G_{2}\unlhd \widetilde G_{2}$ are normal subgroups 
such that  $\widetilde{G}_{1}/G_{1}$, $\widetilde{G}_{2}/G_{2}$ are $p'$-groups 
and assume that $G_{1}$ and $G_{2}$ have a common Sylow $p$-subgroup  
$P$  such that 
$\Aut(P)$ is a $p$-group. 
Then, conclusions {\rm(a)} and {\rm(b)} of Theorem~\ref{thm:AlperinDade} hold. 
\end{Lemma}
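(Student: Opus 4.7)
The plan is to reduce to Theorem~\ref{thm:AlperinDade} by showing that the hypothesis $\Aut(P)$ is a $p$-group forces the equalities $\widetilde{G}_{i}=G_{i}C_{\widetilde{G}_{i}}(P)$ for $i=1,2$ automatically. Once these are established, conclusions (a) and (b) are immediate applications of Theorem~\ref{thm:AlperinDade}.

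First I would observe that, since $\widetilde{G}_{i}/G_{i}$ is a $p'$-group and $P\in\Syl_{p}(G_{i})$, the subgroup $P$ is also a Sylow $p$-subgroup of $\widetilde{G}_{i}$. The Frattini argument applied to the normal subgroup $G_{i}\unlhd\widetilde{G}_{i}$ then gives $\widetilde{G}_{i}=G_{i}N_{\widetilde{G}_{i}}(P)$. Hence it suffices to prove that $N_{\widetilde{G}_{i}}(P)=N_{G_{i}}(P)\,C_{\widetilde{G}_{i}}(P)$, because then $\widetilde{G}_{i}=G_{i}N_{G_{i}}(P)C_{\widetilde{G}_{i}}(P)=G_{i}C_{\widetilde{G}_{i}}(P)$, as required.

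The main step is the order-theoretic argument for the latter equality. Consider the quotient $Q_{i}:=N_{\widetilde{G}_{i}}(P)\big/\bigl(N_{G_{i}}(P)\,C_{\widetilde{G}_{i}}(P)\bigr)$. On the one hand, $Q_{i}$ is a quotient of $N_{\widetilde{G}_{i}}(P)/C_{\widetilde{G}_{i}}(P)$, which embeds into $\Aut(P)$ via the conjugation action and is therefore a $p$-group by hypothesis. On the other hand, since $N_{G_{i}}(P)=N_{\widetilde{G}_{i}}(P)\cap G_{i}$, the group $N_{\widetilde{G}_{i}}(P)/N_{G_{i}}(P)$ embeds into $\widetilde{G}_{i}/G_{i}$, so it is a $p'$-group, and $Q_{i}$ is also a quotient of this. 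Thus $Q_{i}$ is simultaneously a $p$-group and a $p'$-group, hence trivial.

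With the equalities $\widetilde{G}_{1}=G_{1}C_{\widetilde{G}_{1}}(P)$ and $\widetilde{G}_{2}=G_{2}C_{\widetilde{G}_{2}}(P)$ in hand, both parts of Theorem~\ref{thm:AlperinDade} apply verbatim, yielding the claimed splendid Morita equivalences. The proof is essentially a compact group-theoretic reduction; there is no serious obstacle, the only point worth care being the correct identification of $Q_i$ as a common quotient of a $p$-group and of a $p'$-group.
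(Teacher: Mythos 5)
Your proof is correct and follows essentially the same route as the paper: verify the hypotheses $\widetilde{G}_{i}=G_{i}C_{\widetilde{G}_{i}}(P)$ of Theorem~\ref{thm:AlperinDade} via the Frattini argument combined with the embedding of $N_{\widetilde{G}_{i}}(P)/C_{\widetilde{G}_{i}}(P)$ into $\Aut(P)$. The only (harmless) variation is the intermediate step: the paper concludes $N_{\widetilde{G}_{i}}(P)=PC_{\widetilde{G}_{i}}(P)$ directly from $P$ being Sylow in $N_{\widetilde{G}_{i}}(P)$, whereas you show $N_{\widetilde{G}_{i}}(P)=N_{G_{i}}(P)\,C_{\widetilde{G}_{i}}(P)$ by exhibiting the quotient $Q_i$ as simultaneously a $p$-group and a $p'$-group.
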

\begin{proof}
It suffices to prove that the hypotheses of Theorem~\ref{thm:AlperinDade} are satisfied. So, let $i\in\{1,2\}$.
Since $\Aut(P)$ is a $p$-group we have  $N_{\widetilde G_{i}}(P)=PC_{\widetilde G_{i}}(P)$.
Moreover, by Frattini's argument
$\widetilde G_{i}=G_{i}N_{\widetilde G_{i}}(P)$, thus 
$\widetilde G_{i}=G_{i}C_{\widetilde G_{i}}(P)$, as required.
\end{proof}

Next, it is well-known that inflation from the quotient by a normal $p'$-subgroup induces an isomorphism of blocks as $k$-algebras.  In fact, there is splendid Morita equivalence induced by a Scott module and we have the following stronger result.

\begin{Lemma}\label{lem:G/Op'(G)}%
Let $G_{1}, G_{2}$ be finite groups with a common Sylow $p$-subgroup $P$. Let $N_{1}\unlhd G_{1}$ and $N_{2}\unlhd G_{2}$ be normal $p'$-subgroups and write $^{-}:G_{1}\longrightarrow G_{1}/N_{1}=:\overline{G_{1}}$, respectively {$^{-}:G_{2}\longrightarrow G_{2}/N_{2}=:\overline{G_{2}}$},  for the quotient homomorphisms, so that, by abuse of notation, we may identify  $\overline{P}=PN_{1}/N_{1}\cong P$ with $\overline{P}=PN_{2}/N_{2}\cong P$.   Then the following assertions hold: 
\begin{enumerate}[label={\rm(\alph*)}] 
\item
${\mathrm{Sc}}(G_{1}\times\overline{G_{1}}, \Delta P)$ induces 
a splendid Morita equivalence between $B_0(kG_{1})$ and $B_0(k\overline{G_{1}})$, where 
$\Delta P$ is identified with $\{(u, \bar u)\,|\, u\in P\}$;
\item
${\mathrm{Sc}}(G_{1}\times G_2,\Delta P)$ induces a splendid Morita equivalence
between $B_0(kG_{1})$ and $B_0(kG_{2})$ if and only if 
${\mathrm{Sc}}(\overline{G_{1}}\times\overline{G_{2}},\Delta\overline{P})$ induces
a splendid Morita equivalence 
between $B_0(k\overline{G_{1}})$ and $B_0(k\overline{G_{2}})$. 
\end{enumerate}
\end{Lemma}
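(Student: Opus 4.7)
For part~(a), my plan is to realise the Scott module as the bimodule implementing the classical inflation isomorphism between the principal blocks. Since $N_{1}$ is a normal $p'$-subgroup of $G_{1}$, the element $e_{N_{1}} := |N_{1}|^{-1}\sum_{n\in N_{1}} n$ is a central idempotent of $kG_{1}$, and the natural surjection $\pi:kG_{1}\twoheadrightarrow k\overline{G_{1}}$ restricts to a $k$-algebra isomorphism $kG_{1}\cdot e_{N_{1}}\xrightarrow{\sim} k\overline{G_{1}}$ sending the principal block idempotent $e_{0}$ of $kG_{1}$ to the principal block idempotent $\bar e_{0}$ of $k\overline{G_{1}}$. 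This gives an isomorphism $B_{0}(kG_{1})\cong B_{0}(k\overline{G_{1}})$ of $k$-algebras, and I will view $B_{0}(k\overline{G_{1}})$ as the corresponding $(B_{0}(kG_{1}),B_{0}(k\overline{G_{1}}))$-bimodule, equipped with the right $k(G_{1}\times\overline{G_{1}})$-action $m\cdot(g_{1},\bar g_{2}) := \bar g_{1}^{-1}m\bar g_{2}$.

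To identify this bimodule with $\Sc(G_{1}\times\overline{G_{1}},\Delta P)$, I will verify three features. Setting $H := \{(g,\bar g)\mid g\in G_{1}\}\leq G_{1}\times\overline{G_{1}}$ (the graph of $\pi$), a direct check gives $k\overline{G_{1}}\cong\Ind^{G_{1}\times\overline{G_{1}}}_{H}(k_{H})$; since $\Delta P\leq H$ with $[H:\Delta P]=[G_{1}:P]$ coprime to $p$, the trivial module $k_{H}$ is a direct summand of $\Ind^{H}_{\Delta P}(k_{\Delta P})$, so $B_{0}(k\overline{G_{1}})$ is a $p$-permutation direct summand of $\Ind^{G_{1}\times\overline{G_{1}}}_{\Delta P}(k_{\Delta P})$. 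The element $v := \sum_{\bar g\in\overline{G_{1}}}\bar g$ spans a trivial $k\overline{G_{1}}$-submodule, hence lies in $B_{0}(k\overline{G_{1}})$, and is $(G_{1}\times\overline{G_{1}})$-invariant under the bimodule action, placing the trivial $k(G_{1}\times\overline{G_{1}})$-module in the socle. Finally, via the isomorphism $B_{0}(kG_{1})\cong B_{0}(k\overline{G_{1}})$, the bimodule endomorphism ring of $B_{0}(k\overline{G_{1}})$ coincides with $Z(B_{0}(k\overline{G_{1}}))$, which is local as the centre of a block algebra; this forces indecomposability both as a bimodule and as a $k(G_{1}\times\overline{G_{1}})$-module. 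These three properties together pin down $B_{0}(k\overline{G_{1}})\cong\Sc(G_{1}\times\overline{G_{1}},\Delta P)$.

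For part~(b), my plan is to compose splendid Morita equivalences supplied by~(a). Assume $\Sc(G_{1}\times G_{2},\Delta P)$ induces a splendid Morita equivalence $B_{0}(kG_{1})\sim_{SM} B_{0}(kG_{2})$. Tensoring with the bimodules furnished by~(a) applied to $G_{1}$ and $G_{2}$ yields the triple product
$$M := \Sc(\overline{G_{1}}\times G_{1},\Delta P)\otimes_{B_{0}(kG_{1})}\Sc(G_{1}\times G_{2},\Delta P)\otimes_{B_{0}(kG_{2})}\Sc(G_{2}\times\overline{G_{2}},\Delta P),$$
which is a $(B_{0}(k\overline{G_{1}}),B_{0}(k\overline{G_{2}}))$-bimodule inducing a splendid Morita equivalence. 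As such, $M$ is indecomposable, is a $p$-permutation $k(\overline{G_{1}}\times\overline{G_{2}})$-module with vertex contained in $\Delta\overline{P}$, and contains the trivial $k(\overline{G_{1}}\times\overline{G_{2}})$-module in its socle (each of the three Scott-module factors induces a Morita functor sending trivial module to trivial module, so the composite does too). The uniqueness of the Scott module with these features gives $M\cong\Sc(\overline{G_{1}}\times\overline{G_{2}},\Delta\overline{P})$. The reverse implication is symmetric, obtained by tensoring $\Sc(\overline{G_{1}}\times\overline{G_{2}},\Delta\overline{P})$ with $\Sc(G_{1}\times\overline{G_{1}},\Delta P)$ on the left and $\Sc(\overline{G_{2}}\times G_{2},\Delta P)$ on the right.

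The hardest step will be the identification in~(b) of the triple tensor product with the Scott module on $\Delta\overline{P}$, rather than allowing it to split off extraneous indecomposable summands. The subtlety lies in simultaneously controlling the vertex (from the $p$-permutation structure preserved under tensor products over principal blocks), the indecomposability (from the Morita equivalence property), and the trivial submodule in the socle (inherited through the composition of Scott-module Morita functors). Once these three features are in place, the characterising property of the Scott module as the unique indecomposable $p$-permutation direct summand of $\Ind^{\overline{G_{1}}\times\overline{G_{2}}}_{\Delta\overline{P}}(k)$ containing the trivial module completes the identification.
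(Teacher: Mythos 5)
Your proposal is correct and follows the same two-step strategy as the paper: part~(a) identifies $\Sc(G_{1}\times\overline{G_{1}},\Delta P)$ with the bimodule $B_{0}(k\overline{G_{1}})$ realising the inflation isomorphism, and part~(b) composes with the bimodules from~(a). Where the paper's proof of~(a) simply asserts the identification $\Sc(G_{1}\times\overline{G_{1}},\Delta P)={}_{kG_{1}}B_{0}(k\overline{G_{1}})_{k\overline{G_{1}}}$, you carry out the verification (summand of $\Ind_{\Delta P}^{G_{1}\times\overline{G_{1}}}(k)$ via the graph subgroup $H$, trivial submodule spanned by $\sum_{\bar g}\bar g$, indecomposability from $Z(B)$ being local), which is exactly what the paper leaves implicit. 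For~(b) the paper quotes an isomorphism of the triple tensor product with a Scott module and refers to the proof of \cite[Lemma~5.1]{KLS20}; your route, identifying the composite under the Morita hypothesis via the characterising properties of the Scott module, is in substance that same argument (it is also how the paper's Appendix concludes). One point must be tightened: in the middle of~(b) you invoke uniqueness of the Scott module among indecomposable $p$-permutation modules with \emph{vertex contained in} $\Delta\overline{P}$ and a trivial socle constituent. That uniqueness statement is false as phrased, since $\Sc(\overline{G_{1}}\times\overline{G_{2}},Q)$ has all of these properties for every subgroup $Q\leq\Delta\overline{P}$, and these modules are pairwise non-isomorphic for non-conjugate $Q$. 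What is actually needed -- and what you correctly state in your final paragraph -- is that $M$ is a direct summand of $\Ind_{\Delta\overline{P}}^{\overline{G_{1}}\times\overline{G_{2}}}(k)$; this does hold, but it follows from the explicit associativity computation
$k\overline{G_{1}}\otimes_{kG_{1}}\bigl(kG_{1}\otimes_{kP}kG_{2}\bigr)\otimes_{kG_{2}}k\overline{G_{2}}\cong k\overline{G_{1}}\otimes_{k\overline{P}}k\overline{G_{2}}\cong \Ind_{\Delta\overline{P}}^{\overline{G_{1}}\times\overline{G_{2}}}(k)$
(the chain of isomorphisms displayed in the paper's Appendix), not merely from a bound on the vertex, so this step should be made explicit.
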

\begin{proof} {\rm (a)} By the assumption $N_{1}\leq O_{p'}(G_{1})$, hence $N_{1}$ acts trivially on $B_0(kG_{1})$. 
Thus, $B_0(kG_{1})$ and its image $B_0(k\overline{G_{1}})$ in $k\overline{G_{1}}$ are isomorphic as
interior $P$-algebras. Part (a) follows then immediately from the fact that 
${\mathrm{Sc}}(G_{1}\times\overline G_{1}, \Delta P)={_{kG_{1}}B_0(kG_{1})}_{k\overline{G_{1}}}$ (seen as a $(kG_{1},k\overline{G_{1}})$-bimodule). Part {\rm (b)} follows from {\rm(a)} and the fact that
$${\mathrm{Sc}}(G_{1}\times\overline{G_{1}}, \Delta P)\,\otimes_{B_{0}(k\overline{G_{1}})} {\mathrm{Sc}}(\overline{G_{1}}\times \overline{G_2},\Delta \overline{P}) \,\otimes_{B_{0}(k\overline{G_{2}})} {\mathrm{Sc}}(\overline{G_{2}}\times G_{2}, \Delta P) 
 \cong  {\mathrm{Sc}}(G_{1}\times G_{2},\Delta P)\,.$$
 (See e.g. the proof of \cite[Lemma~5.1]{KLS20} for a detailed argument proving this isomorphism.)
\end{proof}

The following Lemma is also essential to treat central extensions.

\begin{Lemma}\label{lem:G/Z(G)}
Let $G_1$ and $G_2$ be finite groups having a common Sylow $p$-subgroup $P\in\Syl_p(G_{1})\cap\Syl_{p}(G_{2})$. 
Assume moreover that $Z_1\leq Z(G_1)$ and $Z_2\leq Z(G_2)$ are central subgroups 
such that $P\cap Z_1=P\cap Z_2$ (after identification of the chosen Sylow $p$-subgroups of $G_{1}$ and $G_{2}$). Set $\overline{G_1}:=G_1/Z_1$ and 
 $\overline{G_2}:=G_2/Z_2$. Then the subgroup  $\overline P:=PZ_1/Z_1 (\cong P/(P\cap Z_{1})\cong P/P\cap Z_{2})\cong  PZ_2/Z_2)$ 
can be considered as a common Sylow $p$-subgroup of $\overline{G_{1}}$ and $\overline{G_{2}}$.
Then, 
${\mathrm{Sc}}(G_1\times G_2,\Delta P)$ induces a splendid Morita equivalence
between $B_0(kG_1)$ and $B_0(kG_2)$ if and only if
${\mathrm{Sc}}(\overline{G_1}\times\overline{G_2},\Delta\overline P)$ induces
a splendid Morita equivalence 
between $B_0(k\overline{G_1})$ and $B_0(k\overline{G_2})$.
\end{Lemma}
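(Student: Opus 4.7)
My plan is to imitate the strategy of Lemma~\ref{lem:G/Op'(G)}, adapting the argument to handle the subtlety introduced by the central $p$-part of the $Z_i$. First, I would apply Lemma~\ref{lem:G/Op'(G)} to the normal $p'$-subgroups $(Z_1)_{p'}\unlhd G_1$ and $(Z_2)_{p'}\unlhd G_2$, both of which are contained in $O_{p'}(G_i)$. This reduces the question to the quotients $G_i/(Z_i)_{p'}$, in which the image of $Z_i$ is its $p$-part $(Z_i)_p=P\cap Z_i$; under the hypothesis $P\cap Z_1=P\cap Z_2$ these $p$-parts are identified. We may therefore assume that $Z_1=Z_2=:Z\leq P$ is a common central $p$-subgroup of $G_1$ and $G_2$.

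Next, I would observe that $\Delta Z\leq\Delta P$ is central in $G_1\times G_2$, and check directly on the induced module $k\uparrow^{G_1\times G_2}_{\Delta P}$ that $\Delta Z$ acts trivially (moving $(z,z)$ across the tensor using that $z$ is central in each $G_i$). Hence $\Delta Z$ acts trivially on the Scott summand $M:={\mathrm{Sc}}(G_1\times G_2,\Delta P)$; in bimodule language, the left and right actions of each $z\in Z$ on $M$ coincide. Consequently $kZ$ embeds as a central subalgebra in the source algebra $A_i$ of $B_0(kG_i)$ via the interior $P$-algebra structure, and $M$ becomes a module over $A_1\otimes_{kZ}A_2^{\,\mathrm{op}}$.

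The crux of the proof is then the following source-algebra identification: the quotient $A_i/\omega(kZ)\,A_i$ is canonically isomorphic, as an interior $\overline P$-algebra (where $\overline P=P/Z$), to the source algebra of $B_0(k\overline{G_i})$. Because $kZ$ is local with residue field $k$ and the interior $P$-structure on $A_i$ extends the interior $\overline P$-structure on this quotient in an essentially unique way, an isomorphism $A_1\cong A_2$ of interior $P$-algebras is equivalent to an isomorphism $A_1/\omega(kZ)A_1\cong A_2/\omega(kZ)A_2$ of interior $\overline P$-algebras. Translating back into Scott modules via Puig's characterisation of splendid Morita equivalence, this yields the desired two-sided implication; to close, I would verify that the Scott module on each side is indeed the unique (up to isomorphism) $p$-permutation bimodule with the relevant diagonal as vertex which induces a Morita equivalence and contains the trivial module in its head.

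I expect the main obstacle to lie in the third step: unlike in Lemma~\ref{lem:G/Op'(G)}, where the central normal $p'$-subgroup is annihilated by the principal block idempotent and the defect group is preserved, here the defect group strictly shrinks from $P$ to $\overline P$ when passing to $\overline{G_i}$. Handling this change carefully requires controlling how the central subalgebra $kZ\leq A_i$ deforms the source algebra, which is where the locality of $kZ$, the rigidity of the interior $P$-structure, and the containment $kP\hookrightarrow A_i$ are essential.
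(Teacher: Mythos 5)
Your overall architecture matches the paper's: split each $Z_i$ into its $p$-part $P\cap Z_i$ and its $p'$-part, dispose of the $p'$-part with Lemma~\ref{lem:G/Op'(G)}(b), and reduce the central $p$-subgroup case to a statement about source algebras of $p$-central extensions. (The paper quotients in the opposite order, first by $P\cap Z_i$ via Theorem~\ref{Prop3.3} and then by the $p'$-part, but that is immaterial.) The problem is that your third step, which you correctly identify as the crux, is asserted rather than proved, and it hides two genuinely hard points. First, the claim that an interior $P$-algebra isomorphism $A_1\cong A_2$ is \emph{equivalent} to an interior $\overline P$-algebra isomorphism of the quotients $A_i/\omega(kZ)A_i$ is not a routine consequence of $kZ$ being local: the downward direction is easy, but the upward direction — lifting an isomorphism of the quotient source algebras to one of the source algebras themselves — is precisely Puig's theorem on source algebras of $p$-central extensions (\cite[Corollary~1.12]{Pui01}), which the paper invokes explicitly in the proof of Theorem~\ref{Prop3.3}. "Essential uniqueness of the extension of the interior structure" is the conclusion of that theorem, not an input one can take for granted.

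Second, and more seriously, even after lifting one only obtains that \emph{some} indecomposable direct summand $\mathcal M$ of $kG_1j_1\otimes_{kP}j_2kG_2$ induces a Morita equivalence between $B_0(kG_1)$ and $B_0(kG_2)$; the lemma asserts that the \emph{Scott module} does. Your closing plan ("verify that the Scott module is the unique $p$-permutation bimodule with diagonal vertex inducing a Morita equivalence and containing the trivial module in its head") states the right uniqueness criterion, but the actual work is to show that the lifted bimodule $\mathcal M$ satisfies it, i.e.\ that $k_{G_1}\otimes_{B_1}\mathcal M\cong k_{G_2}$. This does not follow formally: one must track the trivial module through the twisted source-algebra identifications (this is the role of Corollary~\ref{cor:Lin18-9.7.4} and the computation $(\overline{S_2}\,\overline{j_2})_\Phi\cong k_{\overline{G_2}}\,\overline{j_2}$ in the appendix). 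A related oversight appears earlier in your argument: $\Sc(\overline{G_1}\times\overline{G_2},\Delta\overline P)$ is in general only a direct summand of the image $\overline M$ of the Scott module downstairs, not equal to it — this is exactly the gap in the published proof of \cite[Proposition~3.3(b)]{KL20b} that Appendix~\ref{App:A} of this paper was written to repair. So while your route is the intended one, the crux step needs Puig's lifting theorem plus the Scott-module identification argument of Theorem~\ref{Prop3.3}, neither of which your sketch supplies.
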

\begin{proof}
Let $i\in\{1,2\}$.
Clearly, we have $Z_i=(P\cap Z_{i})\times O_{p'}(Z_i)$ and 
$$\overline{G_{i}}=G_{i}/Z_{i}\cong \left( G_{i}/(P\cap Z_{i}) \right) 
/\left( Z_{i} / (P\cap Z_{i}) \right)=:\overline{\overline{G_{i}}}\,.$$
Write $\widetilde{P}$  for the image of~$P$ in the quotients $G_{i}/(P\cap Z_{i})$ 
and write $\overline{\overline{P}}$ for the image of~$P$ in the quotients $\overline{\overline{G_{i}}}$. 
Now, on the one hand,  by Theorem~\ref{Prop3.3},  the Scott module 
{${\mathrm{Sc}}(G_1\times G_2,\Delta P)$} induces a splendid Morita equivalence
between $B_0(kG_1)$ and $B_0(kG_2)$
if and only if 
${\mathrm{Sc}}(G_1/(P\cap Z_{1})\times G_2/(P\cap Z_{2}),\Delta \widetilde{P})$ 
induces a splendid Morita equivalence
between $B_0(k[G_1/(P\cap Z_{1})])$ and $B_0(k[G_2/(P\cap Z_{2})])$, 
which by Lemma~\ref{lem:G/Op'(G)}(b)  happens if and only if 
${\mathrm{Sc}}(\overline{\overline{G_{1}}}\times \overline{\overline{G_{2}}},\Delta \overline{\overline{P}})$
induces a splendid Morita equivalence between $B_0(k\overline{\overline{G_1}})$ and $B_0(k\overline{\overline{G_2}})$.
The claim follows. 
\end{proof}

The next theorem is a standard method, called the ``gluing method'', which was already applied in 
\cite{KL20a,KLS20}.  It relies on gluing results, allowing us to 
construct stable equivalences of Morita type, and is a 
slight variation of different results of the same type due to 
Brou\'e, Rouquier, Linckelmann and Rickard.
See~e.g.~{\color{black}\cite[6.3.Theorem]{Bro94}}, \cite[Theorem~5.6]{Rou01} and 
\cite[Theorem 3.1]{Lin01}. 

\begin{Theorem}\label{Thm:ProveExSMEQ}
Let $G_1$ and $G_2$ be finite groups with a common Sylow $p$-subgroup $P$ 
satisfying $\mathcal F_P(G_1)=\mathcal F_P(G_2)$.  
Then, $M:={\mathrm{Sc}}(G_1\times G_2,\Delta P)$ 
induces a splendid Morita equivalence  between 
$B_0(kG_1)$ and $B_0(kG_2)$ provided the following two conditions are satisfied:
\begin{itemize}
\item[\rm(I)] for every subgroup $Q\leq P$ of order $p$, 
the bimodule $M(\Delta Q)$ 
induces a Morita equivalence between $B_0(k\,C_{G_1}(Q))$ and $B_0(k\,C_{G_2}(Q))$; and 
\item[\rm(II)] for every simple $B_0(kG_1)$-module $S_1$, the $B_0(kG_2)$-module 
$S_1\otimes_{B_0(kG_1)} M$ is again simple. 
\end{itemize}
\end{Theorem}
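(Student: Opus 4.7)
The strategy is the standard two-step ``gluing'' argument used in \cite{KL20a,KLS20}: first show that $M := {\mathrm{Sc}}(G_1 \times G_2, \Delta P)$ induces a splendid \emph{stable} equivalence of Morita type between $B_0(kG_1)$ and $B_0(kG_2)$, and then upgrade this to a splendid Morita equivalence using the simple-to-simple hypothesis (II).

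First, by construction, $M$ is a $p$-permutation $k[G_1 \times G_2]$-module with vertex contained in $\Delta P$, hence a $p$-permutation $(B_0(kG_1), B_0(kG_2))$-bimodule of the right shape. The hypothesis $\mathcal F_P(G_1) = \mathcal F_P(G_2)$ guarantees that the local structure seen from the two sides agrees: for every $p$-subgroup $Q \leq P$, the Brauer quotient $M(\Delta Q)$ identifies (up to isomorphism) with a Scott module for the centralisers $C_{G_1}(Q) \times C_{G_2}(Q)$ with vertex $\Delta C_P(Q)$, and in particular is non-zero. Together with hypothesis (I) at subgroups of order $p$, this is the input required to invoke one of the gluing theorems in the style of Brou\'e \cite[6.3]{Bro94}, Rouquier \cite[Theorem~5.6]{Rou01} or Linckelmann \cite[Theorem~3.1]{Lin01}, from which it follows that $M$ induces a splendid stable equivalence of Morita type between $B_0(kG_1)$ and $B_0(kG_2)$.

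Next, to lift the stable equivalence to a genuine Morita equivalence, I would invoke the well-known principle that a stable equivalence of Morita type between two self-injective algebras which maps every simple module to a simple module (with no projective direct summand) is already a Morita equivalence. The functor $- \otimes_{B_0(kG_1)} M$ satisfies exactly this property by hypothesis (II), so it must be an equivalence of module categories. Since $M$ is a $p$-permutation bimodule throughout, the resulting equivalence is splendid.

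\textbf{Main obstacle.} The delicate part is the first step, namely verifying that hypothesis (I), supplied only at subgroups of order $p$, suffices to trigger the gluing theorem, whose natural formulation asks for compatible local Morita equivalences at \emph{every} non-trivial $p$-subgroup. The reduction hinges on: (i) the identification of Brauer quotients of Scott modules on the diagonal with Scott modules of centralisers, so that the local equivalence at a subgroup of order~$p$ propagates to its overgroups; and (ii) a Brauer-indecomposability input (in the present setting, that of \cite{KT20} for wreathed vertices), which is what ensures that the assembled local data genuinely descend to a splendid stable equivalence rather than merely to a disparate collection of Morita equivalences. Once Step~1 is secured, Step~2 is essentially formal.
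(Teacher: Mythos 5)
Your proposal is correct and follows exactly the paper's route: Condition (I) together with the gluing result (in the form of \cite[Lemma~4.1]{KL20a}, an adaptation of \cite[Theorem~5.6]{Rou01} which indeed only requires the local Morita equivalences at subgroups of order $p$) yields a stable equivalence of Morita type, and Condition (II) upgrades it to a Morita equivalence via \cite[Theorem~2.1]{Lin96a}, splendidness being automatic since $M$ is a $p$-permutation bimodule. One small clarification: the Brauer indecomposability of \cite{KT20} is not needed in the proof of this theorem at all --- Condition (I) is a hypothesis here, and Brauer indecomposability only enters later when verifying (I) for the concrete groups in Sections 6--7.
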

\begin{proof}
By \cite[Lemma~4.1]{KL20a}, Condition~(I) is equivalent to the fact that $M$
 induces a stable equivalence of Morita type
between $B_0(kG_1)$ and $B_0(kG_2)$. Therefore, applying  
\cite[Theorem 2.1]{Lin96a}, Condition~(II) now implies that $M$ 
induces a Morita equivalence between $B_0(kG_1)$ and $B_0(kG_2)$. 
This equivalence is necessarily splendid 
since $M$ is a $p$-permutation module by definition.  
\end{proof}

%%%%%%%%%%%%%%%%%%%
%%%%%%%%%%%%%%%%%%%%
\begin{Lemma}\label{lem:simpleSocle}
Let $A$ be a finite-dimensional $k$-algebra. Let $X$ be an $A$-module and 
let $Y$ be an $A$-submodule such that $X/Y$ and $\soc(Y)$ are both simple.
If $Y$ is not a direct summand of $X$, then $\soc(X)=\soc(Y)$, and hence $X$ is indecomposable. 
\end{Lemma}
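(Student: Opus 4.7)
The plan is to prove the first assertion by contraposition: assuming $\soc(X)\neq\soc(Y)$, I would exhibit an explicit direct complement of $Y$ in $X$. The indecomposability of $X$ would then fall out easily from the simplicity of $\soc(X)=\soc(Y)$ together with the fact that every nonzero finitely generated $A$-module has nonzero socle.

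First I would record the elementary fact that for any submodule $Y\leq X$ one has $\soc(Y)=Y\cap\soc(X)$, since $Y\cap\soc(X)$ is semisimple and contains every semisimple submodule of $Y$. Writing the semisimple module $\soc(X)$ as a direct sum of simple submodules, the hypothesis $\soc(Y)\subsetneq\soc(X)$ would then yield a simple submodule $T\leq\soc(X)$ with $T\not\leq Y$.

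Next I would use $T$ to split off $Y$. Because $T$ is simple and $T\cap Y$ is a proper submodule of $T$, we have $T\cap Y=0$. The restriction of the canonical projection $X\twoheadrightarrow X/Y$ to $T$ is therefore injective, and its image is a nonzero submodule of the simple module $X/Y$; hence the restriction is an isomorphism $T\xrightarrow{\sim} X/Y$. In particular $T+Y=X$, so together with $T\cap Y=0$ this gives $X=Y\oplus T$, contradicting the hypothesis that $Y$ is not a direct summand of $X$. This forces $\soc(X)=\soc(Y)$, which is simple by assumption.

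For the indecomposability claim, I would argue by contradiction: if $X=X_1\oplus X_2$ with both $X_i\neq 0$, then since $A$ is finite-dimensional each $X_i$ has nonzero socle, and $\soc(X)=\soc(X_1)\oplus\soc(X_2)$ would then contain at least two nonzero summands, contradicting that $\soc(X)=\soc(Y)$ is simple. I do not anticipate a genuine obstacle here; the only slightly delicate point is choosing the simple submodule $T$ inside $\soc(X)$ correctly so that the quotient map restricted to $T$ lands isomorphically onto $X/Y$, and this is automatic once $T\not\leq Y$.
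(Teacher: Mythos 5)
Your proof is correct and follows essentially the same route as the paper's: both arguments produce a semisimple complement of $\soc(Y)$ inside $\soc(X)$ that meets $Y$ trivially and then use the simplicity of $X/Y$ to force it to complement $Y$ in all of $X$. The only cosmetic difference is that you extract a single simple submodule $T\not\leq Y$ and argue contrapositively, whereas the paper keeps the whole complement $S$ with $\soc(X)=\soc(Y)\oplus S$.
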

\begin{proof}
Since $\soc(X)$ and $\soc(Y )$ are semisimple, we have $\soc(X)\cap Y = \soc(Y )$ and $\soc(X) = \soc(Y)\oplus S$ where $S$ is a submodule of $X$. Thus $S\cap Y = S\cap \soc(X)\cap Y = S\cap \soc(Y) = 0$. Hence, either $S = 0$ and $\soc(X) = \soc(Y)$, or $Y$ is a submodule of  $S\oplus Y$ and  so $S\oplus Y =X$.
%%Consider  the  short exact sequence
%%$0\longrightarrow Y\overset{i}{\longrightarrow}  X\overset{\pi}{\longrightarrow} X/Y\longrightarrow 0$, where 
%%$i$ is the canonical inclusion and $\pi$ is the quotient morphism. 
%%By \cite[I Lemma 8.5(i) and (ii)]{Lan83}, $\soc(X)=\soc(Y)$ or $\soc(X)=\soc(Y)\oplus S$ 
%%where $S$ is an $A$-submodule of $X$ such that $S\cong X/Y$. 
%%Assume now that $\soc(X)=\soc(Y)\oplus S$. First, we claim that $S\cap Y=0$. 
%%So suppose that $S\cap Y{\not=}\,0$. 
%%Obviously, $\soc(S\cap Y)\leq\soc(S)=S$, so $\soc(S\cap Y)=S\neq 0$, since $S$ is simple.
%%Hence $S\leq\soc(Y)$, which contradicts the assumption that $\soc(X)$ is the direct sum of its submodules $\soc(Y)$ and $S$, 
%%proving the claim.
%%Hence, $S\oplus Y$ is an $A$-submodule of $X$, implying that $X=S\oplus Y$ and contradicting the assumption.
\end{proof}

Finally, the next lemma is often called the ``stripping-off method''. It will be used to verify Condition (II) of Theorem~\ref{Thm:ProveExSMEQ}
in concrete cases.

\begin{Lemma}[{\cite[Lemma A.1]{KMN11}}]\label{StrippingOffMethod}
Let $A$ and $B$ be self-injective finite-dimensional $k$-algebras. 
Let $F: \mod{\text{-}}A\longrightarrow \mod{\text{-}}B$ be a covariant functor satisfying the following conditions:
 \begin{enumerate}
  \renewcommand{\labelenumi}{\rm{(C\arabic{enumi})}}
   \item $F$ is exact;
   \item if $X$ is a projective $A$-module, then $F(X)$ is a 
    projective $B$-module;
   \item $F$ realises a stable equivalence from
    $\mod{\text{-}}A$ to $\mod{\text{-}}B$.
 \end{enumerate}
Then, the following assertions hold.
\begin{enumerate}

 \item[\rm(a)]
{\sf (Stripping-off method, case of socles.)} 
  Let $X$ be a projective-free $A$-module, and write
  $F(X) = Y \oplus ({\mathrm{proj}})$ where $Y$ is a projective-free $B$-module. 
  Let $S$ be a simple $A$-submodule of $X$ and set $T := F(S)$. 
  If $T$ is a simple non-projective $B$-module, then there exists a $B$-submodule $W$ of $F(X)$
  such that $W\cong Y$,  $T\subseteq W$ and 
  \[
   F(X/S)\cong W/T \oplus ({\mathrm{proj}})\,.
   \]
 \item[\rm(b)] 
{\sf (Stripping-off method, case of radicals.)} 
  Let $X$ be a projective-free $A$-module, and write $F(X) = Y \oplus R$ where $Y$ is  a projective-free $B$-module and $R$ is a projective $B$-module. 
  Let $X'$ be an $A$-submodule of $X$ such that $X/X'$ is simple and 
  let ${\pi:X\longrightarrow X/X'}$ be the quotient homomorphism.
  If $T:= F(X/X')$ is a simple $B$-module,
  then there exists a $B$-submodule $R'$ of $F(X)$ such that $R'\cong R$, $R'\subseteq \ker(F(\pi))$, $F(X) = Y \oplus R'$
  and 
  $${\mathrm{ker}}\left(F(X) \overset{F(\pi)}{\twoheadrightarrow} F(X/X')\right) =  
{\mathrm{ker}}\left(Y \overset{F(\pi)|_{Y}}{\twoheadrightarrow }F(X/X')   \right) \oplus   ({\mathrm{proj}})\,.  $$
\end{enumerate}
\end{Lemma}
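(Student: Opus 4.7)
The strategy is to apply the exact functor $F$ to the short exact sequences arising from the containments $S \subseteq X$ in~(a) and $X' \subseteq X$ in~(b), obtaining short exact sequences in $\mod{\text{-}}B$, and then to analyse these against the decomposition $F(X) = Y \oplus (\mathrm{proj})$. The self-injectivity of $B$, ensuring that projective and injective $B$-modules coincide, plays a decisive role together with condition~(C2).

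For part~(a), applying $F$ to $0 \to S \to X \to X/S \to 0$ yields, by~(C1), a short exact sequence
\[
0 \longrightarrow T \longrightarrow F(X) \longrightarrow F(X/S) \longrightarrow 0
\]
in $\mod{\text{-}}B$. Write $F(X) = Y \oplus P$ with $P$ projective and $Y$ projective-free. The central claim is that $T \subseteq Y$, from which the result follows by setting $W := Y$ and computing the quotient by $T$ summand-wise. To prove the claim, consider the composite $T \hookrightarrow F(X) \twoheadrightarrow P$. Since $T$ is simple, this composite is either zero or injective; in the latter case, self-injectivity of $B$ makes $P$ injective, so the embedding splits and realises $T$ as a direct summand of $P$, forcing $T$ to be projective and contradicting the hypothesis. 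Therefore $T \subseteq Y$, and the remaining conclusions follow at once since $F(X/S) \cong F(X)/T = (Y/T) \oplus P$.

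For part~(b), apply $F$ to $0 \to X' \to X \xrightarrow{\pi} X/X' \to 0$ to obtain a surjection $F(\pi): F(X) = Y \oplus R \twoheadrightarrow T$, whose components I denote $\phi_Y$ and $\phi_R$. The key observation is that $T$, being a simple quotient of $F(X)$, appears as a direct summand of $\hd(F(X)) = \hd(Y) \oplus \hd(R)$. In the generic case where $\phi_R = 0$, one takes $R' := R \subseteq \ker F(\pi)$ directly. Otherwise, one exploits the projectivity of $R$ to lift $-\phi_R$ through $\phi_Y$ to a homomorphism $\psi: R \to Y$, and defines $R' := \{(\psi(r), r) : r \in R\}$, which by a direct check satisfies $R' \cong R$, $R' \subseteq \ker F(\pi)$, and $F(X) = Y \oplus R'$. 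The final equality $\ker F(\pi) = \ker(F(\pi)|_Y) \oplus (\mathrm{proj})$ then follows from the modular law applied to this new decomposition. The principal obstacle lies here: one must verify that whenever $\phi_R \neq 0$, the restriction $\phi_Y$ is surjective onto $T$ so that the lift exists. This hinges on the stable-equivalence hypothesis~(C3) together with a careful analysis of how the simple module $T$ can appear in $\hd(R)$ versus $\hd(Y)$.
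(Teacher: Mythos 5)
Your plan for part (a) breaks down at its central step. From the injectivity of the composite $T \hookrightarrow F(X) \twoheadrightarrow P$ (where $P$ denotes the projective summand) and the injectivity of $P$ you conclude that the embedding splits; this is false --- a monomorphism splits off its source only when the \emph{source} is injective, and a simple submodule of an injective module merely lies in its socle (consider $k\hookrightarrow k[x]/(x^2)$). Consequently the claim ``$T\subseteq Y$'' is not only unproved but false in general: $T$ may sit diagonally in $Y\oplus P$, which is exactly why the lemma only asserts the existence of \emph{some} complement $W\cong Y$ containing $T$. The correct argument distinguishes cases according to the component $\iota_Y\colon T\to Y$ of the inclusion. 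If $\iota_Y=0$, then $F$ applied to the inclusion $S\hookrightarrow X$ factors through the projective module $P$, so by (C3) the inclusion $S\hookrightarrow X$ factors through a projective $A$-module; since $A$ is self-injective this forces the projective--injective hull $I(S)$ to be a direct summand of $X$, contradicting projective-freeness. If $\iota_Y\neq 0$ (hence injective), one extends $\iota_P$ along $\iota_Y$ using the injectivity of $P$ to obtain $h\colon Y\to P$, and takes $W$ to be the graph of $h$; then $F(X)=W\oplus P$, $T\subseteq W$, and $F(X/S)\cong W/T\oplus P$. That your proposal never invokes (C3) nor the self-injectivity of $A$ in part (a) is a symptom of the missing idea.

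In part (b) the graph construction of $R'$ is the right device, but the step you yourself flag as ``the principal obstacle'' --- that $\phi_Y:=F(\pi)|_Y$ is nonzero, hence surjective onto the simple module $T$ --- is the actual content of the statement and is left unproved. It is settled by the same mechanism as above: if $\phi_Y=0$ then $F(\pi)$ factors through the projective module $R$, so by (C3) the map $\pi$ factors through a projective $A$-module; a surjection onto a simple module which factors through a projective forces the projective cover of $X/X'$ to split off $X$ (by Nakayama's lemma applied twice), again contradicting projective-freeness. Once $\phi_Y$ is known to be surjective, your lifting of $-\phi_R$ through $\phi_Y$ and the dimension count $\dim\ker F(\pi)=\dim\ker\phi_Y+\dim R$ do complete the proof of (b). So (b) is a viable plan with its key step outsourced, while (a) as written would fail.
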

%\begin{proof} See \cite[Lemma A.1]{KMN11}. \end{proof}

\section{Groups of type {\sf (W3(n))} and {\sf (W4(n))}}
\label{sec:W3W4}

\begin{Hypothesis}\label{Ass:p=2}%
From now on and until the end of this manuscript we assume that
the algebraically closed field $k$ has characteristic $p=2$. 
Furthermore, $G$, $G_{1}$, $G_{2}$, $\mathcal{G}_{1}$, $\mathcal{G}_{2}$, $\mathsf{G}$, $\mathsf{G}_{1}$ and $\mathsf{G}_{2}$ always denote finite groups 
with a common Sylow $2$-subgroup $P\cong C_{2^n}\wr C_2$, 
where $n\geq 2$ is a fixed integer. In other words, we choose a Sylow $2$-subgroup of each of these groups and we identify them for simplicity. 
Moreover,  $q$, $q_{1}$ and $q_{2}$ are (possibly different) positive powers of odd prime numbers.
\end{Hypothesis}

In this section and the next two ones, we prove Theorem~\ref{thm:MainResult}(b) 
through a case-by-case analysis. 
We start with the groups of types {\sf (W3(n))} and {\sf (W4(n))}, 
for which we reduce the problem to the classification of principal blocks 
with dihedral defect groups up to splendid Morita equivalence obtained in 
\cite[Theorem~1.1]{KL20a}.  
The group theory setting to keep in mind is described in the following remark.

\begin{Remark}\label{GL-GU}%
For any positive power $q$ of an odd prime number \cite[p.4]{ABG70} shows that we have the 
following inclusions of normal subgroups with the given indices:
\[
\begin{tikzcd}
   \GL_2(q) \ar[d, dash, "(q-1)/2^n"']  & &   \GU_2(q) \ar[d, dash, "(q+1)/2^n"]   \\
 \SL_2^n(q) \ar[d, dash, "2^{n}"']   &   &   \SU_2^n(q)  \ar[d, dash, "2^{n}"]   \\
  \SL_2(q)   &  \cong   &   \SU_2(q) \\
\end{tikzcd}
\]
\end{Remark}

\begin{Proposition}\label{pro:SL2^m} 
For each $i\in\{1,2\}$ let $G_i:=\SL^n_2(q_i)$, 
$\mathcal G_i:=\GL_2(q_i)$ and assume that {$(q_i-1)_2=2^n$}.
Then, the following assertions hold:
\begin{enumerate}[label={\rm(\alph*)}, leftmargin=10mm] 
\item   
${\mathrm{Sc}}(\mathcal{G}_1\times\mathcal{G}_2, \Delta P)$ induces 
a splendid Morita equivalence between $B_0(k\mathcal G_1)$ and $B_0(k\mathcal G_2)$;
\item
 ${\mathrm{Sc}}(G_1\times G_2, \Delta P)$\! 
induces a splendid Morita equivalence between $B_0(kG_1)$\! and $B_0(kG_2)$.
\end{enumerate}
\end{Proposition}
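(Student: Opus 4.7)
The plan is to settle (a) by reducing modulo the centres of $\GL_2(q_1)$ and $\GL_2(q_2)$ to the already-classified case of dihedral defect groups, and then to deduce (b) from (a) via the Alperin--Dade framework.

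For (a), I would first identify the centres $Z_i := Z(\mathcal{G}_i)$ as the groups of scalar matrices, which are cyclic of order $q_i-1$. Their intersections with $P$ are cyclic of order $(q_i-1)_2 = 2^n$ and lie inside $Z(P)$. After identifying the common Sylow $2$-subgroup $P$, both $P \cap Z_1$ and $P \cap Z_2$ should coincide with the unique central cyclic subgroup of $P$ of order $2^n$; this is a small but essential compatibility check. With this in hand, Lemma~\ref{lem:G/Z(G)} reduces the statement to its analogue for the quotients $\overline{\mathcal{G}_i} := \mathcal{G}_i/Z_i \cong \PGL_2(q_i)$, whose common Sylow $2$-subgroup $\overline{P} := P/(P \cap Z_i)$ is dihedral of order $2^{n+1} \geq 8$.

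At that point I would invoke the classification of principal $2$-blocks with dihedral defect groups from~\cite[Theorem~1.1]{KL20a}. Both $\PGL_2(q_1)$ and $\PGL_2(q_2)$ belong to the same family of that classification, namely the $\PGL_2$-family with $(q-1)_2 = 2^n$, so the Scott module $\Sc(\PGL_2(q_1) \times \PGL_2(q_2), \Delta \overline{P})$ induces a splendid Morita equivalence between their principal $2$-blocks. Lifting this back through the ``if and only if'' of Lemma~\ref{lem:G/Z(G)} yields~(a).

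For (b), by Remark~\ref{GL-GU} the index $[\mathcal{G}_i:G_i] = (q_i-1)/2^n$ is odd, so $\mathcal{G}_i/G_i$ is a $2'$-group and $P$ is also a Sylow $2$-subgroup of $G_i$. Since $\Aut(P)$ is a $2$-group by Lemma~\ref{lem:CGp5956}, Lemma~\ref{lem:Aut(P)} makes the hypotheses of Theorem~\ref{thm:AlperinDade} available in the present setting. Part~(b) of that theorem, applied to the pairs $(G_i, \mathcal{G}_i)$ for $i=1,2$, then transports the splendid Morita equivalence of~(a) to one between $B_0(kG_1)$ and $B_0(kG_2)$, induced by $\Sc(G_1 \times G_2, \Delta P)$, which is exactly what~(b) demands.

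The hardest part, I expect, is purely the bookkeeping needed to verify the hypothesis $P \cap Z_1 = P \cap Z_2$ of Lemma~\ref{lem:G/Z(G)} inside the identified Sylow $2$-subgroup. This ought to reduce to the direct observation that $C_{2^n} \wr C_2$ contains exactly one central cyclic subgroup of order $2^n$, after which the remaining steps are a clean chain of known reductions, with no further module-theoretic input specific to this case.
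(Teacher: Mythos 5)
Your proposal is correct and follows essentially the same route as the paper: reduce (a) modulo the centres via Lemma~\ref{lem:G/Z(G)} to $\PGL_2(q_i)$ with dihedral Sylow $2$-subgroup $P/Z(P)\cong D_{2^{n+1}}$ and invoke \cite[Theorem~1.1]{KL20a}, then deduce (b) from (a) using the odd index $(q_i-1)/2^n$, Lemma~\ref{lem:CGp5956} and Lemma~\ref{lem:Aut(P)}. The compatibility check $P\cap Z(\mathcal G_1)=P\cap Z(\mathcal G_2)=Z(P)$ that you flag as the delicate point is exactly the observation the paper makes as well.
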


\begin{proof}
Elementary calculations yield $G_{i}\lhd\mathcal{G}_{i}$ and $|\mathcal G_i/G_i|=({q_i}-1)/2^n$ 
for each $i~\in~\{1,2\}$  (see~Remark~\ref{GL-GU}). In particular 
both indices are odd. Hence, by Lemma~\ref{lem:CGp5956} and Lemma~\ref{lem:Aut(P)}, 
assertion (b) follows from assertion (a), so it suffices to prove (a). \par
Now, $P\cap Z(\mathcal G_1)=P\cap Z(\mathcal G_2)=Z(P)$, so 
$\overline P:=(PZ(\mathcal G_1))/Z(\mathcal G_1)\cong (PZ(\mathcal G_2))/Z(\mathcal G_2)\,,$
and hence, up to identification, we can consider that 
$\overline P\in\Syl_2(\mathcal G_1/Z(\mathcal G_1))\cap\Syl_2(\mathcal G_2/Z(\mathcal G_2))$.
Moreover, we have 
$$\overline P\cong P/Z(P)\cong D_{2^{n+1}}\,,$$
see e.g. \cite[(2.A)~Lemma~(iii)]{Kue80}.
Since $\mathcal G_i/Z(\mathcal G_i)\cong \PGL_2(q_i)$ for each $i\in\{1,2\}$, assertion (a) now follows
directly  from Lemma~\ref{lem:G/Z(G)} and \cite[Theorem 1.1]{KL20a}.
\end{proof}

\begin{Proposition}\label{pro:SU2^m}% 
For each $i\in\{1,2\}$ let $G_i:=\SU^n_2(q_i)$, $\mathcal G_i:=\GU_2(q_i)$ and assume that $(q_i+1)_2=2^n$. Then, the following assertions hold:
\begin{enumerate}[label={\rm(\alph*)}, leftmargin=10mm] 
\item   
${\mathrm{Sc}}(\mathcal{G}_1\times\mathcal{G}_2, \Delta P)$ induces 
a splendid Morita equivalence between $B_0(k\mathcal G_1)$ and $B_0(k\mathcal G_2)$;
\item
 ${\mathrm{Sc}}(G_1\times G_2, \Delta P)$\! 
induces a splendid Morita equivalence between $B_0(kG_1)$\! and $B_0(kG_2)$.
\end{enumerate}
\end{Proposition}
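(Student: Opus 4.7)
The plan is to mirror the strategy of Proposition~\ref{pro:SL2^m} almost verbatim, exploiting the well-known isomorphism $\PGU_2(q)\cong\PGL_2(q)$ in order to reduce the problem, via central quotients, to the dihedral case handled in \cite[Theorem~1.1]{KL20a}. First, from Remark~\ref{GL-GU} we read off that $G_i\trianglelefteq\mathcal G_i$ with index $|\mathcal G_i/G_i|=(q_i+1)/2^n$, which is odd by the hypothesis $(q_i+1)_2=2^n$. Since $\Aut(P)$ is a $2$-group by Lemma~\ref{lem:CGp5956}, Lemma~\ref{lem:Aut(P)} reduces the proof of (b) to that of (a), exactly as in Proposition~\ref{pro:SL2^m}.

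For assertion~(a) I would first identify $P\cap Z(\mathcal G_i)$ with $Z(P)$: the $2$-part of the center $Z(\mathcal G_i)=Z(\GU_2(q_i))$ (scalar matrices with norm $1$, of order $(q_i+1)$) has order $(q_i+1)_2=2^n$, which coincides with the order of the cyclic centre $Z(P)$ of the wreathed $2$-group $P$. Since $Z(P)\leq P\cap Z(\mathcal G_i)$, equality follows on order grounds. Hence setting
\[
\overline P:=(PZ(\mathcal G_1))/Z(\mathcal G_1)\cong(PZ(\mathcal G_2))/Z(\mathcal G_2),
\]
we obtain a common Sylow $2$-subgroup of $\mathcal G_1/Z(\mathcal G_1)$ and $\mathcal G_2/Z(\mathcal G_2)$, which by \cite[(2.A)~Lemma~(iii)]{Kue80} is isomorphic to $P/Z(P)\cong D_{2^{n+1}}$.

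Finally I would invoke the isomorphism $\mathcal G_i/Z(\mathcal G_i)=\PGU_2(q_i)\cong\PGL_2(q_i)$ (both groups have the same order $q_i(q_i-1)(q_i+1)$ and are well-known to be isomorphic as abstract groups), so that Lemma~\ref{lem:G/Z(G)} transports the question of a splendid Morita equivalence between $B_0(k\mathcal G_1)$ and $B_0(k\mathcal G_2)$ to the question of a splendid Morita equivalence between $B_0(k\PGL_2(q_1))$ and $B_0(k\PGL_2(q_2))$ induced by ${\mathrm{Sc}}(\PGL_2(q_1)\times\PGL_2(q_2),\Delta\overline P)$; the latter is supplied by \cite[Theorem~1.1]{KL20a}, since $\overline P$ is dihedral and the hypotheses there are met. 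The only point that requires a moment's care (and arguably the main obstacle) is the verification that $P\cap Z(\mathcal G_i)=Z(P)$ in the unitary setting, so that Lemma~\ref{lem:G/Z(G)} is applicable; everything else is a direct translation of the $\SL/\GL$ argument.
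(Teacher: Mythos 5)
Your proposal is correct and follows essentially the same route as the paper's proof: reduce (b) to (a) via Lemma~\ref{lem:CGp5956} and Lemma~\ref{lem:Aut(P)}, identify $P\cap Z(\mathcal G_i)=Z(P)$ so that $\overline P\cong P/Z(P)\cong D_{2^{n+1}}$, and then pass through $\PGU_2(q_i)\cong\PGL_2(q_i)$ to apply Lemma~\ref{lem:G/Z(G)} together with \cite[Theorem~1.1]{KL20a}. The only cosmetic differences are that the paper justifies $\PGU_2(q_i)\cong\PGL_2(q_i)$ via $\SU_2(q_i)\cong\SL_2(q_i)$ and Steinberg's description of $\PGL_2(q_i)=\PSL_2(q_i).2$ rather than by appeal to it being ``well known,'' and that your containment $Z(P)\leq P\cap Z(\mathcal G_i)$ is more naturally obtained by noting that the $2$-part of $Z(\mathcal G_i)$, of order $2^n$, lies in $P$ and centralises it, while the reverse containment $P\cap Z(\mathcal G_i)\leq Z(P)$ is automatic.
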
 
\begin{proof}
In this case $G_{i}\lhd\mathcal{G}_{i}$ and 
$|\mathcal G_i/G_i|=(q_i+1)/2^n$ for each $i\in\{1,2\}$ (See~Remark~\ref{GL-GU}). 
Thus both indices are odd. 
Again by Lemma~\ref{lem:CGp5956} and Lemma~\ref{lem:Aut(P)}, it suffices to prove (a).
\par
Now, $P\cap Z(\mathcal G_1)=P\cap Z(\mathcal G_2)=Z(P)$. Thus 
$\overline P:=(PZ(\mathcal G_1))/Z(\mathcal G_1)
\cong (PZ(\mathcal G_2))/Z(\mathcal G_2)$
and we can consider that 
$\overline P\in\Syl_2(\mathcal G_1/Z(\mathcal G_1))\cap\Syl_2(\mathcal G_2/Z(\mathcal G_2))$.
As in the previous proof, 
$$\overline P\cong P/Z(P)\cong D_{2^{n+1}}\,.$$  
Next,  for each for $i\in\{1,2\}$ we have an isomorphism $\SU_2(q_i)\cong\SL_2(q_i)$,  
%(see \cite[p.66]{Wil09})
and hence $\PSU_2(q_i)\cong\PSL_2(q_i)$. Furthermore, since $q_i$ is odd, 
$\PGL_2(q_i)=\PSL_2(q_i).2$ (where $2$ denotes the cyclic group of order $2$ generated by
the diagonal automorphism of $\PSL_2(q_i)$) by Steinberg's result
(see \cite[Chap.6 (8.8), p.\,511 and Theorem 8.11]{Suz86}). 
In other words, we have
\begin{equation}\label{PGL-PGU} \qquad
\mathcal G_i/Z(\mathcal G_i)=\PGU_2(q_i)\cong\PGL_2(q_i)\,.
\end{equation}
Therefore, assertion (a) follows immediately from Lemma~\ref{lem:G/Z(G)}
and~\cite[Theorem~1.1]{KL20a}, proving the Proposition.
\end{proof}

%%%%%%%%%%%%%%%%%newpage

\section{Groups of type {\sf (W5(n))}}
\label{sec:W5}

We now turn to the groups of type {\sf (W5(n))}.  
We continue using Hypothesis~\ref{Ass:p=2}.

\begin{Notation}\label{nota:PSL_{3}}%
Throughout this section we let  $i\in\{1,2\}$ be arbitrary and set  $G_{i}:=\PSL_{3}(q_{i})$, $\mathsf G_{i}:=\SL_{3}(q_{i})$ and $\widetilde{G}_{i}:=\GL_{3}(q_{i})$ where we assume that $(q_{i}-1)_{2}=2^{n}$. After identification, we may assume that  $G_{1}$, $G_{2}$, $\mathsf G_{1}$ and $\mathsf G_{2}$ have a common Sylow $2$-subgroup $P$ isomorphic to $C_{2^n}\wr C_2$. 
Then,
\begin{equation}\label{sm:PSLSLGL}
B_{0}(kG_{i})   \sim_{SM}   B_{0}(k\mathsf{G}_{i}) 
\end{equation}
where the splendid Morita equivalence is induced by inflation (as $Z(\SL_{3}(q_{i}))\cong C_{(3,q_{i}-1)}$ is a $2'$-group).
%%Moreover, it follows from Clifford theory that to any simple $kG_{i}$-module $R$ belonging to $B_{0}(kG_{i})$ corresponds a simple $B_{0}(k\widetilde{G}_{i})$-module, which we denote by $\widetilde{R}$,  such that 
%%$$\Inf_{G_{i}}^{\mathsf{G}_{i}}(R)=\Res^{\widetilde{G}_{i}}_{\mathsf{G}_{i}}(\widetilde{R})\,.$$ 
Using~\cite[Proposition~4.3.1 and Remark~4.2.1]{GM20} we know that $B_{0}(k\mathsf{G}_{i})$ contains three unipotent characters, namely
$$1_{\mathsf{G}_{i}}, \chi^{}_{q_{i}^2+q_{i}}, \chi^{}_{q_{i}^3}\,,$$ 
where we use the convention that the indices denote the degrees, whereas those lying in $B_{0}(k\widetilde{G}_{i})$ can be written as 
$$1_{\widetilde{G}_{i}}, \widetilde \chi^{}_{q_{i}^2+q_{i}}, \widetilde \chi^{}_{q_{i}^3}\,$$ 
and satisfy $1_{\widetilde{G}_{i}}{\downarrow}_{\mathsf{G}_{i}}=1_{\mathsf{G}_{i}}$,  $\widetilde \chi^{}_{q_{i}^2+q_{i}}{\downarrow}_{\mathsf{G}_{i}}=\chi^{}_{q_{i}^2+q_{i}}$ and $\widetilde \chi^{}_{q_{i}^3}{\downarrow}_{\mathsf{G}_{i}}=\chi^{}_{q_{i}^3}$.  (We also refer to \cite{Ste51}, \cite[7.19. Theorem(i)]{Jam86}, that first described these characters and their degrees.)\\
We obtain from \cite[\S4]{Jam90} and the above that $3=\ell(B_{0}(k\widetilde{G}_{i}))=\ell(B_{0}(k\mathsf{G}_{i}))$ and we may write
$$\Irr_{k}(B_{0}(k\mathsf{G}_{i}))=:\{k_{\mathsf{G}_{i}}, S_{i}, T_{i}\}\quad\text{ and }\quad \Irr_{k}(B_{0}(k\widetilde{G}_{i}))=:\{k_{\widetilde{G}_{i}}, \widetilde{S}_{i}, \widetilde{T}_{i}\}\,,$$
where  $S_i=\widetilde S_i{\downarrow}_{\mathsf{G}_{i}}$ and $T_i=\widetilde T_i{\downarrow}_{\mathsf{G}_i}$.
Moreover, by \cite[p.\,253]{Jam90},  the part of the $2$-decomposition matrix of $B_{0}(k\widetilde{G}_{i})$ whose rows are labelled by 
the unipotent characters is as follows:
\[
\begin{array}{l|ccc} & k_{\widetilde{G}_{i}} & \widetilde S_{i} &  \widetilde T_{i}\\ \hline
1_{\widetilde{G}_{i}} & 1& .&. \\ 
\widetilde\chi^{}_{q_{i}^2+q_{i}}& . &1 & .\\
\widetilde\chi^{}_{q_{i}^3} & 1& . & 1 \\
\end{array}
\]
(This is the case $\Delta_3$ with $n=3$, $e=2$ and $p\geq 2$.)
\end{Notation}

We start by describing some trivial source modules belonging to the principal $2$-block of $\SL_{3}(q_{i})$ which we will use in the sequel. 

\begin{Lemma}\label{lem:tsPSL_{3}(q)}%
The principal block $B_{0}(k\mathsf{G}_{i})$ contains, amongst others, the following trivial source modules:
\begin{enumerate}
\item[\rm(a)] the trivial module $k_{\mathsf{G}_{i}}$, with vertex $P$ and affording the trivial character $1_{ \mathsf{G}_{i} }$\,;
\item[\rm(b)] the simple module~$S_{i}$, having  $Q:=C_{2^n}\times C_{2^n}\leq P$ as a vertex, and affording the character $\chi^{}_{q_{i}^2+q_{i}}$\,;
\item[\rm(c)] the Scott module $\Sc(\mathsf{G}_{i},Q)$ with vertex $Q$, satisfying $Sc(\mathsf{G}_{i},Q)\ncong S_{i}$;
\item[\rm(d)] the Scott module $\Sc(\mathsf{G}_{i},\mathbb B_{i})$ on a Borel subgroup $\mathbb B_{i}$ of $\mathsf G_{i}$,
which is uniserial with composition series 
$$
\boxed{\begin{matrix}k_{\mathsf{G}_{i}}\\ T_{i}\\ k_{\mathsf{G}_{i}}\end{matrix}}
$$ 
and affords the character $1_{\mathsf{G}_{i}}+\chi_{q_{i}^3}$.
\end{enumerate}
\end{Lemma}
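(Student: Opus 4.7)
The plan is to verify assertions (a)--(d) one by one, combining the decomposition matrix displayed in Notation~\ref{nota:PSL_{3}} with standard Harish--Chandra theory and general facts about Scott modules. Part (a) is immediate: $k_{\mathsf G_i}$ is simple, has $P$ as a vertex, and lifts to $1_{\mathsf G_i}$. For (b), the decomposition matrix identifies $S_i$ as the unique simple $B_0(k\mathsf G_i)$-module lifting the unipotent character $\chi_{q_i^2+q_i}$, and I would realise $S_i$ as the non-trivial Harish--Chandra constituent of the principal series of $\mathsf G_i$, so that $S_i$ is a direct summand of $k_{\mathbb B_i}{\uparrow}^{\mathsf G_i}$; this makes $S_i$ a trivial source module with vertex contained in the Sylow $2$-subgroup $Q$ of $\mathbb B_i$, and defect/dimension considerations ($\dim S_i=q_i(q_i+1)$ has $2$-part equal to $|P:Q|=2$) pin the vertex down to~$Q$. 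For (c), $\Sc(\mathsf G_i,Q)$ has a vertex contained in $Q$ by construction, and one sees that $Q$ is $\mathcal F_P(\mathsf G_i)$-fully normalised (being the Sylow $2$-subgroup of a maximal torus, so that $N_{\mathsf G_i}(Q)$ controls fusion on $Q$), hence $\Sc(\mathsf G_i,Q)(Q)\ne 0$ and the vertex equals~$Q$; non-isomorphism with $S_i$ is transparent since $\hd(\Sc(\mathsf G_i,Q))=k_{\mathsf G_i}\ne S_i=\hd(S_i)$.

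For (d), I would start from the Harish--Chandra computation
\[
k_{\mathbb B_i}{\uparrow}^{\mathsf G_i}\ \text{affords}\ 1_{\mathsf G_i}+2\chi_{q_i^2+q_i}+\chi_{q_i^3},
\]
where the coefficients are the fake degrees of the irreducible representations of $W=\mathfrak S_3$. The decomposition of $k_{\mathbb B_i}{\uparrow}^{\mathsf G_i}$ into indecomposable summands is governed by the Hecke algebra $\End_{k\mathsf G_i}(k_{\mathbb B_i}{\uparrow}^{\mathsf G_i})\cong H_k(\mathfrak S_3,q_i)$: the ``trivial'' primitive idempotent cuts out the Scott summand $\Sc(\mathsf G_i,\mathbb B_i)$, which then carries the characters $1_{\mathsf G_i}$ and $\chi_{q_i^3}$ and hence, by the decomposition matrix, has Brauer composition factors $2\times k_{\mathsf G_i}+T_i$. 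Self-duality of $\Sc(\mathsf G_i,\mathbb B_i)$ (as a summand of a self-dual permutation module) then forces $\soc=\hd=k_{\mathsf G_i}$, and the three composition factors must arrange into the announced uniserial shape $k_{\mathsf G_i}/T_i/k_{\mathsf G_i}$.

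The main obstacle is showing cleanly in (d) that $\chi_{q_i^2+q_i}$ drops out of $\Sc(\mathsf G_i,\mathbb B_i)$: the Hecke-algebra decomposition above is the conceptual route, but one could alternatively invoke the theory of unipotent Young modules for $\GL_n(q)$ in non-describing characteristic, or compute the Brauer construction $\Sc(\mathsf G_i,\mathbb B_i)(Q)$ directly and compare it with the Scott module over the Brauer correspondent $k N_{\mathsf G_i}(Q)$. Once the character and composition factors are in place, the uniserial structure follows mechanically from self-duality and the three-term Loewy count, so the whole statement reduces to this one character-theoretic identification.
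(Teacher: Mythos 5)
Your proposal reaches the right conclusions, but on the two substantive parts (b) and (d) it takes a genuinely different route from the paper. The paper works in $\widetilde{G}_{i}=\GL_{3}(q_{i})$ (restriction to $\SL_3(q_i)$ preserves the trivial source property and the relevant characters) and relies on Steinberg's explicit permutation characters together with the formula $\dim_k\Hom(M,N)=\langle\chi_M,\chi_N\rangle$ for trivial source modules \cite[II Theorem 12.4(iii)]{Lan83}. For (b) it induces from the maximal parabolic $\widetilde H_{i}\cong (C_{q_i}\times C_{q_i})\rtimes\GL_2(q_i)$, whose permutation character $1+\widetilde\chi_{q_i^2+q_i}$ is multiplicity free, so the permutation module splits as $k\oplus\widetilde S_i$ immediately; the vertex is then read off from the values of $\chi_{q_i^2+q_i}$ at $2$-elements via \cite[II Lemma 12.6(iii)]{Lan83} --- note that the parabolic has odd index and so gives no upper bound on the vertex, which is why your detour through $k_{\mathbb B_i}{\uparrow}^{\mathsf G_i}$ (upper bound) plus the $2$-part-of-the-dimension bound (lower bound) is a legitimate alternative, though it front-loads the Hecke-algebra analysis that the paper only needs for (d). For (d) the paper computes $\dim_k\Hom(Y_i,\widetilde S_i)=\langle 1+2\widetilde\chi_{q_i^2+q_i}+\widetilde\chi_{q_i^3},\,\widetilde\chi_{q_i^2+q_i}\rangle=2$ to peel off $\widetilde S_i\oplus\widetilde S_i$ by hand, leaving a trivial source summand $U_i$ affording $1+\widetilde\chi_{q_i^3}$, and then pins down the uniserial shape from $\dim_k\End(U_i)=2$ and $\dim_k\Hom(k,U_i)=\dim_k\Hom(U_i,k)=1$. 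This elementary Hom-counting is exactly what resolves the step you single out as the main obstacle (that $\chi_{q_i^2+q_i}$ drops out of the Scott summand): your route via $\End_{k\mathsf G_i}(k_{\mathbb B_i}{\uparrow}^{\mathsf G_i})\cong H_k(\mathfrak S_3,q_i)\cong k\mathfrak S_3$ does work and is more conceptual, but it imports the Dipper--James compatibility between primitive idempotents of the Hecke algebra, their ordinary constituents and the characters of the corresponding summands, which is heavier machinery than the problem requires. Two smaller remarks: in (c) no fusion-system argument is needed, since $\Sc(\mathsf G_i,Q)$ has vertex exactly $Q$ for any $p$-subgroup $Q$ by \cite[Corollary 4.8.5]{NT89}; and in (d) your appeal to ``self-duality and the Loewy count'' should be backed up by the fact that $k_{\mathsf G_i}$ occurs exactly once in the head and socle of the Scott module, since otherwise one must still rule out $T_i$ appearing in the socle --- this is precisely what the paper's computation $\dim_k\Hom(U_i,k_{\widetilde G_i})=1$ together with indecomposability delivers.
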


\begin{proof}
First we note that it is clear that all the given modules belong to the principal block as at least one of their constituents obviously does. \\
{\rm(a)} 
It is clear that the trivial module is a trivial source module with vertex $P$ affording the trivial character.\\
{\rm(b)}  
As the restriction of a trivial source module is always a trivial source module, to prove that $S_{i}$ is a trivial source module affording $\chi^{}_{q_{i}^2+q_{i}}$, it is enough to prove that the 
$k\widetilde{G}_{i}$-module $\widetilde{S}_{i}$ is a trivial source module affording  $\widetilde\chi^{}_{q_{i}^2+q_{i}}$. (See e.g. \cite[\S4]{Las23} for these properties.)
Now,  \cite[pp.\,228--229]{Ste51} shows that $1_{\widetilde{G}_{i}} + \widetilde\chi_{q^2+q}$ is a permutation character. More precisely
there exists a subgroup $\widetilde H_{i}\leq \widetilde G_{i}$
such that $\widetilde H_{i}\cong (C_{q_{i}}\times C_{q_{i}})\rtimes\GL_2(q_{i})$, $|\widetilde G_{i}:\widetilde H_{i}|=1+q_{i}+q_{i}^2$ and 
${1_{\widetilde H_{i}}}{\uparrow}^{\widetilde G_{i}}= 1_{\widetilde{G}_{i}} + \widetilde\chi_{q_{i}^2+q_{i}}$\,.
Thus, setting $X_{i}:=k_{\widetilde H_{i}}{\uparrow}^{\widetilde G_{i}}$, the decomposition matrix given in Notation~\ref{nota:PSL_{3}} implies that
$$X_{i} = k_{\widetilde G_{i}}+\widetilde S_{i} \text{ (as composition factors)}\,.$$ 
Then $X_{i}=k_{\widetilde G_{i}}\oplus \widetilde S_{i}$  as $k_{G_{i}}$ must 
occur as a composition factor of the socle and of the head, proving that $\widetilde S_{i}$ is a trivial source module affording the character $\widetilde\chi^{}_{q_{i}^2+q_{i}}$.  
Finally, using  \cite[II Lemma 12.6(iii)]{Lan83} and the character table of $\SL_{3}(q_{i})$ in~\cite{SF73} we can read from the values of the character 
$\chi^{}_{q_{i}^2+q_{i}}$ at non-trivial $2$-elements that $Q=C_{2^n}\times C_{2^n}\leq P$ is a vertex of $S_i$. \\
{\rm(c)} 
The Scott module $\Sc(\mathsf{G}_{i},Q)$ is a trivial source module with vertex~$Q$ 
and clearly $S_{i}\ncong \Sc(\mathsf{G}_i,Q)$, as a Scott module always has 
a trivial constituent in its head by definition.\\
{\rm(d)} 
\cite[pp.\,228--229]{Ste51} also shows that  $1_{\widetilde{G}_{i}}+2\widetilde\chi_{q_{i}^2+q_{i}}+\widetilde\chi_{q_{i}^3}$ is a permutation character.
More precisely,  there is a Borel subgroup $\widetilde{\mathbb B}_{i}\leq\widetilde G_{i}$  such that
%such that $\mathbb B_{i}\cong(C_{q_{i}}\times C_{q_{i}})\rtimes(C_{q_{i}-1}\times C_{q_{i}-1})$ 
$1_{\widetilde{\mathbb B}_{i}}{\uparrow}^{\widetilde G} 
= 1_{\widetilde{G}_{i}} +2\widetilde\chi_{q_{i}^2+q_{i}}+\widetilde\chi_{q_{i}^3}$.
%(see also \cite{His90}).
Setting 
$Y_{i}:={k_{\widetilde{\mathbb B}_{i}}}{\uparrow}^{\widetilde G_{i}}$ we obtain from the decomposition matrix in Notation~\ref{nota:PSL_{3}}  that 
$$Y_{i} = 2\times k_{\widetilde G_{i}} + 2\times\widetilde S_{i}+ \widetilde T_{i}\quad \text{(as
composition factors).}$$
As both $Y_{i}$ and $\widetilde S_{i}$ are trivial source modules, we have
$$\dim_{k}\,\Hom_{k\widetilde {G}_{i}}(Y_{i},\widetilde {S}_{i})
=\dim_{k}\,\Hom_{k\widetilde{G}_{i}}(\widetilde{S}_{i}, Y_{i})
=\langle 1_{\widetilde G_{i}}  +2\widetilde\chi_{q_{i}^2+q_{i}}
+\widetilde\chi_{q_{i}^3}, \widetilde\chi_{q_{i}^{2}+q_{i}} \rangle^{}_{\widetilde G_{i}} = 2$$
(see~\cite[II Theorem 12.4(iii)]{Lan83}), implying that  $\widetilde S_{i}\oplus  \widetilde  S_{i}\mid \soc(Y_{i})$ and $\widetilde  S_{i}\oplus \widetilde  S_{i}\mid  \hd(Y_{i})$. 
Thus, there exists a submodule $U_{i}$ of $Y_{i}$ such that  $Y_{i}\cong \widetilde S_{i}\oplus \widetilde S_{i}\oplus U_{i}$
and hence $U_{i}$ is a trivial source module with composition factors
$2\times k_{\widetilde G_{i}}+T_{i}$ and $U_{i}$ affords the ordinary character $1_{\widetilde{G}_{i}}+\widetilde\chi_{q_{i}^3}$. Applying~\cite[II Theorem 12.4(iii)]{Lan83} again, we get 
$$\dim_k\,\Hom_{k\widetilde G}(U_{i},U_{i})=\langle 1_{\widetilde{G}_{i}}+\widetilde\chi_{q_{i}^3},  1_{\widetilde{G}_{i}}+\widetilde\chi_{q_{i}^3}  \rangle^{}_{\widetilde G_{i}} =2$$
and 
$$\dim_k\,\Hom_{k\widetilde G_{i}}(k_{\widetilde G_{i}}, U_{i})=\langle 1_{\widetilde{G}_{i}}, 1_{\widetilde{G}_{i}}+\widetilde\chi_{q_{i}^3}  \rangle^{}_{\widetilde G_{i}} =1
=\dim_k\,\Hom_{k\widetilde G}(U_{i}, k_{\widetilde G_{i}})\,.$$
It follows that 
\begin{equation*}\label{Scott}
U_{i} =
\boxed{\begin{matrix} k_{\widetilde G_{i}}\\ \widetilde T \\ k_{\widetilde G_{i}}
\end{matrix}}
= \Sc(\widetilde G_{i}, \widetilde{\mathbb B}_{i})
\end{equation*}
and setting ${\mathbb B}_{i}:=\widetilde{\mathbb B}_{i}\cap \mathsf{G}_{i}$ yields assertion (d).
\end{proof}

\noindent We can now prove Theorem~\ref{thm:MainResult}(b) for  the groups of types {\sf (W5(n))}. 

\begin{Proposition}\label{prop:PSL3Puig} 
The Scott module ${\mathrm{Sc}}(G_1\times G_2, \Delta P)$ induces a splendid 
Morita equivalence between $B_0(kG_1)$ and $B_0(kG_2)$. 
\end{Proposition}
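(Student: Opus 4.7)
The plan is to apply the gluing method of Theorem~\ref{Thm:ProveExSMEQ} to $\mathsf{G}_1=\mathrm{SL}_3(q_1)$ and $\mathsf{G}_2=\mathrm{SL}_3(q_2)$, and then descend to $G_1,G_2$. Since $Z(\mathsf{G}_i)\cong C_{(3,q_i-1)}$ is a $2'$-group, \eqref{sm:PSLSLGL} together with Lemma~\ref{lem:G/Op'(G)}(b) reduces the statement to showing that $M:=\mathrm{Sc}(\mathsf{G}_1\times\mathsf{G}_2,\Delta P)$ induces a splendid Morita equivalence between $B_0(k\mathsf{G}_1)$ and $B_0(k\mathsf{G}_2)$. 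To apply Theorem~\ref{Thm:ProveExSMEQ} I also need $\mathcal F_P(\mathsf G_1)=\mathcal F_P(\mathsf G_2)$, which depends only on the fixed value $(q_i-1)_2=2^n$ (a consequence of the Alperin--Brauer--Gorenstein analysis recalled in Section~\ref{sec:wreathedSylow}), and Brauer indecomposability of $M$, which is precisely the main theorem of \cite{KT20} for Scott modules with wreathed vertex.

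For Condition~(I) of Theorem~\ref{Thm:ProveExSMEQ}: every involution in $\mathsf G_i=\mathrm{SL}_3(q_i)$ is conjugate to $\mathrm{diag}(1,-1,-1)$, so $C_{\mathsf G_i}(Q)\cong\mathrm{GL}_2(q_i)$ for every subgroup $Q\leq P$ of order~$2$. Brauer indecomposability of $M$ together with the standard Brauer-construction identity for Scott modules gives $M(\Delta Q)\cong\mathrm{Sc}(C_{\mathsf G_1}(Q)\times C_{\mathsf G_2}(Q),\Delta C_P(Q))$, and then Proposition~\ref{pro:SL2^m}(a) applied at the $\mathrm{GL}_2$-level supplies the required Morita equivalence between $B_0(kC_{\mathsf G_1}(Q))$ and $B_0(kC_{\mathsf G_2}(Q))$.

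For Condition~(II): let $F:=-\otimes_{B_0(k\mathsf G_1)}M$. By Notation~\ref{nota:PSL_{3}}, $B_0(k\mathsf G_i)$ has exactly three simples $k_{\mathsf G_i}$, $S_i$, $T_i$. Since the Scott module $M$ has the trivial bimodule as a quotient, $F(k_{\mathsf G_1})\cong k_{\mathsf G_2}$. For $S_1$, Lemma~\ref{lem:tsPSL_{3}(q)}(b) identifies $S_i$ as the unique simple trivial source $B_0(k\mathsf G_i)$-module with vertex $C_{2^n}\times C_{2^n}$ and ordinary character $\chi^{}_{q_i^2+q_i}$; because $F$ preserves trivial source modules ($M$ being a $p$-permutation bimodule) and their vertex behaviour is controlled by Condition~(I), one deduces $F(S_1)\cong S_2$. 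Finally, for $T_1$, I would apply the stripping-off Lemma~\ref{StrippingOffMethod} to the Scott module $\mathrm{Sc}(\mathsf G_1,\mathbb B_1)=\bigl[\begin{smallmatrix}k_{\mathsf G_1}\\ T_1\\ k_{\mathsf G_1}\end{smallmatrix}\bigr]$ of Lemma~\ref{lem:tsPSL_{3}(q)}(d): its image $F(\mathrm{Sc}(\mathsf G_1,\mathbb B_1))$ is a trivial source $B_0(k\mathsf G_2)$-module with vertex $\mathbb B_1\cap P$ and ordinary character $1_{\mathsf G_2}+\chi^{}_{q_2^3}$, hence isomorphic to $\mathrm{Sc}(\mathsf G_2,\mathbb B_2)=\bigl[\begin{smallmatrix}k_{\mathsf G_2}\\ T_2\\ k_{\mathsf G_2}\end{smallmatrix}\bigr]$ by the classification of trivial source modules by vertex and ordinary character. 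Peeling the trivial socle via Lemma~\ref{StrippingOffMethod}(a) and then the trivial head via~(b), while using the already established $F(k_{\mathsf G_1})\cong k_{\mathsf G_2}$, then forces $F(T_1)\cong T_2$.

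The main obstacle is Condition~(II) for the simple module $T_1$: unlike $k_{\mathsf G_i}$ and $S_i$, the module $T_i$ is not the reduction modulo~$2$ of an irreducible ordinary character in $B_0$, since it appears only as a composition factor of $\chi^{}_{q_i^3}$, so the direct character-theoretic matching is unavailable. The Borel Scott module $\mathrm{Sc}(\mathsf G_i,\mathbb B_i)$, whose uniserial structure $\bigl[\begin{smallmatrix}k\\ T_i\\ k\end{smallmatrix}\bigr]$ is worked out in Lemma~\ref{lem:tsPSL_{3}(q)}(d), serves as a bridge: it affords an ordinary character common to the two groups and carries $T_i$ as its middle composition factor. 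The delicate part is to run Lemma~\ref{StrippingOffMethod} in both directions while ensuring no projective summand creeps in, so that $F(T_1)$ is literally simple rather than merely simple modulo a projective part, as required by Condition~(II).
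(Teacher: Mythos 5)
Your proposal follows essentially the same route as the paper: reduce from $\PSL_3$ to $\SL_3$, apply the gluing Theorem~\ref{Thm:ProveExSMEQ}, verify Condition~(I) via the $\GL_2$-centralizers using Proposition~\ref{pro:SL2^m}(a) together with the Brauer indecomposability of $M$ from \cite{KT20}, and verify Condition~(II) by transporting the trivial source modules $S_i$ and $\Sc(\mathsf G_i,\mathbb B_i)$ and then stripping off. The one place where your argument as written does not close is the identification of the images: you propose to recognise $F(S_1)$ and $F(\Sc(\mathsf G_1,\mathbb B_1))$ by their ordinary characters, but the ordinary character afforded by the image of a module under $F$ is not known a priori, so matching characters essentially presupposes the answer. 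The paper pins down $F(S_1)$ by a counting argument instead: since $N_{\mathsf G_i}(Q)/Q\cong\mathfrak S_3$, there are exactly two indecomposable trivial source modules with vertex $Q=C_{2^n}\times C_{2^n}$ in play, namely $\Sc(\mathsf G_i,Q)$ and $S_i$; as $F(S_1)$ is indecomposable, non-projective and trivial source with vertex $Q$ by \cite[Theorem~2.1(a) and Lemma~3.4(b)]{KL20a}, and $\Sc(\mathsf G_2,Q)$ is already accounted for as the image of $\Sc(\mathsf G_1,Q)$, one concludes $F(S_1)\cong S_2$. Similarly, $F(\Sc(\mathsf G_1,\mathbb B_1))\cong\Sc(\mathsf G_2,\mathbb B_2)\oplus(\mathrm{proj})$ is obtained directly from the fact that such functors send Scott modules to Scott modules up to projective summands (\cite[Lemma~3.4(c)]{KL20a}), using only that $\mathbb B_1$ and $\mathbb B_2$ have isomorphic Sylow $2$-subgroups; no character computation is needed. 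With these two substitutions, your stripping-off treatment of $T_1$ and the final appeal to indecomposability of $F(T_1)$ to exclude a projective summand coincide with the paper's proof.
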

\begin{proof} Below $i\in\{1,2\}$. First, we observe that by Lemma~\ref{lem:G/Z(G)}, ${\mathrm{Sc}}(G_1\times G_2, \Delta P)$ induces a splendid 
Morita equivalence between the principal blocks $B_0(kG_1)$ and $B_0(kG_2)$ if and only if 
$\Sc(\mathsf{G}_1\times \mathsf{G}_2, \Delta P)=:M$ induces a splendid Morita equivalence between 
$B_{1}:=B_0(k\mathsf{G}_1)$ and $B_{2}:=B_0(k\mathsf{G}_2)$. Thus, we may work with $\mathsf{G}_{i}$ instead of $G_{i}$ (for $i\in\{1,2\}$). 
Now, observe that 
$\mathcal F_P(\mathsf{G}_1)=\mathcal F_P(\mathsf{G}_2)$  and 
all involutions in $\mathsf{G}_i$ are $\mathsf{G}_i$-conjugate (see e.g. \cite[Theorem~5.3]{CG12} 
and \cite[Proposition~2 on p.11]{ABG70}).  Thus, it follows that it now suffices to prove that Conditions
(I) and (II) of Theorem~\ref{Thm:ProveExSMEQ} \smallskip hold.\\
{\bf Condition (I)}. 
By the above we only need to consider one involution in $P$, so we choose an involution $z\in Z(P)$,
and set $C_i:=C_{\mathsf{G}_i}(z)$. Clearly, $C_{i}\cong\GL_2(q_i)$ and again, 
up to identification, we see $P\in\Syl_2(\mathcal G_1)\cap\Syl_2(\mathcal G_2)$ (see~Remark~\ref{GL-GU}). 
We have to prove that $M(\Delta\langle z\rangle)$ 
induces a Morita equivalence between $B_0(kC_1)$ and $B_0(kC_2)$.  
Now, recall that $M_{z}:=\Sc(C_1\times C_2,\Delta P)$ induces a splendid Morita equivalence between $B_0(kC_1)$ and $B_0(kC_2)$
by Proposition~\ref{pro:SL2^m}(a). 
Moreover, obviously, it is always true that $M_z\,|\,M(\Delta\langle z\rangle)$,
and  we obtain that equality $M_z=M(\Delta\langle z\rangle)$  holds by the Brauer indecomposability 
of $M$ proved in ~\cite[Theorem~1.1]{KT20}. 
Thus Condition (I) is \smallskip verified.\\
{\bf Condition (II)}. We have to prove that the functor $-\otimes_{B_1}M$ maps the simple 
$B_{1}$-modules to the simple $B_{2}$-modules. 
First, we have $k_{\mathsf G_1}\otimes_{B_1}M\cong k_{\mathsf G_2}$ by \cite[Lemma~3.4(a)]{KL20a}.
Next, as  $N_{\mathsf{G}_i}(Q)/Q\cong\mathfrak S_3$, there are precisely $|\mathfrak{S}_{3}|_{2}=2$ non-isomorphic 
trivial source $k\mathsf{G}_{i}$-modules (see e.g. \cite[Theorem~4.6(c)]{Las23}), namely the modules $\Sc(\mathsf{G}_i,Q)$ and $S_i$, both belonging to the principal block by Lemma~\ref{lem:tsPSL_{3}(q)}.
Now, on the one hand, we know  from \cite[Theorem 2.1(a)]{KL20a} that  $S_1\otimes_{B_1}M=:V$ is indecomposable and non-projective, and on the other hand we know from \cite[Lemma 3.4(b)]{KL20a}
that $V$ is a trivial source module with vertex $Q$. Thus $V$ is either $\Sc(\mathsf{G}_2,Q)$ or $S_2$. However, $\Sc(\mathsf{G}_{1},Q)\otimes_{B_1}M\cong \Sc(\mathsf{G}_{2},Q)\oplus\text{({\sf proj})}$ by \cite[Lemma 3.4(c)]{KL20a}.
Hence, it follows immediately  that 
$$S_1\otimes_{B_1}M\cong S_2\,. $$
It remains to treat $T_{1}$. By our assumption, $(q-1)_2=(q_2-1)_2=2^n$, so the Sylow $2$-subgroups of 
$\mathbb B_{1}$ and $\mathbb B_2$ are isomorphic, meaning that the Scott modules $\Sc(\mathsf{G}_1, \mathbb B_{1})$ and $\Sc(\mathsf{G}_2, \mathbb B_{2})$
have isomorphic vertices (see e.g.~\cite[Corollary 4.8.5]{NT89}). Therefore, \cite[Lemma 3.4(c)]{KL20a} together with Lemma~\ref{lem:tsPSL_{3}(q)}(d) yield
$$
\boxed{\begin{matrix}k_{\mathsf{G}_1}\\ T_{1}\\ k_{\mathsf{G}_1}\end{matrix}}\otimes_{B_1}M 
= \Sc(\mathsf{G}_1, \mathbb B_{1})\otimes_{B_1}M
\cong \Sc(\mathsf{G}_2, \mathbb B_{2}) \oplus\text{({\sf proj})}
= \boxed{\begin{matrix}k_{\mathsf{G}_2}\\ T_{2}\\ k_{\mathsf{G}_2}\end{matrix}} \oplus\text{({\sf proj})}
$$
and Lemma~\ref{StrippingOffMethod} implies that  $T_1\otimes_{B_1}M\cong T_2  \oplus\text{({\sf proj})}$\,. However, again \cite[Theorem~2.1(a)]{KL20a} tells us that $T_1\otimes_{B_1}M$ is indecomposable non-projective, proving that
$$T_1\otimes_{B_1}M\cong T_2\,. $$
Thus, Condition (II) is verified and the proposition is proved. 
 \end{proof}

%%%%%%%%%%%%%%%%%%%%%%%%%%%%%%%%%
%%%   Groups of type W6(n) 
%%%%%%%%%%%%%%%%%%%%%%%%%%%%%%%%%

\section{Groups of type {\sf (W6(n))}}\label{sec:W6}

Finally, we examine the groups of type {\sf (W6(n))}, and we continue using Hypothesis~\ref{Ass:p=2}. 
Our aim is to prove Theorem~\ref{thm:MainResult}(b) for such groups.  
However, in order to reach this aim, first we start by collecting some information about the principal $2$-block of $\PGU_{3}(q)$ 
and about some of its modules.

\begin{Notation}\label{nota:PGU}%
Throughout this section, given a positive power $q$ of prime number satisfying $(q+1)_{2}=2^{n}$, 
we set the following notation.
The $3$-dimensional projective unitary group is 
$$\GU_{3}(q)=\{(a_{rs})\in \GL_{3}(q^{2})\mid (a_{sr})w_{0}(a_{rs}^{q})=w_{0}\}\qquad\text{ with }\quad w_{0}:=\left(\begin{smallmatrix} 0& 0&1 \\ 0& 1&0 \\ 1&0 &0
\end{smallmatrix}\right),$$
$\mathsf{G}:=\mathsf{G}(q):=\PGU_{3}(q)=\GU_{3}(q)/Z(\GU_{3}(q))$ where $Z(\GU_{3}(q))$ consists of the scalar matrices in $\GU_{3}(q)$, 
and $\PSU_{3}(q)=:G(q)$ is the commutator subgroup of $\PGU_{3}(q)$, which is a normal subgroup of index $(3,q+1)$.
Furthermore, we let $\mathbb{B}:=\mathbb{B}(q)$ denote the Borel subgroup of $\GU_{3}(q)$ defined by $\mathbb{B}(q)=\mathbb{T}(q)\mathbb{U}(q)$, with
$\mathbb{T}(q):=\{\text{diag}(\zeta^{-1},1,\zeta^{q})\mid \zeta\in\mathbb{F}_{q^{2}}^{\times}\}$ and 
$$
\mathbb{U}(q):=\left\{ 
 \left( \begin{smallmatrix} 1 & 0&0 \\   \alpha &1 &0 \\  \beta & -\alpha^{q}&1 
  \end{smallmatrix}\right)
   \in\GU_{3}(q)\mid 
   \alpha,\beta\in\mathbb{F}_{q^{2}}\text{ and }\alpha^{q+1}+\beta^{q}+\beta=1
  \right\}.$$
It is clear that $\mathbb{B} \cap Z(\GU_{3}(q))=1$, thus we may, and we do, identify $\mathbb{B}$ with a subgroup of $\PGU_{3}(q)$.\\
Next, we observe that~\cite[Theorem 1A]{Brauerwr} gives us the number of ordinary characters in the principal $2$-block of $G$ and their degrees. 
Moreover, using \cite{SF73} and \cite[Table~1.1 and Table~3.1]{Gec90}, or CHEVIE \cite{Chev} it is easy to compute central characters and we have that $B_{0}(k\mathsf{G})$ contains the following ordinary irreducible characters, 
in the notation of \cite{Gec90}: 
$$
\begin{array}{l|c|c} & \mathrm{condition}&\mathrm{number \ of \ characters}\\ 
\noalign{\hrule }
1_{\mathsf{G}}   &  &1 \\ 
\chi_{q(q-1)}&  &1 \\
\chi_{q^3}&  &1 \\
\hline
\chi_{q^2-q+1}^{(u)}&  u\equiv 0\pmod{(q+1)_{2'}}&2^n-1  \\ 
\hline
\chi_{q(q^2-q+1)}^{(u)}& u\equiv 0\pmod{(q+1)_{2'}}&2^n-1  \\
\hline
\chi_{(q-1)(q^2-q+1)}^{\color{black}(u,v)} &u,v\equiv 0\pmod{(q+1)_{2'}} & (2^n-1)(2^{n-1}-1)/3 \\
\hline
\chi_{q^3+1}^{(u)}& u\equiv 0\pmod{(q+1)_{2'}}&2^{n-1}
\end{array}
$$
where the subscripts denote the degrees.
Finally, the principal block of $k\mathsf{G}$ contains precisely three pairwise non-isomorphic  simple modules and we write
$$\Irr_{k}(B_{0}(k\mathsf{G}))=\{k_{\mathsf{G}}, \varphi, \theta \}$$
as  in \cite[Theorem~4.1]{His04} where the simples and their Brauer characters are identified for simplicity.

\end{Notation}

\begin{Lemma}\label{lem:2decPSU3}
With the notation of Notation~\ref{nota:PGU}, the decomposition matrix of the principal $2$-block of $\mathsf{G}=\PGU_3(q)$ is as follows:
$$
\begin{array}{l|ccc|c|c} & k_{\mathsf{G}} &\varphi &\theta &\mathrm{number \ of \ characters}\\ 
\noalign{\hrule }
1_{\mathsf{G}}   & 1& . & . & 1 \\ 
\chi_{q(q-1)}& . & 1&.& 1 \\
\chi_{q^3}&1& 2&1& 1 \\
\hline
\chi_{q^2-q+1}^{(u)}&1&1& . & 2^n-1 \\ 
\hline
\chi_{q(q^2-q+1)}^{(u)}&1&1&1&2^n-1 \\
\hline
\chi_{(q-1)(q^2-q+1)}^{\color{black}(u,v)}& . & . &1& (2^n-1)(2^{n-1}-1)/3\\
\hline
\chi_{q^3+1}^{(u)}&2&2&1& 2^{n-1}
\end{array}
$$
\end{Lemma}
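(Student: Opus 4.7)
The plan is to assemble the claimed decomposition matrix from three ingredients. First, by \cite[Theorem~4.1]{His04} the principal $2$-block $B_0(k\mathsf{G})$ has exactly three simple modules, namely $k_{\mathsf{G}}$, $\varphi$ and $\theta$, with Brauer characters described there. Second, the ordinary irreducible characters belonging to $B_0(k\mathsf{G})$ together with their degrees have already been recorded in Notation~\ref{nota:PGU} using \cite[Theorem~1A]{Brauerwr} and central-character computations based on \cite{SF73}, \cite{Gec90} and \cite{Chev}. It thus remains to pin down each decomposition number, which I would do by transporting Geck's computation in \cite{Gec90} for $\SU_3(q)$ to $\PGU_3(q)$.

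For the three unipotent characters $1_{\mathsf{G}}$, $\chi_{q(q-1)}$ and $\chi_{q^3}$, the $2$-modular decomposition numbers were computed in \cite{Gec90} at the level of $\SU_3(q)$: the first two rows come out as $(1,0,0)$ and $(0,1,0)$, while the Steinberg character decomposes as $(1,2,1)$ in the ordering $(k_{\mathsf{G}}, \varphi, \theta)$. These entries descend to $\mathsf{G}=\PGU_3(q)$ since the scalar subgroup $Z(\GU_3(q))$ has odd part $(q+1)_{2'}$ and the index $|\PGU_3(q):\PSU_3(q)|=(3,q+1)$ is odd; hence Lemma~\ref{lem:G/Op'(G)} together with the Alperin--Dade reduction of Theorem~\ref{thm:AlperinDade} yield a canonical matching of simple modules between $\SU_3(q)$, $\PSU_3(q)$ and $\PGU_3(q)$ which preserves decomposition data.

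For the non-unipotent characters in $B_0(k\mathsf{G})$ one reads off the decomposition entries from the $2$-modular Brauer character table of $\PGU_3(q)$, which is implicit in \cite{Gec90} (compare \cite[Table~3.1]{Gec90}) and also accessible via CHEVIE~\cite{Chev}. Concretely, the characters $\chi^{(u)}_{q^2-q+1}$, $\chi^{(u)}_{q(q^2-q+1)}$ and $\chi^{(u)}_{q^3+1}$ appear as constituents of Harish--Chandra inductions from the Borel subgroup~$\mathbb{B}$, while the cuspidal family $\chi^{(u,v)}_{(q-1)(q^2-q+1)}$, being parametrised by a $2$-regular torus, has Brauer reduction supported only on~$\theta$, explaining the row $(0,0,1)$.

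The main obstacle is justifying the multiplicity-two entries in the $\varphi$-column, namely at $\chi_{q^3}$ and at $\chi^{(u)}_{q^3+1}$. I would check these either directly, by evaluating the Brauer characters $\varphi$ and $\theta$ on $2$-regular classes using the character table of $\GU_3(q)$ from \cite{SF73}, or indirectly by Brauer reciprocity: writing $D$ for the claimed decomposition matrix, the Cartan matrix $C_{B_0(k\mathsf{G})}=D^{\top}D$ must be symmetric and positive definite with determinant a power of~$2$ equal to $|P|=2^{2n+1}$, and this numerical rigidity, combined with the known values of $k(B_0(k\mathsf{G}))$ and $\ell(B_0(k\mathsf{G}))=3$ from Theorem~\ref{thm:k(B)l(B)}, pins down uniquely the remaining entries of the stated matrix.
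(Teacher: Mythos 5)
Your overall route (quote the unipotent part of the decomposition matrix from the literature, then fill in the remaining rows by character-table computations, after transporting between $\SU_3(q)$, $\PSU_3(q)$ and $\PGU_3(q)$ via odd-index/odd-central-kernel reductions) is the same as the paper's. The transfer step via Lemma~\ref{lem:G/Op'(G)} and Theorem~\ref{thm:AlperinDade} is fine, and once the three irreducible Brauer characters are known the non-unipotent rows are indeed routine computations with \cite{SF73}, \cite{Gec90} or CHEVIE. The problem is precisely the point you yourself flag as ``the main obstacle'': the entry $2$ in the row of $\chi_{q^3}$. You attribute the unipotent part, including this entry, to \cite{Gec90}; but Geck's 1990 paper left exactly this entry undetermined for $\ell=2$ --- it was settled only later by Okuyama--Waki \cite{OW02} and Hiss \cite{His04}, and the paper accordingly cites \cite[Appendix]{His04} (see also \cite[Table~4.5]{GJ11}) for the unipotent square. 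So the load-bearing entry is not available from the source you cite.

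Neither of your two fallback arguments closes this gap. Evaluating $\varphi$ and $\theta$ on $2$-regular classes is circular: the Brauer character of $\theta$ is $\chi_{q^3}|_{2'}-1_{\mathsf G}|_{2'}-\alpha\,\varphi$, where $\alpha$ is the very entry you are trying to compute, so you cannot ``evaluate $\theta$'' without already knowing $\alpha$ (and the assertion that the cuspidal family $\chi^{(u,v)}_{(q-1)(q^2-q+1)}$ reduces irreducibly, which would give $\theta$ independently, is itself an unproved decomposition-matrix statement). The Cartan-matrix rigidity argument rests on a false premise: for the claimed matrix $D$ one computes $\det(D^{\top}D)=2^{3n+1}$, whereas $|P|=2^{2n+1}$; the order of the defect group is only the \emph{largest elementary divisor} of $C_B$, not its determinant (here the elementary divisors are $2^{2n+1},\,2^{n},\,1$). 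As stated, your numerical test would reject the correct matrix. To repair the proof, replace the appeal to \cite{Gec90} for the unipotent part by \cite[Appendix]{His04} or \cite[Table~4.5]{GJ11} (ultimately resting on \cite{OW02}), and keep the character-table computation only for the non-unipotent rows.
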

\begin{proof}
To start with,~\cite[Appendix]{His04} gives us the unipotent part of the decomposition matrix. (See also \cite[Table 4.5]{GJ11}.)
Then, direct computations using \cite[Table 1.1 and Table 3.1]{Gec90} (see also \cite{SF73}) or {\sf CHEVIE} \cite{Chev} yield the remaining entries. 
In particular, it follows easily from the character table that any two irreducible characters of the same degree have the same reduction modulo~$2$. 
\end{proof}

\begin{Corollary}\label{cor:simplesPSU(q)}%
The $B_{0}(k\mathsf{G})$-simple modules $\varphi$ and $\theta$ are not trivial source modules.
\end{Corollary}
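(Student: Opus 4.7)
I would argue by contradiction in two steps. The key general fact to exploit is that every trivial source $k\mathsf{G}$-module $M$ admits a unique $p$-permutation lift $\widehat M$ to an $\mathcal O\mathsf{G}$-lattice; its ordinary character $\chi_{\widehat M}$ is then a non-negative integer combination of elements of $\Irr(B_{0}(k\mathsf{G}))$ that restricts on $2$-regular elements to the Brauer character of~$M$. Crucially, since $\widehat M|_{\langle z\rangle}$ is a permutation $\mathcal O\langle z\rangle$-module for every $2$-element $z\in\mathsf{G}$, the value $\chi_{\widehat M}(z)$ counts the number of $z$-fixed vectors in a $\langle z\rangle$-stable $\mathcal O$-basis, and hence is a \emph{non-negative integer}.

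Suppose first that $\varphi$ is a trivial source module, and write $\chi_{\widehat\varphi}=\sum_\chi a_\chi\,\chi$ with $a_\chi\in\mathbb{Z}_{\geq 0}$. The equality $\chi_{\widehat\varphi}|_{2\text{-reg}}=\varphi$, combined with the decomposition matrix in Lemma~\ref{lem:2decPSU3}, forces $a_\chi=0$ for every ordinary character whose row has a strictly positive entry in the $k_{\mathsf{G}}$-column or the $\theta$-column. Scanning the matrix, the only surviving character is $\chi_{q(q-1)}$ with $a_\chi=1$, so necessarily $\chi_{\widehat\varphi}=\chi_{q(q-1)}$. By exactly the same reasoning, if $\theta$ were a trivial source module, then one is forced to have $\chi_{\widehat\theta}=\chi_{(q-1)(q^2-q+1)}^{(u,v)}$ for some admissible pair~$(u,v)$.

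To reach the contradiction I would pick an involution $z\in Z(P)\subseteq\mathsf{G}$, namely the image in $\mathsf{G}$ of $\mathrm{diag}(1,-1,-1)\in\GU_3(q)$, whose centralizer in $\GU_3(q)$ is the Levi subgroup $\GU_1(q)\times\GU_2(q)$. Applying Lusztig's Jordan decomposition to the cuspidal unipotent character $\chi_{q(q-1)}$ (whose Lusztig-family correspondent on $C_\mathsf{G}(z)$ is the Steinberg character of the $\GU_2(q)$-factor) gives
\[
  \chi_{q(q-1)}(z)=\varepsilon_\mathsf{G}\,\varepsilon_{C_\mathsf{G}(z)}\cdot\mathrm{St}_{\GU_2(q)}(1)=-q,
\]
and the values $\chi_{(q-1)(q^2-q+1)}^{(u,v)}(z)$, read off the character tables of $\PGU_3(q)$ in~\cite{Gec90} (Tables~1.1 and~3.1) or~\cite{SF73}, are strictly negative integers independent of $(u,v)$ (they only depend on the torus class, not on the character of the torus, when evaluated at a $2$-element). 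Both values contradict the non-negativity established in the first paragraph, so neither $\varphi$ nor $\theta$ can be a trivial source module.

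The main obstacle in this plan is the last step: cleanly verifying the sign of the two character values at~$z$. The unipotent case falls out of Lusztig's Jordan decomposition together with a short calculation of the $\mathbb{F}_q$-ranks of $\mathsf{G}$ and $C_\mathsf{G}(z)$, but the principal-series case requires a direct lookup in the character tables of~\cite{SF73} or \cite{Gec90} and a uniform check that the value remains negative for every admissible pair $(u,v)$.
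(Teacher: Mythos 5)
Your strategy is the same as the paper's: use the decomposition matrix of Lemma~\ref{lem:2decPSU3} to see that any $\mathcal O\mathsf{G}$-lift of $\varphi$ must afford $\chi_{q(q-1)}$ and any lift of $\theta$ must afford some $\chi^{(u,v)}_{(q-1)(q^2-q+1)}$, and then combine the non-negativity of the character of a trivial source module at $2$-elements (this is exactly \cite[II Theorem 12.4(iii)]{Lan83}, which the paper cites) with the fact that these characters take strictly negative values at suitable $2$-elements. For that last point the paper simply refers to the character tables in \cite{SF73} and \cite{Gec90}.

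The concrete verification you propose at the single involution $z$ is, however, not quite right, and this is precisely the step you flag as the obstacle. First, $\chi_{q(q-1)}(z)=-(q-1)$ rather than $-q$ (for $q=3$ one has $\mathsf{G}=\PGU_3(3)=\PSU_3(3)$ and the degree-$6$ character takes the value $-2$ on the unique class of involutions); this slip is harmless since the value is still negative. More seriously, the values $\chi^{(u,v)}_{(q-1)(q^2-q+1)}(z)$ do depend on $(u,v)$. Writing $\chi^{(u,v)}=-R_{T}^{\mathsf{G}}(\theta_1\otimes\theta_2\otimes\theta_3)$ for the torus $T\cong\GU_1(q)^3$ and pairwise distinct characters $\theta_i$ of $2$-power order with $\theta_1\theta_2\theta_3=1$, the character formula gives
\[
\chi^{(u,v)}(z)=(q-1)\bigl(\epsilon_1\epsilon_2+\epsilon_1\epsilon_3+\epsilon_2\epsilon_3\bigr),
\qquad \epsilon_i:=\theta_i(-1)\in\{\pm1\},
\]
which equals $3(q-1)>0$ when all $\epsilon_i=1$; this occurs for the admissible triple $\{1,\lambda,\lambda^{-1}\}$ with $\lambda$ of order $4$ as soon as $n\geq 3$. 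So for such $(u,v)$ negativity must be exhibited at a $2$-element of $T$ of larger order (where the six summands $\prod_i\theta_i(t_{\sigma(i)})$ are genuine roots of unity and a negative value can be located), or simply read off the tables of \cite{SF73} and \cite{Gec90} as the paper does. With that repair your argument is complete and coincides with the paper's.
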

\begin{proof}
It follows from the decomposition matrix of $B_{0}(k\mathsf{G})$ in Lemma~\ref{lem:2decPSU3} that $\varphi$ and $\theta$ are liftable modules.
Moreover, any lift of $\varphi$ to an $\mathcal{O}\mathsf{G}$-lattice affords the unipotent character $\chi^{}_{q(q-1)}$, and any lift of $\theta$ to an $\mathcal{O}\mathsf{G}$-lattice affords one of the characters $\chi_{(q-1)(q^2-q+1)}^{(u,v)}$ of degree $(q-1)(q^2-q+1)$. However, it follows from~\cite[II Theorem 12.4(iii)]{Lan83} that neither $\chi^{}_{q(q-1)}$ nor the characters  $\chi_{(q-1)(q^2-q+1)}^{(u,v)}$ can be the characters of trivial source modules, because it is easily checked from the character table that these characters take strictly negative values at some $2$-elements. (See e.g.~\cite[Table~3.1]{Gec90}.)
\end{proof}

Next we collect useful information about the permutation module  $k_{\mathbb B}{\uparrow}^{\mathsf{G}}$ and the 2nd Heller translate $\Omega^{2}(k_{\mathsf{G}})$,
based on ideas of  \cite[pp.\,259--260 and p.\,263]{OW02} and which complements the information provided in \cite[pp.\,227--228]{His04}.

\begin{Lemma}\label{OkyWaki}%
Assume $\mathsf{G}=\PGU_3(q)$ and set $X:=\Omega^2(k_{\mathsf{G}})$. Then, the following assertions hold:
\begin{enumerate} 
\renewcommand{\labelenumi}{\rm{(\alph{enumi})}} 
\item 
the permutation module $k_{\mathbb B}{\uparrow}^\mathsf{G}$ is a trivial source module  affording the ordinary character ${1_{\mathbb B}}{\uparrow}^\mathsf{G} =1_{\mathsf{G}}+\chi_{q^3}$
and satisfying 
%(note that we can choose $Q\leq P$).
$$ 
{k_{\mathbb B}}{\uparrow}^\mathsf{G} \ = \ 
\boxed{\begin{matrix}
k_\mathsf{G}\\ \varphi\\ \theta\\ \varphi\\k_\mathsf{G}
\end{matrix}}
\ = \ {\mathrm{Sc}}(\mathsf{G},\mathbb B)\ = \ {\mathrm{Sc}}(\mathsf{G}, Q)\,
$$
where $Q\in\Syl_2(\mathbb B)$ is such that  $Q\cong C_{2^{n+1}}$ and we may assume that $Q\leq P$;
\item
no indecomposable 
direct summand 
$U$ of $\varphi{\downarrow}_{\mathbb B}$ or  $\theta{\downarrow}_{\mathbb B}$ 
 belongs to $B_0(k\mathbb B)$;
\item 
$\Ext_{k\mathsf{G}}^1(k_\mathsf{G}, k_\mathsf{G})=0$;
\item 
$\dim_k\,\Ext_{k\mathsf{G}}^1(k_\mathsf{G}, \varphi)=\dim_k\,\Ext_{k\mathsf{G}}^1(\varphi,k_\mathsf{G})=1$;
\item 
$\Ext_{k\mathsf{G}}^1(k_\mathsf{G}, \theta)=\Ext_{k\mathsf{G}}^1(\theta,k_\mathsf{G})=0$;
\item  
$\hd(\Omega(k_\mathsf{G}))=\varphi$ and so there exists a surjective $k\mathsf{G}$-homomorphism 
$P(\varphi)\twoheadrightarrow\Omega(k_\mathsf{G})$;
\item
$X$ lifts to an $\mathcal O\mathsf{G}$-lattice which affords the character 
$\chi_{q(q-1)}+\chi_{q^3}$ and  as composition factor 
$X= k_\mathsf{G}+3\times\varphi+\theta$\,;
\item 
$\soc(X)\cong\varphi$ and $\varphi\mid \hd(X)$\,;
\item
$\dim_k\,\Hom_{k\mathsf{G}}(X, k_{\mathbb B}{\uparrow}^\mathsf{G})=
\dim_k\,\Hom_{k\mathsf{G}}(k_{\mathbb B}{\uparrow}^\mathsf{G},X)=1$;
\item
$k_\mathsf{G} \not| \,\, \soc^2(X)$;
\item

$X$ has a uniserial $k\mathsf{G}$-submodule $Z\cong k_{\mathbb B}{\uparrow}^\mathsf{G} /\soc(k_{\mathbb B}{\uparrow}^\mathsf{G})$
of the form 
$$
\boxed{
\begin{matrix}
k_\mathsf{G}\\ \varphi\\ \theta\\ \varphi
\end{matrix}
}
$$ 
and hence if $Y:=\rad(Z)=\boxed{
\begin{matrix}
\varphi\\ \theta\\ \varphi
\end{matrix}
}$  then 
$X/Y$ is of the form 
$\boxed{ \begin{matrix} \varphi \\ k_\mathsf{G} \end{matrix}}$ or of the form $k_\mathsf{G}\oplus\varphi$\,.

\end{enumerate}
\end{Lemma}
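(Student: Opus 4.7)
The plan is to treat part (a) as the central structural input, with (b) a block-distribution consequence and (c)--(k) unwound by standard homological and character-theoretic arguments, exploiting that $k\mathsf{G}$ is symmetric (so each $P(S)$ is injective with simple socle $S$), the decomposition matrix of Lemma~\ref{lem:2decPSU3}, and self-duality of permutation modules.

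For (a), I would first apply Steinberg's formula (or the Bruhat decomposition) to get $1_{\mathbb{B}}\uparrow^{\mathsf{G}} = 1_{\mathsf{G}} + \chi_{q^3}$, then reduce modulo $2$ via Lemma~\ref{lem:2decPSU3} to read off composition factors $2\times k_{\mathsf{G}} + 2\times\varphi + \theta$. Frobenius reciprocity gives $\dim\Hom_{k\mathsf{G}}(k_{\mathsf{G}}, k_{\mathbb{B}}\uparrow^{\mathsf{G}}) = 1$; combined with (b), which via $\Hom_{k\mathsf{G}}(\varphi, k_{\mathbb{B}}\uparrow^{\mathsf{G}}) = \Hom_{k\mathbb{B}}(\varphi\downarrow_{\mathbb{B}}, k_{\mathbb{B}}) = 0$ (and similarly for $\theta$), this forces the socle to be simple and equal to $k_{\mathsf{G}}$; self-duality of the permutation module yields the same for the head. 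Lemma~\ref{lem:simpleSocle} then forces indecomposability, identifying the module with $\Sc(\mathsf{G},\mathbb{B})$, and since $Q$ is a Sylow $2$-subgroup of $\mathbb{B}$ (cyclic of order $2^{n+1}$ because $|\mathbb{B}|_{2} = (q-1)_{2}(q+1)_{2} = 2\cdot 2^{n}$ with cyclic torus contribution) also with $\Sc(\mathsf{G},Q)$. The uniserial Loewy series with layers $k_{\mathsf{G}}, \varphi, \theta, \varphi, k_{\mathsf{G}}$ from top to bottom then follows by a radical-series analysis combining self-duality with Hom-dimension counts at each step to exclude splittings of the interior layers. Part (b) I would establish by a block-distribution argument on $\mathbb{B}$ using Fong's theory for solvable groups: $B_0(k\mathbb{B})$ consists of exactly the simples on which $O_{2'}(\mathbb{B})$ acts trivially, and one checks via the character values of $\chi_{q(q-1)}$ and of $\chi_{(q-1)(q^2-q+1)}^{(u,v)}$ on the unipotent or semisimple classes of $\mathbb{B}$ that $O_{2'}(\mathbb{B})$ (in particular the unipotent radical $\mathbb{U}$) acts non-trivially on every composition factor of the restrictions.

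Parts (c)--(e) then follow from (a): since $k_{\mathbb{B}}\uparrow^{\mathsf{G}}$ has simple head $k_{\mathsf{G}}$ it is a quotient of $P(k_{\mathsf{G}})$, and combining the second radical layer of $k_{\mathbb{B}}\uparrow^{\mathsf{G}}$ with the cohomological fact $H^{1}(\mathsf{G},k_{\mathsf{G}}) = \Hom(\mathsf{G}^{\mathrm{ab}},k^{+}) = 0$ (as $\mathsf{G}^{\mathrm{ab}}$ is a $2'$-group) gives $\Ext^{1}(k_{\mathsf{G}}, k_{\mathsf{G}}) = 0$, $\dim\Ext^{1}(k_{\mathsf{G}},\varphi) = 1$ and $\Ext^{1}(k_{\mathsf{G}},\theta) = 0$, together with the symmetric statements by duality. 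Part (f) is then immediate: (c) and (e) rule out $k_{\mathsf{G}}$ and $\theta$ from $\hd(\Omega(k_{\mathsf{G}}))$, while (d) identifies $\varphi$ with multiplicity one, so $P(\varphi)\twoheadrightarrow\Omega(k_{\mathsf{G}})$. For (g), the induced short exact sequence $0\to X \to P(\varphi)\to\Omega(k_{\mathsf{G}})\to 0$ yields $\chi_{X} = \chi_{P(\varphi)} - \chi_{P(k_{\mathsf{G}})} + 1_{\mathsf{G}}$ as virtual characters; reading off the PIM characters from Lemma~\ref{lem:2decPSU3} this simplifies to $\chi_{X} = \chi_{q(q-1)} + \chi_{q^{3}}$, proving the lift exists and giving the composition factors $k_{\mathsf{G}} + 3\varphi + \theta$ by reduction mod $2$.

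For (h), essentiality of the socle in the injective module $P(\varphi)$ forces $\soc(X) = \soc(P(\varphi)) \cong \varphi$ (via $X\hookrightarrow P(\varphi)$), while the long exact Ext sequence gives $\dim\Hom_{k\mathsf{G}}(X,\varphi) = \dim\Ext^{2}_{k\mathsf{G}}(k_{\mathsf{G}},\varphi) \geq 1$ and hence $\varphi \mid \hd(X)$. Part (i) is a Hom-dimension computation via \cite[II Theorem~12.4(iii)]{Lan83} using the ordinary characters from (g) and (a). For (k), the unique (up to scalar) map $k_{\mathbb{B}}\uparrow^{\mathsf{G}}\to X$ from (i) has image $Z$ with $\soc(Z) \subseteq \soc(X) \cong \varphi$, forcing $Z\cong k_{\mathbb{B}}\uparrow^{\mathsf{G}}/\soc(k_{\mathbb{B}}\uparrow^{\mathsf{G}})$ by (a); then $Y = \rad(Z)$ has the displayed uniserial form and counting composition factors in $X/Y$ leaves only the two listed possibilities. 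Part (j) then follows by applying $\Hom_{k\mathsf{G}}(k_{\mathsf{G}},-)$ to $0\to\soc(X)\to X\to X/\soc(X)\to 0$, combining $\Hom(k_{\mathsf{G}},X) = 0$ from (h), $\dim\Ext^{1}(k_{\mathsf{G}},\varphi) = 1$ from (d), and an analysis of the connecting map $\Ext^{1}(k_{\mathsf{G}},\varphi) \to \Ext^{1}(k_{\mathsf{G}},X)$ through the Heller sequence for $X$, which together force $\Hom(k_{\mathsf{G}},X/\soc(X)) = 0$. I expect the principal obstacle to be pinning down the \emph{uniserial} Loewy structure in (a): self-duality and the simple socle constrain but do not alone force uniseriality of the three interior composition factors, so one must explicitly exclude potential splittings either by Hom/Ext bookkeeping using the Cartan entries or by exploiting the module's role as the top truncation of $P(k_{\mathsf{G}})$; once this is secured, parts (b)--(k) become largely routine.
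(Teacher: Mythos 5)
There is a genuine gap, and it sits at the foundation. For part (a) you correctly observe that self-duality, the simple socle $k_{\mathsf G}$ and the composition factors $2\times k_{\mathsf G}+2\times\varphi+\theta$ do not by themselves force uniseriality (a Loewy series with middle layer $\varphi\oplus\theta\oplus\varphi$ is not excluded by anything you establish), and you leave this ``principal obstacle'' unresolved. The paper does not derive the uniserial shape from first principles either: it quotes it directly from Hiss, \cite[Theorem~4.1(c) and Appendix]{His04}. Since every later part of the lemma, and ultimately Condition~(II) in the proof of Proposition~\ref{prop:PSU3}, feeds off this precise five-step uniserial structure, the proposal as written does not prove the lemma. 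Two further points in the same cluster: your route to (b) via Fong theory aims to show that $O_{2'}(\mathbb B)$ acts non-trivially on \emph{every composition factor} of $\varphi{\downarrow}_{\mathbb B}$, which is stronger than what (b) asserts (a statement about indecomposable direct summands) and is not verified; the paper instead deduces (b) from (a) by Frobenius reciprocity. And your derivation of (d) and (e) from ``the second radical layer of $k_{\mathbb B}{\uparrow}^{\mathsf G}$'' only yields the lower bound $\dim_k\Ext^1_{k\mathsf G}(k_{\mathsf G},\varphi)\ge 1$, because the second radical layer of $P(k_{\mathsf G})$ may be strictly larger than that of its quotient $k_{\mathbb B}{\uparrow}^{\mathsf G}$; the upper bound and the vanishing of $\Ext^1_{k\mathsf G}(k_{\mathsf G},\theta)$ require the Eckmann--Shapiro isomorphism $\Ext^1_{k\mathsf G}(k_{\mathbb B}{\uparrow}^{\mathsf G},-)\cong\Ext^1_{k\mathbb B}(k_{\mathbb B},-{\downarrow}_{\mathbb B})$ combined with (b), which is exactly how the paper argues.

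The second substantive problem concerns (j) and the logical order of (j) and (k). Given $\Hom_{k\mathsf G}(k_{\mathsf G},X)=0$, the statement $k_{\mathsf G}\nmid\soc^2(X)$ is \emph{equivalent} to the injectivity of the map $\Ext^1_{k\mathsf G}(k_{\mathsf G},\varphi)\to\Ext^1_{k\mathsf G}(k_{\mathsf G},X)$ induced by $\soc(X)\hookrightarrow X$, so your proposed ``analysis of the connecting map'' is a reformulation of (j), not a proof of it; the paper needs a genuinely different device here, namely the Auslander--Reiten sequence terminating in $k_{\mathsf G}$, together with Lemma~\ref{lem:simpleSocle}. Moreover, your argument for (k) tacitly assumes that the image of the (up to scalar unique) nonzero map $k_{\mathbb B}{\uparrow}^{\mathsf G}\to X$ cannot be the uniserial length-two quotient with head $k_{\mathsf G}$ and socle $\varphi$ --- which also has socle contained in $\soc(X)\cong\varphi$ --- and ruling that out is precisely the content of (j); deriving (k) before (j) is therefore circular. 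Finally, in (h) the inequality $\dim_k\Ext^2_{k\mathsf G}(k_{\mathsf G},\varphi)\ge 1$ is asserted without justification, whereas the paper obtains $\varphi\mid\hd(X)$ from the $\mathcal O$-lift of $X$ and a pure sublattice affording $\chi_{q^3}$ via \cite[I Theorem 17.3]{Lan83}.
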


\begin{proof} 
%he proof here is essentially due to  
%\cite[The proofs of Lemmas~3.3 and 3.4]{OW02}.
(a) The claim about the structure of $Q$ is clear from the structure of $\mathbb{B}$. 
Hence, it is clear that $\mathrm{Sc}(\mathsf{G},\mathbb B) = \mathrm{Sc}(\mathsf{G}, Q)$  
(see e.g.~\cite[Corollary 4.8.5]{NT89}).
The claim about $k_{\mathbb B}{\uparrow}^\mathsf{G}$ being uniserial with the given composition series 
and the given ordinary character is given by  \cite[Theorem 4.1(c) and Appendix (pp. 238--241)]{His04}.
Then, as $k_{\mathbb B}{\uparrow}^\mathsf{G}$ is indecomposable, and 
${\mathrm{Sc}}(\mathsf{G},\mathbb B)$ 
is an indecomposable direct summand of $k_{\mathbb B}{\uparrow}^\mathsf{G}$ by definition, certainly 
\smallskip $k_{\mathbb B}{\uparrow}^\mathsf{G}={\mathrm{Sc}}(\mathsf{G},\mathbb B)$. 
\par
(b) Suppose that $U\,|\,\varphi{\downarrow}_{\mathbb B}$ and $U$ lies in $B_0(k\mathbb B)$.
Since $\mathbb B$ is $2$-nilpotent, its principal block is nilpotent and so 
$\Irr_{k}(B_0(k\mathbb B))=\{k_{\mathbb{B}}\}$.
All the composition factors of $U$ are isomorphic to $k_{\mathbb{B}}$ as they must lie in $B_0(k\mathbb B)$. 
Thus, $0\neq \Hom_{k\mathbb B}(U,k_{\mathbb B})$ and Frobenius reciprocity yields
$$0\neq \Hom_{k\mathbb B}(\varphi{\downarrow}_{\mathbb B}, k_{\mathbb B})
\cong \Hom_{kG}(\varphi, k_{\mathbb B}{\uparrow}^\mathsf{G})\,,$$
proving that $\varphi$ is a constituent of the socle of  $k_{\mathbb B}{\uparrow}^G$. 
This contradicts (a) and so the first claim follows. 
The claim about $\theta$ is proved \smallskip analogously. 
\par
(c) By \cite[I Corollary 10.13]{Lan83}, $\Ext_{k\mathsf{G}}^1(k_\mathsf{G}, k_\mathsf{G})=0$ 
\smallskip as~$O^2(\mathsf{G})=\mathsf{G}$.
\par
(d) First, it is immediate from (a) that $\dim_k\,\Ext_{k\mathsf{G}}^1(k_\mathsf{G},\varphi)\geq 1$. 
Now, suppose that $\dim_k\,\Ext_{k\mathsf{G}}^1(k_\mathsf{G},\varphi)\geq 2$. 
Then, 
there exists a non-split short exact sequence
$$0\rightarrow\varphi\rightarrow V\rightarrow k_{\mathbb B}{\uparrow}^\mathsf{G}\rightarrow 0$$
of $k\mathsf{G}$-modules, i.e. $\Ext_{k\mathsf{G}}^1(k_{\mathbb B}{\uparrow}^\mathsf{G},\varphi)\neq 0$. 
However, by the Eckmann--Shapiro Lemma, 
$$\Ext_{k\mathsf{G}}^1(k_{\mathbb B}{\uparrow}^\mathsf{G},\varphi)\cong
 \,\Ext_{k\mathbb B}^1(k_{\mathbb B},\varphi{\downarrow}_{\mathbb B})\,,$$ 
 which is zero by (b).  This is a contradiction and so it follows that 
 $\dim_k\,\Ext_{k\mathsf{G}}^1(k_\mathsf{G},\varphi)=1$. 
Moreover, $\dim_k\,\Ext_{k\mathsf{G}}^1(k_\mathsf{G},\varphi)=1$ 
as well  by the self-duality \smallskip of $k_{\mathsf{G}}$ and $\varphi$.
\par
(e) 
Suppose that $\Ext_{k\mathsf{G}}^1(k_\mathsf{G},\theta)\,{\not=}\,0$. 
Then, with arguments similar to those used  in the proof of (d), we obtain that 
$\Ext_{k\mathsf{G}}^1(k_{\mathbb B}{\uparrow}^\mathsf{G},\theta)\,{\not=}\,0$, 
which contradicts (b). 
Again, as $k_{\mathsf{G}}$ and~$\theta$ are self-dual, it follows that 
$\Ext_{k\mathsf{G}}^1(k_\mathsf{G},\theta)=0$ \smallskip as well. 
\par
(f)
Since $\Irr_{k}(B_{0}(k\mathsf{G}))=\{k_\mathsf{G}, \varphi, \theta\}$, 
it follows from (c), (d) and (e) that the second Loewy layer of $P(k_{\mathsf{G}})$ 
consists just of the simple module $\varphi$, with multiplicity $1$. 
Thus, the claim follows from the fact that \smallskip 
$\Omega(k_\mathsf{G})=P(k_\mathsf{G}){\cdot}{\rad}(kG)$.  
\par
(g)
First, it is well-known that  $X$ lifts to an $\mathcal O\mathsf{G}$-lattice 
(see e.g. \cite[\S7.3]{Las23}).
Moreover, by (f) we have that  $\Omega^2(k_\mathsf{G})$ is the kernel of a 
short exact sequence of $k\mathsf{G}$-modules of the form
$$
0 \rightarrow\Omega^2(k_\mathsf{G}) \rightarrow P(\varphi)\rightarrow\Omega(k_\mathsf{G})\rightarrow 0\,.
$$
Thus, in the Grothendieck ring of $kG$, we have
$$
\Omega^2(k_\mathsf{G}) = P(\varphi)-\Omega(k_\mathsf{G}) = P(\varphi)-P(k_\mathsf{G}){\cdot}\rad(k\mathsf{G}) 
= P(\varphi)-( P(k_\mathsf{G})-k_\mathsf{G})\,.
$$
Using the decomposition matrix of $B_{0}(k\mathsf{G})$ given in Lemma~\ref{lem:2decPSU3}, 
we obtain that the character afforded by $\Omega^2(k_\mathsf{G})$ is  $\chi_{q(q-1)}+\chi_{q^3}$,  
 and the composition factors of $X$ \smallskip are as~claimed.
 \par
(h)
It is clear that $\soc(X)\cong\varphi$ as $\Omega^{2}(k_\mathsf{G})$ is a submodule 
of $P(\varphi)$ by  the proof of assertion (g).
Now, by Lemma \ref{lem:2decPSU3}, any lift of $\varphi$ affords the 
character $\chi_{q(q-1)}$. Thus, 
by \cite[I Theorem 17.3]{Lan83}, $X$ has a pure submodule $Y$ affording the 
Steinberg character~$\chi_{q^{3}}$.
Then, $X/Y\cong \varphi$, proving \smallskip the claim.
\par
(i) It follows from Frobenius reciprocity that
$$
\Hom_{k\mathsf{G}}(k_{\mathbb B}{\uparrow}^\mathsf{G}, X) 
\cong \Hom_{k\mathbb B}(k_{\mathbb B}, X{\downarrow}_{\mathbb B}).
$$
Now, as  $\Irr_{k}(B_0(k\mathsf{G}))=\{k_{\mathsf{G}},\varphi,\theta\}$ and by 
assertion (g) we have that $k_{\mathsf{G}}$ 
has multiplicity one as a composition factor of $X$, it follows from (b) that 
$$
 \Hom_{k\mathbb B}(k_{\mathbb B}, X{\downarrow}_{\mathbb B})
\cong \Hom_{k\mathbb B}(k_{\mathbb B}, k_\mathsf{G}{\downarrow}_{\mathbb B}) \cong k\ \
$$
as $k$-vector spaces.  The second equality is obtained \smallskip analogously.  
\par
(j) 
Consider the Auslander--Reiten sequence $(\mathcal E):0\rightarrow X\overset{g}{\rightarrow} E
\overset{\pi}\rightarrow k_{\mathsf{G}}\rightarrow 0$
starting at $X=\Omega^{2}(k_{\mathsf{G}})$ (and hence ending at $k_{\mathsf{G}}$).
(See e.g. \cite[\S 34]{The95} for this notion.)
By (d) there exists a uniserial module of length $2$ of the form
$$\boxed{\begin{matrix}k_{\mathsf{G}}\\ \varphi\end{matrix}}=:Y\,.$$
Consider the quotient homomorphism 
$\rho:Y\rightarrow Y/\varphi\cong k_{\mathsf{G}}$, which is obviously not a split-epi. Hence
there exists a  $k{\mathsf{G}}$-homomorphism $\alpha: Y\rightarrow E$ with 
$\pi\circ\alpha =\rho$.
Next we claim that $\ker(\alpha) \not=\soc(Y)$.  So assume $\ker(\alpha) =\soc(Y)$. Then, 
$E\geq\im(\alpha)\cong k_{\mathsf{G}}$, proving that $\im(\alpha)\leq\soc(E)$ (as it is simple). 
On the other hand, by (h), $\soc(X)=\varphi$, implying that $\im(\alpha) \cap \soc(X)=0$.
Thus, identifying $X$ with its image in $E$, we get  that  $\im(\alpha)\cap X=0$ 
as  $\im(\alpha)$ is simple. (Use here the same argument as in the 
last five lines of the proof of Lemma~\ref{lem:simpleSocle}.) 
Hence, $E$ has a submodule of the form $\im(\alpha)\oplus X$, which implies that
$E=\im(\alpha)\oplus X$ as we can read from the s.e.s. $(\mathcal E)$
 that they have the same $k$-dimension. Thus, it follows from \cite[Lemma ~6.12]{Car96} that
the sequence $(\mathcal E)$ splits, which is a contradiction and the claim follows.
Next, since $\alpha \neq 0$, it follows that $\ker(\alpha)=0$, that is,  $\alpha$ is injective. 
Hence, $\im(\alpha)\cong Y$. 
Now, suppose that $k_{\mathsf{G}}\,|\,\soc^2(X)$. Set $W:=\soc^2(X)+\im(\alpha)\leq E$.
Note that $\im(\alpha) \not\leq X$ since $X=\ker(\pi)$, so that $\im(\alpha)\not\leq\soc^2(X)$.
Hence $\soc^2(X)+\im(\alpha)$ has 
the following socle series 
$$\begin{bmatrix}k_\mathsf{G} \ k_\mathsf{G} \\ \varphi\end{bmatrix}\,,$$
since by Lemma~\ref{lem:simpleSocle} we have $\soc(E)=\soc(X)\cong\varphi$, 
where the last isomorphism holds  by~(h). This is a contradiction to (d), 
and so the \smallskip claim follows. 
\par
(k)
%Set $\mathcal M:=k_{\mathbb B}{\uparrow}^G$. 
It follows from assertions (i) and (a) that 
$$
1 = \dim_k\,\Hom_{k\mathsf{G}}(k_{\mathbb B}{\uparrow}^\mathsf{G},X) 
=
\dim_k\,\Hom_{k\mathsf{G}}\Biggl(\, \boxed{\begin{matrix}
k_\mathsf{G}\\ \varphi\\ \theta\\ \varphi\\k_\mathsf{G}
\end{matrix}},\,X\Biggr) 
=\dim_k\,\Hom_{k\mathsf{G}}\Biggl( \,
\boxed{\begin{matrix}
k_\mathsf{G}\\ \varphi\\ \theta\\ \varphi
\end{matrix}},\,X\Biggr),
$$
where $\boxed{\begin{matrix}
k_\mathsf{G}\\ \varphi\\ \theta\\ \varphi
\end{matrix}}:= k_{\mathbb B}{\uparrow}^\mathsf{G} /\soc(k_{\mathbb B}{\uparrow}^\mathsf{G})$ and the
 last equality holds because $c_X(k_\mathsf{G})=1$ by (g).  Therefore, there exists a non-zero 
 $k\mathsf{G}$-homomorphism 
 $$\gamma: \boxed{\begin{matrix}
k_\mathsf{G}\\ \varphi\\ \theta\\ \varphi
\end{matrix}} \longrightarrow X
 $$ 
Now,  either $\gamma$ is injective and we are done, or $\ker(\gamma)\neq 0$. 
In the latter case, by (h) we have  $\ker(\gamma)= \boxed{\begin{matrix}
 \theta\\ \varphi
\end{matrix}}$
and so there is an injective homomorphism
$$j:\, \boxed{\begin{matrix}
 k_{\mathsf{G}}\\ \varphi
\end{matrix}}\hookrightarrow X\,.$$
which contradicts assertion (j). The claim follows. 
\end{proof}

We can now prove the main result of this section.\\

\begin{Proposition}\label{lem:FinalStroke}\label{prop:PSU3}%
For each $i\in\{1,2\}$ let  $\mathsf{G}_{i}:=\mathsf{G}(q_{i})=\PGU_{3}(q_{i})$ and 
$G_{i}:=G(q_{i})=\PSU_3(q_i)$, where we assume that $(q_{i}+1)_2=2^n$. 
Then,  the following assertions hold:
\begin{enumerate}
\item[\rm(a)] $\mathrm{Sc}(\mathsf{G}_1\times \mathsf{G}_2,\Delta P)$ induces a splendid 
Morita equivalence between $B_0(k\mathsf{G}_1)$ and $B_0(k\mathsf{G}_2)$;
\item[\rm(b)]  $\mathrm{Sc}(G_1\times G_2,\Delta P)$ induces a splendid Morita
equivalence between $B_0(kG_1)$ and $B_0(kG_2)$. 
\end{enumerate}
\end{Proposition}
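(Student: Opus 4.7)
The plan is to prove (a) by verifying the hypotheses of Theorem~\ref{Thm:ProveExSMEQ}, and then to deduce (b) from (a). The reduction of (b) to (a) proceeds via Lemma~\ref{lem:Aut(P)} together with Theorem~\ref{thm:AlperinDade}(b), using that $G_i = \PSU_3(q_i)$ is normal in $\mathsf{G}_i = \PGU_3(q_i)$ of odd index $(3, q_i+1)$ and that $\Aut(P)$ is a $2$-group by Lemma~\ref{lem:CGp5956}. To set up (a), the equality $\mathcal F_P(\mathsf{G}_1) = \mathcal F_P(\mathsf{G}_2)$ follows from the Alperin--Brauer--Gorenstein analysis underlying Theorem~\ref{thm:classification_wr_bis}, and all involutions of each $\mathsf{G}_i$ are conjugate in $\mathsf{G}_i$.

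For Condition~(I), I would fix a central involution $z \in Z(P)$. A direct computation identifies $C_{\mathsf{G}_i}(z) \cong \GU_2(q_i)$ (with $P$ as a common Sylow $2$-subgroup of both sides, by comparing $2$-parts of orders), so Proposition~\ref{pro:SU2^m}(a) yields that $M_z := \Sc(C_{\mathsf{G}_1}(z) \times C_{\mathsf{G}_2}(z), \Delta P)$ induces a splendid Morita equivalence between the principal blocks of the two centralisers. The Brauer indecomposability of $M := \Sc(\mathsf{G}_1 \times \mathsf{G}_2, \Delta P)$ proved in \cite[Theorem~1.1]{KT20} then yields $M(\Delta \langle z \rangle) = M_z$, completing Condition~(I). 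By \cite[Lemma~4.1]{KL20a}, the functor $F := - \otimes_{B_0(k\mathsf{G}_1)} M$ thereby induces a stable equivalence of Morita type, and \cite[Theorem~2.1(a)]{KL20a} ensures that $F$ sends each simple $B_0(k\mathsf{G}_1)$-module to an indecomposable non-projective $B_0(k\mathsf{G}_2)$-module.

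Condition~(II) is the main obstacle. I must show $F(\varphi_1) \cong \varphi_2$ and $F(\theta_1) \cong \theta_2$; the case $F(k_{\mathsf{G}_1}) \cong k_{\mathsf{G}_2}$ is given by \cite[Lemma~3.4(a)]{KL20a}. In contrast with Proposition~\ref{prop:PSL3Puig}, Corollary~\ref{cor:simplesPSU(q)} tells us that $\varphi_i$ and $\theta_i$ are \emph{not} trivial source modules, so the vertex-and-Scott-module argument from the \textsf{(W5(n))} case is not directly available. To work around this, I would combine two pieces of information. First, applying \cite[Lemma~3.4(c)]{KL20a} to the Scott module $\Sc(\mathsf{G}_1, \mathbb{B}_1) = \Sc(\mathsf{G}_1, Q_1)$ (uniserial of Loewy series $\boxed{k/\varphi/\theta/\varphi/k}$ by Lemma~\ref{OkyWaki}(a)) and peeling off the trivial socle and head via Lemma~\ref{StrippingOffMethod} yields
\[
F\bigl(\boxed{\varphi_1/\theta_1/\varphi_1}\bigr) \cong \boxed{\varphi_2/\theta_2/\varphi_2} \oplus (\mathrm{proj}).
\]
Second, since stable equivalences of Morita type commute with $\Omega$ up to projectives, $F(\Omega^2(k_{\mathsf{G}_1})) \cong \Omega^2(k_{\mathsf{G}_2}) \oplus (\mathrm{proj})$; combining this with the composition factor identity $[\Omega^2(k_{\mathsf{G}_i})] = [k_{\mathsf{G}_i}] + 3[\varphi_i] + [\theta_i]$ from Lemma~\ref{OkyWaki}(g) and the previous display forces $[F(\varphi_1)] \equiv [\varphi_2]$ and $[F(\theta_1)] \equiv [\theta_2]$ modulo classes of projective $B_0(k\mathsf{G}_2)$-modules.

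The hardest step is to upgrade these Grothendieck-class identities into actual module isomorphisms. Stripping-off alone cannot peel $\varphi$ or $\theta$ off the middle of $\boxed{\varphi/\theta/\varphi}$, because doing so would require knowing in advance that the images under $F$ are simple, which is precisely what we aim to prove. To break this circularity, I plan to exploit the fine submodule structure of $X_i := \Omega^2(k_{\mathsf{G}_i})$ recorded in Lemma~\ref{OkyWaki}(h)--(k), in particular $\soc(X_i) \cong \varphi_i$, the condition $k_{\mathsf{G}_i} \nmid \soc^2(X_i)$, and the two tightly constrained possibilities for the top quotient of $X_i$. These constraints, together with the indecomposability of $F(\varphi_1)$ and $F(\theta_1)$ from \cite[Theorem~2.1(a)]{KL20a}, should rule out the a priori possibility that $F(\varphi_1)$ or $F(\theta_1)$ shares a Grothendieck class with $\varphi_2$ or $\theta_2$ modulo projectives while being strictly larger than the corresponding simple, thereby establishing Condition~(II) and finishing the proof of (a).
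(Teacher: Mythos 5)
Your overall architecture coincides with the paper's: (b) is reduced to (a) via Lemmas~\ref{lem:CGp5956} and \ref{lem:Aut(P)}, and (a) is proved by checking Conditions (I) and (II) of Theorem~\ref{Thm:ProveExSMEQ}, with Condition (I) handled by the central involution, Proposition~\ref{pro:SU2^m} and the Brauer indecomposability result of \cite{KT20}. One small imprecision: $C_{\mathsf{G}_i}(z)$ is not isomorphic to $\GU_2(q_i)$ but is a quotient of $\GU_2(q_i)$ by a normal subgroup of \emph{odd index} (cf.\ \cite[Proposition~4(iii)]{ABG70}), which is why the paper routes the argument through Lemma~\ref{lem:G/Op'(G)} before invoking Proposition~\ref{pro:SU2^m}; the conclusion you need survives this correction.

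The genuine issue is that the step you yourself flag as hardest --- upgrading the Grothendieck-class identities to $F(\varphi_1)\cong\varphi_2$ and $F(\theta_1)\cong\theta_2$ --- is left as a declaration of intent (``these constraints should rule out\dots'') rather than an argument, and the Grothendieck-group detour you propose cannot by itself pin down an indecomposable projective-free module from its class modulo projectives. The paper closes this step without any Grothendieck-group computation, as follows. By Lemma~\ref{OkyWaki}(k) the uniserial module $Y_i=\boxed{\varphi_i/\theta_i/\varphi_i}$ is (up to the identifications furnished by the stripping-off method) a \emph{submodule} of $X_i=\Omega^2(k_{\mathsf{G}_i})$, and $X_1\otimes_{B_1}M\cong X_2\oplus(\mathrm{proj})$ by \cite[Lemma~3.4(d)]{KL20a}. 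Exactness of $-\otimes_{B_1}M$ then gives $(X_1/Y_1)\otimes_{B_1}M\cong X_2/Y_2\oplus(\mathrm{proj})$. Lemma~\ref{OkyWaki}(k) leaves only two shapes for each $X_i/Y_i$, namely $\boxed{\varphi_i/k_{\mathsf{G}_i}}$ or $k_{\mathsf{G}_i}\oplus\varphi_i$, and in every configuration one can strip off the trivial constituent in the socle (since $k_{\mathsf{G}_1}\otimes_{B_1}M\cong k_{\mathsf{G}_2}$) to obtain $\varphi_1\otimes_{B_1}M\cong\varphi_2\oplus(\mathrm{proj})$; indecomposability from \cite[Theorem~2.1(a)]{KL20a} then forces $\varphi_1\otimes_{B_1}M\cong\varphi_2$. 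With $\varphi$ settled, $\theta$ is \emph{not} obstructed by the ``middle of a uniserial module'' problem you worry about: one applies the stripping-off method twice to $Y_1\otimes_{B_1}M\cong Y_2\oplus(\mathrm{proj})$, once at the socle and once at the head, both of which are now known to be sent to $\varphi_2$, leaving $\theta_1\otimes_{B_1}M\cong\theta_2\oplus(\mathrm{proj})$ and hence $\cong\theta_2$. You should replace your closing paragraph with this quotient-by-$Y$ argument; the preceding preparation in your proposal is exactly what is needed to run it.
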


\begin{proof}
Again, as $G_{i}\lhd\mathsf{G}_{i}$ and $|\mathsf G_i/G_i|=(3,q+1)$ is odd for each 
$i~\in~\{1,2\}$, by Lemma~\ref{lem:CGp5956} and Lemma~\ref{lem:Aut(P)}, 
assertion (b) follows from assertion (a), so it suffices to prove (a).
\par
Set $M:={\mathrm{Sc}}(\mathsf{G}_1\times \mathsf{G}_2,\Delta P)$. For each $i~\in~\{1,2\}$ 
write  $B_{i}:=B_{0}(k\mathsf{G}_{i})$. Write $\Irr_{k}(B_{i})=\{k_{\mathsf{G}_{i}},\varphi_{i},\theta_{i}\}$
with $\dim_k\,\varphi_i={q_i}(q_i-1)$ and $\dim_k\,\theta_i=(q_i-1)({q_i}^2-q_i+1)$, 
and set $\mathbb{B}_{i}:=\mathbb{B}_{i}(q_{i})$
as in Notation~\ref{nota:PGU}. Moreover, let $Q_i\in\Syl_{2}(\mathbb B_i)$ such that 
$Q_{i}\leq P$ and let $X_{i}:=\Omega^{2}(k_{\mathsf{G}_{i}})$ as in Lemma~\ref{OkyWaki}.
Furthermore,  observe that 
$\mathcal F_P(\mathsf{G}_1)=\mathcal F_P(\mathsf{G}_2)$  and 
all involutions in $\mathsf{G}_i$ are $\mathsf{G}_i$-conjugate (see e.g. \cite[Theorem~5.3]{CG12} 
and/or \cite[Proposition~2 on p.11]{ABG70}).  It follows that it suffices to prove that Conditions~
(I) and~(II) of Theorem~\ref{Thm:ProveExSMEQ} \smallskip hold.\\
{\bf Condition (I)}.
A similar argument to the one used  in the proof of Proposition~\ref{prop:PSL3Puig} 
(Condition I) can be used. In the present case, if $z$ is an involution in the centre of $P$, 
then $C_{\mathsf{G}_{i}}(z)=:C_i$ is a quotient of $\GU_{2}(q)$ by a normal subgroup of odd 
index by \cite[Proposition 4(iii)]{ABG70}.
Hence, we obtain from Lemma~\ref{lem:G/Op'(G)} and Proposition~\ref{pro:SU2^m}
that $M_{z}:=\Sc(C_1\times C_2,\Delta P)$ induces a splendid Morita equivalence between 
$B_0(kC_1)$ and $B_0(kC_2)$ by Proposition~\ref{pro:SL2^m}(a). Moreover, 
$M_z=M(\Delta\langle z\rangle)$  by the Brauer indecomposability of $M$ proved 
in~\cite[Theorem~1.1]{KT20}, proving that Condition (I) is \smallskip verified.\\
{\bf Condition (II)}. 
Again, we have to prove that the functor $-\otimes_{B_1}M$ maps 
the simple $B_{1}$-modules to the simple $B_{2}$-modules, and again, we have 
$k_{\mathsf{G}_1}\otimes_{B_1}M\cong k_{\mathsf{G}_2}$ by \cite[Lemma~3.4(a)]{KL20a}. 
Thus, it remains to prove that $\varphi_{1}\otimes_{B_{1}}M\cong \varphi_{2}$ 
and $\theta_{1}\otimes_{B_{1}}M\cong \theta_{2}$.
\par
First recall from Lemma~\ref{OkyWaki}(a) that for each $i\in\{1, 2\}$ we have 
\begin{equation}\label{ScottBorel}
\Sc({\mathsf{G}}_i, Q_i)
=\boxed{\begin{matrix}k_{{\mathsf{G}}_i}\\ \varphi_i\\ \theta_i\\ \varphi_i\\k_{{\mathsf{G}}_i}\end{matrix}}\,.
\end{equation} and moreover  by \cite[Lemma~3.4(c)]{KL20a} we have 
$$\Sc({\mathsf{G}}_1, Q_1)\otimes_{B_{1}}M\cong 
\Sc({\mathsf{G}}_2, Q_2)\oplus (\text{proj})\,.$$
Thus, because we already know that 
$$\soc(\Sc({\mathsf{G}}_1, Q_1))\otimes_{B_1}M=k_{\mathsf{G}_1}\otimes_{B_1}M
\cong k_{\mathsf{G}_2}=\soc(\Sc({\mathsf{G}}_2, Q_2))\,,$$
the stripping-off method  (see Lemma~\ref{StrippingOffMethod}(a))
yields 
\begin{equation}\label{stripping-off-ScotBorel/socle} 
\boxed{\begin{matrix}k_{{\mathsf{G}}_1}\\ \varphi_1\\ \theta_1\\ \varphi_1\end{matrix}}
\otimes_{B_1}M \ \cong \ 
\boxed{\begin{matrix}k_{{\mathsf{G}}_2}\\ \varphi_2\\ \theta_2\\ \varphi_2\end{matrix}}
\oplus\text{(proj)}
\end{equation}
where for each $i\in\{1,2\}$, the latter uniserial module of length 4 is defined to be 
$$\boxed{\begin{matrix}k_{{\mathsf{G}}_i}\\ \varphi_i\\ \theta_i\\ \varphi_i\end{matrix}}
:=\Sc({\mathsf{G}}_i, Q_i)/\soc(\Sc({\mathsf{G}}_i, Q_i))=:Z_{i}$$ 
as in Lemma~\ref{OkyWaki}(k). Then, applying again the stripping-off method 
(Lemma~\ref{StrippingOffMethod}(b) this time) to equation (\ref{stripping-off-ScotBorel/socle}) and $\hd{Z}_{i}\cong k_{\mathsf{G}_{i}}$ ($i\in\{1,2\}$), we obtain that
\begin{equation}\label{Y_i}
\boxed{\begin{matrix}\varphi_1\\ \theta_1\\ \varphi_1\end{matrix}}
\otimes_{B_1} M
\ = \ 
\boxed{\begin{matrix}\varphi_2\\ \theta_2\\ \varphi_2\end{matrix}}
\oplus\text{(proj)}
\end{equation}
where for each $i\in\{1,2\}$, the latter uniserial module of length 3 is defined to be 
$$\boxed{\begin{matrix}\varphi_i\\ \theta_i\\ \varphi_i\end{matrix}}:=\rad(Z_{i})=:Y_{i}\,,$$
again as in Lemma~\ref{OkyWaki}(k).
Now, by the proof of Lemma~\ref{OkyWaki}(k), we also know that $Y_{i}$ is (up to identification) 
a submodule of $X_{i}$ for each $i\in\{1,2\}$, and 
$$X_{1}\otimes_{B_{1}}M\cong X_{2}\oplus (\text{proj})$$
by \cite[Lemma~3.4(d)]{KL20a}. Because of the way, we have defined $X_{i}$ and $Y_{i}$
 ($i\in\{1,2\}$) via the 
stripping-off method, it follows from the exactness of the functor $-\otimes_{B_{1}}M$ that 
$$X_{1}/Y_{1}\otimes_{B_{1}}M\cong  (X_{1}\otimes_{B_{1}}M)/(Y_{1}\otimes_{B_{1}}M)\cong X_{2}/Y_{2}\oplus (\text{proj})\,.$$ 
Lemma~\ref{OkyWaki}(k) gives, up to isomorphism, two possibilities for $X_{1}/Y_{1}$ and  two possibilities 
for $X_{2}/Y_{2}$, namely,
$$ \boxed{ \begin{matrix} \varphi_{1} \\ k_{\mathsf{G}_{1}}\end{matrix}}\text{ or }k_{\mathsf{G}_{1}}\oplus\varphi_{1}, \text{ and } \boxed{ \begin{matrix} \varphi_{2} \\ k_{\mathsf{G}_{2}}\end{matrix}}\text{ or }k_{\mathsf{G}_{2}}\oplus\varphi_{2}    \text{, respectively},$$
but in any configuration we can apply 
the stripping-off method again (Lemma~\ref{StrippingOffMethod}(a)) to strip off the trivial 
socle summand  of $X_{1}/Y_{1}$ and $X_{2}/Y_{2}$ and we obtain that
$$\varphi_{1}\otimes_{B_{1}}M\cong \varphi_{2}\oplus\text{(proj)}\,.$$
However, as $\varphi_{1}$ is simple, $\varphi_{1}\otimes_{B_{1}}M$ must be indecomposable 
by \cite[Theorem~2.1(a)]{KL20a}, 
proving that $\varphi_{1}\otimes_{B_{1}}M\cong \varphi_{2}$. Then, we can apply yet again the 
stripping-off method twice (once Lemma~\ref{StrippingOffMethod}(a) and once 
Lemma~\ref{StrippingOffMethod}(b)) to equation (\ref{Y_i}) and $\soc(Y_{i})$, 
respectively $\hd(Y_{i})$, ($i\in\{1,2\}$) to obtain that
$$\theta_{1}\otimes_{B_{1}}M\cong \theta_{2}\oplus\text{(proj)}\,.$$
However, again, as $\theta_{1}$ is simple, $\theta_{1}\otimes_{B_{1}}M$ must be indecomposable 
by \cite[Theorem~2.1(a)]{KL20a}, eventually proving that $\theta_{1}\otimes_{B_{1}}M\cong \theta_{2}$.
\end{proof}

%%%%%%%%%%%%%newpage
\section{Proofs of Theorem~\ref{thm:MainResult} and Theorem~\ref{thm:MoritaClasses}.}\label{sec:proofMainb}

We can now prove our main results, that is, Theorem~\ref{thm:MainResult} and Theorem~\ref{thm:MoritaClasses}. 
We recall that $G$ is a finite group with a fixed  Sylow $2$-subgroup $P\cong C_{2^n}\wr C_2$, where $n\geq 2$ is a fixed integer.

\begin{proof}[Proof of Theorem~\ref{thm:MainResult}]
{\rm (a)}
To start with, by Lemma~\ref{lem:G/Op'(G)}, we may assume that ${O_{2'}(G)=1}$ 
and therefore that $G$ is one of the groups listed in~Theorem~\ref{thm:classification_wr}. 
Furthermore, by Lemma~\ref{lem:CGp5956} and Lemma~\ref{lem:Aut(P)},
we may also assume  that $O^{2'}(G)=G$. Hence, 
 Theorem~\ref{thm:classification_wr}, applied a second time, implies 
 that $G$ belongs to family {\sf(Wj(n))} for some ${\sf j\in\{1,\cdots,6\}}$.
 \par
 It remains to prove that
{\sf j} is uniquely determined. So, suppose that $G=:G_{1}$ is a finite group belonging to 
family  ${\sf (Wj_1(n))}$ for  some ${\sf j_1}\in\{{\sf1},\cdots,{\sf6}\}$ and assume that  the 
following hypothesis is satisfied:
\begin{itemize}
\item[($\ast$)] $B_{0}(kG_{1})$ is splendidly Morita equivalent to the principal block 
$B_{0}(kG_{2})$ of a finite group $G_{2}$ belonging to family  ${\sf (Wj_2(n))}$  for  some 
${\sf j_2}\in\{{\sf 1},\cdots,{\sf 6}\}$.
\end{itemize}
For $i\in\{1,2\}$ set $B_i:=B_0(kG_i)$, and notice that ($\ast$) implies that 
$\ell(B_{1})=\ell(B_{2})$ and $k(B_{1})=k(B_{2})$ because these numbers 
are invariant under Morita equivalences. 
\par
Now, first assume that ${\sf j_1} = 1$. Then, it follows from Theorem~\ref{thm:k(B)l(B)} 
that $\ell(B_{1})=1$
and  $\ell(B_{2})>1$ if ${\sf j_2} > 1$, contradicting ($\ast$). Hence, we 
have ${\sf j_2} = 1$ and $G_{2}\cong G_{1}$. 
\par
Assume then  that  ${\sf j_1} = 2$. Then, by Theorem~\ref{thm:k(B)l(B)}, we have 
$\ell(B_{1})=2$ and by ($\ast$) we may also assume that ${\sf j_2}\in\{{\sf 2,3,4}\}$. 
If ${\sf j_2} \neq {\sf2}$, then, as $n\geq 2$, Theorem~\ref{thm:k(B)l(B)} yields 
\[
k(B_{1})= (2^{2n-1}+9{\cdot}2^{n-1}+4)/3\neq2^{2n-1}+2^{n+1}=k(B_{2})\,,
\] 
also contradicting ($\ast$), so that  ${\sf j_2} = 2$ and $G_{2}\cong G_{1}$. 
\par
Suppose next that $\sf j_1 = 3$.  Then, again by Theorem~\ref{thm:k(B)l(B)}, we have 
$\ell(B_{1})=2$ and by ($\ast$) we may assume that ${\sf j_2}\in\{{\sf 2,3,4}\}$. 
Moreover, by the previous case, we have ${\sf j_2}\neq {\sf 2}$. So, let us 
assume that ${\sf j_2}={\sf 4}$.
We can consider that $B_{1} =B_0(k\SL_2^n(q_1))$ and $B_{2}=B_0(k\SU_2^n(q_2))$ 
for prime powers $q_1$, $q_{2}$ such that  $({q_1-1)_2=2^n=(q_2+1)_2}$.  
Then, again, as $\SL_2^n(q_1)\lhd \GL_2(q_1)$ and $\SU_2^n(q_2)\lhd \GU_2(q_2)$ 
are normal subgroups of odd index, it follows from Lemma~\ref{lem:CGp5956} and Lemma~\ref{lem:Aut(P)}
that $B_0(k\GL_2(q_1))$ and $B_0(k\GU_2(q_2))$ are splendidly Morita 
equivalent,
and so 
Lemma~\ref{lem:G/Z(G)} implies that 
$B_0(k\PGL_2(q_1))$ and $B_0(k\PGU_2(q_2))$ are splendidly Morita equivalent.
Now, as $\PGL_2(q_1)\cong\PGU_2(q_1)$,  
we have that 
$\mathcal B_1:=B_0(k\PGL_2(q_1))$ and $\mathcal B_2:=B_0(k\PGL_2(q_2))$ are
splendidly Morita equivalent, 
%note that $\PGU_2(q')\cong\PGL_2(q')$), 
where 
$D_{2^{n+1}}\in\Syl_2(\PGL_2(q_1))\cap\Syl_2(\PGL_2(q_2))$
by the proofs of Proposition~\ref{pro:SL2^m} and Proposition~\ref{pro:SU2^m}.
%(note that $q'$ is odd). 
However, the conditions on $q_1$ and $q_2$ imply that 
$\mathcal B_1$ and $\mathcal B_2$  are in (5) and (6), respectively,
in the list of \cite[Theorem 1.1]{KL20a},  hence cannot be splendidly Morita equivalent.
Thus, we have a contradiction, proving  that ${\sf j_2}={\sf 3}$ if ${\sf j_1}={\sf 3}$. 
Moreover, swapping the roles of ${\sf j_1}$ and ${\sf j_2}$ in the previous argument, we 
obtain that ${\sf j_2}={\sf 4}$ if ${\sf j_1}={\sf 4}$. 
\par
Suppose next that ${\sf j_1}={\sf 5}$. Then, as above, Theorem~\ref{thm:k(B)l(B)} and ($\ast$) imply that $\ell(B_{1})=3$
 and ${\sf j_2}\in\{{\sf5},{\sf6}\}$.  So, assume  that ${\sf j_2}={\sf 6}$.
Hence, we can consider that $B_{1}=B_0(k\PSL_3(q_1))$ and $B_{2}=B_0(k\PSU_3(q_2))$ 
for prime powers $q_1$ and $q_2$ such that $(q_1-1)_2=2^n=(q_2+1)_2$.
However, $B_0(k\PSL_3(q_1))$ and $B_0(k\PSU_3(q_2))$ cannot be splendidly Morita equivalent 
by Lemma~\ref{lem:tsPSL_{3}(q)} and Corollary~\ref{cor:simplesPSU(q)}, because such an equivalence 
maps simple modules to simple modules and also trivial source modules to trivial source modules. 
 It follows that ${\sf j_2}={\sf 5}$ if ${\sf j_1}={\sf 5}$.
Again, swapping the roles of ${\sf j_1}$ and ${\sf j_2}$ in the previous argument, we 
obtain that ${\sf j_2}={\sf 6}$ if ${\sf j_1}={\sf 6}$. 
Finally, we observe that the claim about the Scott module is immediate by construction.\\
{\rm (b)}
Assume $G_1$ and $G_2$ both belong to family {\sf (Wj(n))} for a ${\sf j}\in\{\sf 3,{\sf4},{\sf5},{\sf6}\}$. 
Then, $\mathrm{Sc}(G_{1}\times G_{2}, \Delta P)$ induces a splendid Morita equivalence between  
$B_0(kG_{1})$ and $B_0(kG_{2})$ by Propositions~\ref{pro:SL2^m}, \ref{pro:SU2^m}, \ref{prop:PSL3Puig} and 
\ref{prop:PSU3} for $\sf j=\sf 3,4,5$ and~$\sf6$ respectively.
\end{proof}

\begin{proof}[Proof of Theorem~\ref{thm:MoritaClasses}]
It is clear from the definitions that any splendid Morita equivalence is in particular a Morita equivalence. Thus, it only remains to prove that two distinct splendid Morita equivalence classes of principal blocks in Theorem~\ref{thm:MainResult} do not merge into one Morita equivalence class.  In fact,  from the numbers $\ell(B)$ and $k(B)$ in Theorem~\ref{thm:k(B)l(B)}, it suffices to argue that the splendid Morita equivalence classes of principal blocks of groups of type {\sf (Wj(3))} and {\sf (Wj(4))}, respectively of type {\sf (Wj(5))} and {\sf (Wj(6))}, do not merge into one Morita equivalence class. 
In the former case, this is clear from the proof of Theorem~\ref{thm:MainResult}, because else $B_0(k\PGL_2(q_1))$ and $B_0(k\PGL_2(q_2))$  with $({q_1-1)_2=2^n=(q_2+1)_2}$ would be Morita equivalent, which would contradict Erdmann's classification of tame blocks in~\cite{Erd90}.
In the latter case, it follows from the decomposition matrices of $B_0(k\PSL_3(q_1))$ and $B_0(k\PSU_3(q_2))$  with $({q_1-1)_2=2^n=(q_2+1)_2}$ given in \cite[Proposition~6.12]{Sch15} and Lemma~\ref{lem:2decPSU3}, respectively, that these blocks are not Morita equivalent. The claim follows. 
\end{proof}

%%%%newpage
%%%%%%%%%%%%%%%%%%%%%%%%%%%%%%%%%%%%%%%%
\appendix 

\section{Appendix. On  \cite[Proposition 3.3(b)]{KL20b} }\label{App:A}%

The purpose of this appendix  is to fix a problem in the proof of \cite[Proposition~3.3(b)]{KL20b},
which was incomplete as written in \cite{KL20b}.  See Remark~\ref{Rem4.4}. \\

The next corollary is  the most essential in this appendix, namely, for the proof of 
(ii) $\Rightarrow$ (i) in Theorem \ref{thm:ScottFor_G-G/Z}.
This is already implicitly explained in the proof of \cite[Theorem 9.7.4]{Lin18}.

\begin{Corollary}[{}{See \cite[Theorem 9.7.4]{Lin18}}]\label{cor:Lin18-9.7.4}
The notation here is the same as that in \cite{Lin18} except that $\mathcal O$ is replaced by $k$.
Let $G, H$ be finite groups, let $b, c$ be blocks of $kG, kH$, respectively,
such that $kGb$ and $kHc$ have a common defect group $P$. Let $i$ and $j$ be
$P$-source idempotents of $kGb$ and $kHc$, respectively 
(and hence $i\in(kGb)^{\Delta P}$ and $j\in(kHc)^{\Delta P}$).
\par
Now, suppose that there is an indecomposable direct summand $M$ of the
$(kGb, kHc)$-bimodule $kGi\otimes_{kP}jkH$ such that the pair $(M, M^*)$ induces a Morita 
equivalence between $kGb$ and $kHc$.
Furthermore let $\varphi:=\varphi_M$ be the unitary interior $P$-algebra isomorphism 
${\varphi:i\,kG i\overset{\approx}{\rightarrow} j kHj}$ induced by $M$ as in
\cite[Theorem 9.7.4]{Lin18}.
Then, for any indecomposable right $kGb$-module $X$, 
\[Xi \ \cong \ (X\otimes_{kGb}\,Mj)_\varphi\text{ \ as right \,}i\,kG i\text{-modules}\]
where $(X\otimes_{kGb}Mj)_\varphi = X\otimes_{kGb}\,Mj$
as $k$-vector spaces and the right action of $i\,kGi$ is defined  using of $\varphi$.
\end{Corollary}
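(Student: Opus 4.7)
The plan is to reduce the corollary to a single bimodule isomorphism that comes directly from the construction of $\varphi$ in \cite[Theorem 9.7.4]{Lin18}, and then apply the functor $X\otimes_{kGb}-$. First I would make two standard identifications. For any right $kGb$-module $X$, there is a canonical isomorphism $Xi\cong X\otimes_{kGb}kGi$ of right $ikGi$-modules (given by $x\cdot i\mapsto x\otimes i$), where $kGi$ carries its natural $(kGb,ikGi)$-bimodule structure. Similarly, the twisted module $(X\otimes_{kGb}Mj)_\varphi$ unpacks as $X\otimes_{kGb}(Mj)_\varphi$, where $(Mj)_\varphi$ denotes the $(kGb,ikGi)$-bimodule obtained from the $(kGb,jkHj)$-bimodule $Mj$ by transporting the right action through $\varphi$. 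Once these are in place, the corollary reduces to producing an isomorphism
\[
\alpha:\ kGi\ \overset{\sim}{\longrightarrow}\ (Mj)_\varphi
\]
of $(kGb,ikGi)$-bimodules.

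Next I would argue that such an $\alpha$ is precisely what $\varphi_M$ encodes. The hypothesis that $M$ is an indecomposable direct summand of $kGi\otimes_{kP}jkH$ provides a $kGb$-linear map $kGi\to Mj$ (for instance, by sending $x$ to the image of $x\otimes j$ under the projection onto $M$, noting that $jj=j$ forces the image to lie in $Mj$), which is an isomorphism of left $kGb$-modules because $(M,M^*)$ induces a Morita equivalence. By the very construction of $\varphi_M$ in \cite[Theorem 9.7.4]{Lin18}, the unitary interior $P$-algebra isomorphism $\varphi:ikGi\overset{\sim}{\to}jkHj$ is characterised by the intertwining property $\alpha(x\cdot a)=\alpha(x)\cdot\varphi(a)$ for all $x\in kGi$ and $a\in ikGi$, which is exactly the statement that $\alpha$ is a $(kGb,ikGi)$-bimodule isomorphism from $kGi$ to $(Mj)_\varphi$.

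Finally, tensoring $\alpha$ with $X$ on the left over $kGb$ yields the chain of right $ikGi$-module isomorphisms
\[
Xi\ \cong\ X\otimes_{kGb}kGi\ \xrightarrow{\ \mathrm{id}\otimes\alpha\ }\ X\otimes_{kGb}(Mj)_\varphi\ =\ (X\otimes_{kGb}Mj)_\varphi,
\]
which is the desired conclusion; note that the indecomposability of $X$ plays no role in the argument. The step I expect to be the main obstacle is the middle one: extracting from \cite[Theorem 9.7.4]{Lin18} a sufficiently clean statement of the defining property of $\varphi_M$ to identify it with the intertwining condition above. This is where the particular form $M\mid kGi\otimes_{kP}jkH$ together with the fact that $i$ and $j$ are $P$-source idempotents must be used.
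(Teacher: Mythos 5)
Your proposal is correct and follows essentially the same route as the paper's proof: both rest on the isomorphism $\alpha\colon kGi\overset{\sim}{\to}Mj$ of left $kGb$-modules from the proof of \cite[Theorem 9.7.4]{Lin18} and on the fact that $\varphi_M$ is by construction the composite of the induced endomorphism-ring isomorphisms $\End_{kGb}(kGi)^{\mathrm{op}}\cong\End_{kGb}(Mj)^{\mathrm{op}}\cong\End_{kHc}(kHj)^{\mathrm{op}}$, which is exactly your intertwining condition $\alpha(xa)=\alpha(x)\varphi(a)$. Applying $X\otimes_{kGb}-$ then gives the claim, as in the paper.
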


\begin{proof}
The notation here is the same as in the proof of \cite[(i) $\Rightarrow$ (ii) in Theorem~9.7.4]{Lin18}
except that $\mathcal O$ is replaced by $k$.
First we know already $Mj\cong kGi$ as $(kG, kP)$-bimodules via $\alpha$ there.
In the following the endomorphism ring of a left $R$-module $X$ for a ring $R$ is denoted by $\End_R(X)$.
Then, since $Mj$ can be considered as a right not only $kP$-module but also $\End_{kGb}(Mj)$-module.
Since $Mj\cong kGi$ as left $kGb$-modules from the above,
it follows that $\End_{kGb}(Mj)\cong\End_{kGb}(kGi)\cong (i\,kGi)^{\text{op}}$ as $k$-algebras.
On the other hand, 
$\End_{kGb}(Mj)\cong\End_{kGb}(M\otimes_{kHc}\,kHj)\cong\End_{kHc}(kHj)\cong (j\,kHj)^{\text{op}}$ as
$k$-algebras
(the second isomorphism comes from the fact that
$M$ realises a Morita equivalence between $kGb$ and $kHc$). 
Since the isomorphism $\varphi=\varphi_M$ is defined by using these isomorphisms
(see the final several lines in the proof of \cite[(i) $\Rightarrow$ (ii) in Theorem~9.7.4]{Lin18}),
we eventually obtain that 
$(X \otimes_{kGb}Mj)_\varphi \cong X\otimes_{kGb} kGi\cong Xi$ as right $i\,kGi$-modules.
\end{proof}

\begin{Theorem}[{}{See Proposition 3.3(b) in \cite{KL20b}}]\label{Prop3.3}\label{thm:ScottFor_G-G/Z}
Suppose that $G_1$ and $G_2$ are finite groups with a common Sylow $p$-subgroup $P$,
%{\color{red}such that $\mathcal F_P(G_1)=\mathcal F_P(G_2)$}, 
and assume that 
$Z$ is a subgroup of $P$ such that $Z\leq Z(G_1)\cap Z(G_2)$. Write $\overline{G_1}:=G_1/Z$,
$\overline{G_2}:=G_2/Z$ and $\overline P:=P/Z$.  
Then, the following assertions are equivalent:
   \begin{itemize}
\item[\rm(i)] ${\mathrm{Sc}}(G_1\times G_2,\Delta P)$ induces a Morita equivalence 
between $B_0(kG_1)$ and $B_0(kG_2)$;
\item[\rm(ii)] ${\mathrm{Sc}}(\overline{G_1}\times\overline{G_2}, \Delta \overline P)$  
induces a Morita equivalence between $B_0(k\overline{G_1})$ and $B_0(k\overline{G_2})$.
   \end{itemize}
\end{Theorem}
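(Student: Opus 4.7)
The plan is to establish the two implications separately, with the harder direction (ii) $\Rightarrow$ (i) relying on Corollary~\ref{cor:Lin18-9.7.4}. Throughout, set $b := B_0(kG_1)$, $c := B_0(kG_2)$, $\overline{b} := B_0(k\overline{G_1})$, $\overline{c} := B_0(k\overline{G_2})$, $M := \Sc(G_1 \times G_2, \Delta P)$, and $\overline{M} := \Sc(\overline{G_1} \times \overline{G_2}, \Delta \overline{P})$, and denote by $\pi_i : kG_i \twoheadrightarrow k\overline{G_i}$ the canonical quotient. The observation driving both directions is that $\Delta Z \leq \Delta P$ acts trivially on $M$ (as a summand of $k_{\Delta P}{\uparrow}^{G_1 \times G_2}$); in bimodule language, $zm = mz$ for all $z \in Z$ and $m \in M$.

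For (i) $\Rightarrow$ (ii), I would use that every simple $kG_i$-module carries trivial $Z$-action, since central $p$-elements act as scalars of $p$-power order in $k^\times$ and the only such scalar in characteristic $p$ is $1$. Combined with $zm = mz$, this implies that the Morita equivalence $-\otimes_b M$ preserves the full subcategories of modules on which $Z$ acts trivially. Via inflation, these subcategories are identified with $\mathrm{mod}\text{-}\overline{b}$ and $\mathrm{mod}\text{-}\overline{c}$, and the restricted equivalence is induced by the quotient $(\overline{b}, \overline{c})$-bimodule $M/(zm - m : z \in Z,\ m \in M)$. Using the standard identity $\Sc(G, H) = \Inf_{G/N}^{G}\Sc(G/N, H/N)$ (for $N \trianglelefteq G$ with $N \leq H$) applied to $N = \Delta Z \leq \Delta P$, this quotient bimodule is identified with $\overline{M}$.

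The main direction (ii) $\Rightarrow$ (i) I would handle via source algebras. First, choose $\Delta P$-source idempotents $i \in b^{\Delta P}$ and $j \in c^{\Delta P}$ whose images $\overline{i} := \pi_1(i)$ and $\overline{j} := \pi_2(j)$ are $\Delta \overline{P}$-source idempotents of $\overline{b}$ and $\overline{c}$; such compatible choices exist because $\ker(\pi_i|_b) = I(kZ) \cdot b$ is nilpotent, so primitive idempotents lift. By assumption and Corollary~\ref{cor:Lin18-9.7.4} applied to $\overline{M}$, there is an interior $\overline{P}$-algebra isomorphism $\overline{\varphi} : \overline{i}\,k\overline{G_1}\,\overline{i} \to \overline{j}\,k\overline{G_2}\,\overline{j}$ such that $\overline{X}\overline{i} \cong (\overline{X} \otimes_{\overline{b}} \overline{M}\overline{j})_{\overline{\varphi}}$ for every $\overline{b}$-module $\overline{X}$. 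Next, I would lift $\overline{\varphi}$ to an interior $P$-algebra isomorphism $\varphi : ikG_1 i \to jkG_2 j$, exploiting that the source algebras $A := ikG_1 i$ and $A' := jkG_2 j$ are free $kZ$-modules (as $\Delta P$-fixed summands of $b$, which is $kZ$-free because $kG_1$ is $kZ$-free and $kZ$ is local), and that the quotient maps $A \twoheadrightarrow \overline{i}k\overline{G_1}\overline{i}$, $A' \twoheadrightarrow \overline{j}k\overline{G_2}\overline{j}$ are $kZ$-linear surjections with kernels $I(kZ) \cdot A$ and $I(kZ) \cdot A'$. Finally, since every simple $b$-module $S$ is inflated from a simple $\overline{b}$-module, the relation $Si \cong (S \otimes_b Mj)_\varphi$ follows from the analogous formula for $\overline{\varphi}$, and the converse direction of Corollary~\ref{cor:Lin18-9.7.4} yields that $M$ induces the desired Morita equivalence between $b$ and $c$.

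The main obstacle is the lifting of $\overline{\varphi}$ to an interior $P$-algebra isomorphism $\varphi$: while $A$ and $A'$ are $kZ$-free of the same rank as $\overline{A}$ and $\overline{A}'$, the surjections $A \to \overline{A}$, $A' \to \overline{A}'$ need not split as algebra maps in general, so a splitting has to be constructed using the full interior $P$-algebra structure and the specific combinatorics of source idempotents for principal blocks with central $p$-subgroups inside the defect group. This is precisely the point that was implicit in the original proof of \cite[Proposition~3.3(b)]{KL20b}, and for which the clean framework is supplied by the corollary above.
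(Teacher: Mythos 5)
Your overall architecture for (ii) $\Rightarrow$ (i) — source idempotents, an interior $\overline P$-algebra isomorphism downstairs, a lift to an interior $P$-algebra isomorphism upstairs, then Linckelmann's characterisation of Morita equivalences via source algebras — is the same as the paper's. But there are two genuine gaps. First, the lifting of $\overline\varphi$ to $\varphi$, which you correctly identify as "the main obstacle", is left unconstructed: your observation that the source algebras are $kZ$-free does not produce an algebra (let alone interior $P$-algebra) splitting, and Corollary~\ref{cor:Lin18-9.7.4} does not supply it either — that corollary only relates $Xi$ to $(X\otimes_{kGb}Mj)_\varphi$ once a Morita bimodule is already in hand. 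The lifting is exactly Puig's theorem on source algebras of $p$-central group extensions \cite[Corollary~1.12]{Pui01}, which is the external input the paper's proof cites; without it the main direction is incomplete. Second, even after lifting, Linckelmann's theorem only yields \emph{some} indecomposable summand $\mathcal M$ of $kG_1j_1\otimes_{kP}j_2kG_2$ inducing a Morita equivalence between $B_0(kG_1)$ and $B_0(kG_2)$; one must still prove $\mathcal M\cong \Sc(G_1\times G_2,\Delta P)$. Your closing step assumes the relation $Si\cong (S\otimes_b Mj)_\varphi$ for the Scott module $M$ itself and appeals to a "converse direction" of Corollary~\ref{cor:Lin18-9.7.4} that is not stated; the paper instead shows that $\mathcal M$ sends $k_{G_1}$ to $k_{G_2}$ (using the corollary both for $\mathcal M$ and for the Scott module downstairs, together with a deflation argument) and then identifies $\mathcal M$ with the Scott module via the adjunction in \cite{Rou01} and \cite[Exercise~(27.5)]{The95}.

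There is also a flaw in your identification of the deflated bimodule with $\Sc(\overline{G_1}\times\overline{G_2},\Delta\overline P)$, and it is precisely the subtlety this appendix exists to repair (see Remark~\ref{Rem4.4}). The identity $\Sc(G,H)=\Inf_{G/N}^{G}\Sc(G/N,H/N)$ applies to $N=\Delta Z$, but $(G_1\times G_2)/\Delta Z$ is \emph{not} $\overline{G_1}\times\overline{G_2}$: one must further quotient by $(Z\times Z)/\Delta Z$, a central $2'$-free $p$-subgroup meeting $\Delta P/\Delta Z$ trivially, and for such a deflation the image of a Scott module is only known to \emph{contain} $\Sc(\overline{G_1}\times\overline{G_2},\Delta\overline P)$ as a direct summand — equality can fail a priori. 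In your direction (i) $\Rightarrow$ (ii) this is repairable (and is how the paper argues): the deflated bimodule induces a Morita equivalence by your subcategory argument, hence is indecomposable, hence coincides with its Scott summand. But as written, your justification reproduces the original error of \cite{KL20b} rather than fixing it, and in direction (ii) $\Rightarrow$ (i) the same issue resurfaces when you try to transfer the formula for simple modules from $\overline\varphi$ to $\varphi$.
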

\begin{proof}
Let $i\in\{1,2\}$. 
Write $B_i:=B_0(kG_i)$ and let $\overline{B_i}$ be the image of $B_i$
under the {$k$-algebra} epimorphism $kG_i\twoheadrightarrow k\overline{G_i}$
induced by the quotient group homomorphism $G_i\twoheadrightarrow\overline{G_i}$.
Then \cite[Chap. 5 Theorem~8.11]{NT89} says that
$\overline{B_i}=B_0(k\overline{G_i})$.
Furthermore
$\overline{B_i}\cong k\overline{G_i}\otimes_{kG_i}B_i\otimes_{kG_i} {k\overline{G_i}}$
as $(k\overline{G_i},k\overline{G_i})$-bimodules.
Write $M:={\mathrm{Sc}}(G_1\times G_2,\Delta P)$ 
and {$N:=M^*={\mathrm{Sc}}(G_2\times G_1,\Delta P)$}. 
Set $\overline M:=k\overline{G_1}\otimes_{kG_1}M\otimes_{kG_2}k\overline{G_2}$.
Then, the following holds:\\
\begin{align*}
\overline M&\,\,\Big|\,\,
k\overline{G_1}\otimes_{kG_1}
\Big( {\mathrm{Ind}}_{\Delta P}^{G_1\times G_2}{(k_{\Delta P})}
\Big)
\otimes_{kG_2}\,k\overline{G_2}
\\&=
k\overline{G_1}\otimes_{kG_1}(kG_1\otimes_{kP}\,kG_2)\otimes_{kG_2}\,k\overline{G_2}
\\&\cong
k\overline{G_1}\otimes_{kP}\,k\overline{G_2}
\\&\cong
k\overline{G_1}\otimes_{k\overline P}\,k\overline{G_2}
\\&\cong 
{\mathrm{Ind}}_{\Delta\overline P}^{\overline{G_1}\times\overline{G_2}}(k_{\Delta\overline P}).
\end{align*}
Note furthermore that 
$\overline M$ obviously has the trivial $k(\overline{G_1}\times\overline{G_2})$-module
$k_{\overline{G_1}\times\overline{G_2}}$ as an epimorphic image.
Set $\mathfrak M:={\mathrm{Sc}}(\overline{G_1}\times\overline{G_2},\,\Delta{\overline P})$.
Then
\begin{equation}\label{Prop3.3(b)} 
\mathfrak M
\,\Big|\,\overline M \quad
\text{ (equality does not necessarily hold)}.
\end{equation}
(i) $\Rightarrow$ (ii):
Set $\overline N:=k\overline{G_2}\otimes_{kG_2}N\otimes_{kG_1}k\overline{G_1}$.
Then, 
\begin{align*}
\overline M\otimes_{\overline{B_2}}\overline N
&\cong
\overline M\otimes_{k\overline{G_2}}\overline N
\\&\cong
(k\overline{G_1}\otimes_{kG_1}M\otimes_{kG_2}k\overline{G_2})\otimes_{k\overline{G_2}}\,
(k\overline{G_2}\otimes_{kG_2}N\otimes_{kG_1}k\overline{G_1})
\\&\cong
k{\overline G_1}\otimes_{kG_1}(M\otimes_{kG_2}k\overline{G_2})\otimes_{kG_2}N\otimes_{kG_1}k\overline{G_1}
\\&\cong
k{\overline G_1}\otimes_{kG_1}(k\overline{G_1}\otimes_{kG_1}M)\otimes_{kG_2}N\otimes_{kG_1}k\overline{G_1}
\\& \qquad\qquad
\text{ since } M\otimes_{kG_2}k\overline{G_2}\cong k\overline{G_1}\otimes_{kG_1}M \ \ 
\text{ as }(kG_1,kG_2)\text{-bimodules}
\\&\cong
(k{\overline G_1}\otimes_{kG_1}k\overline{G_1})\otimes_{kG_1}M\otimes_{kG_2}N\otimes_{kG_1}k\overline{G_1}
\\&\cong
(k{\overline G_1}\otimes_{k\overline{G_1}}k\overline{G_1})
                   \otimes_{kG_1}M\otimes_{kG_2}N\otimes_{kG_1}k\overline{G_1}
\\&\cong
k\overline{G_1}\otimes_{kG_1}(M\otimes_{kG_2}N)\otimes_{kG_1}k\overline{G_1}
\\&\cong
k\overline{G_1}\otimes_{kG_1}B_1\otimes_{kG_1}k\overline{G_1}\ \ \text{ by (i)}
\\&\cong
\overline{B_1}.
\end{align*}
Since $\overline{B_i}$ is a symmetric $k$-algebra for $i=1, 2$, the above already shows that
the pair $(\overline M, {\overline N})$ induces a Morita equivalence between
$\overline{B_1}$ and $\overline{B_2}$,
and hence $\overline M$ is indecomposable as a right
$k(\overline G_1\times\overline G_2)$-module,
which implies that $\mathfrak M\cong\overline M$
from (\ref{Prop3.3(b)}).
\medskip

\noindent (ii) $\Rightarrow$ (i):
As in \cite[p.822]{Lin01}
there exist $P$-source idempotents $j_i$ of $B_i$ for $i=1,2$ with
$M\,|\, (kG_1\,j_1\otimes_{kP}\,j_2\,kG_2)$.
Then, for $i=1,2$,  the image $\overline{j_i}$ of $j_i$
via the canonical $k$-algebra epimorphism $kG_i\twoheadrightarrow k\overline{G_i}$ is a  
$\overline P$-source idempotent of $\overline{B_i}$ 
(see \cite[Chap.5 Theorem 8.11]{NT89}, \cite[\S 3]{KP90} and \cite[Lemma~4.1]{Kaw03}).
Hence,
\begin{equation}\label{ScottForOvelineG}
\mathfrak M\,|\, (k\overline{G_1}\,\overline{j_1}\otimes_{k\overline P}\,\overline{j_2}\,k\overline{G_2})
\end{equation}
from (\ref{Prop3.3(b)}).
Now, we apply \cite[(i) $\Rightarrow$ (ii) in Theorem~9.7.4]{Lin18} to the blocks $\overline{B_1}$ and
$\overline{B_2}$.
Namely, the existence of such an $\mathfrak M$ induces 
a unitary interior $\overline P$-algebra isomorphism 
$$\Phi:=\varphi_{\mathfrak M}\ : \  \overline{j_1}\,k\overline{G_1}\,\overline{j_1}\ 
\overset{\approx}{\rightarrow}\ \overline{j_2}\,k\overline{G_2}\,\overline{j_2}.$$
Thanks to \cite[Corollary 1.12]{Pui01} (see \cite{Kue01}), $\Phi$ lifts to a unitary interior
$P$-algebra isomorphism $\varphi\,:\, j_1 kG_1\,j_1 \overset{\approx}{\rightarrow} j_2 kG_2\,j_2$,
that is, $\Phi(\overline a)=\overline{\varphi(a)}$ for every $a\in j_1\,kG_1\,j_1$.
Then, by making use of $\varphi$ it follows from \cite[(ii) $\Rightarrow$ (i) in Theorem 9.7.4]{Lin18}
that there is an indecomposable direct summand $\mathcal M$ of the
$(B_1,B_2)$-bimodule $kG_1\,j_1\otimes_{kP}j_2\,kG_2$ such that the pair 
$(\mathcal M,{\mathcal M}^\vee)$
induces a Morita equivalence between $B_1$ and $B_2$
%(in fact, $\mathcal M:=kG_1\,j_1\otimes_{j_1\,kG_1\,j_1}\, _\varphi ( j_2\,kG_2)$).
Set $S_2:=k_{G_1}\otimes_{B_1}\mathcal M$. 
Then, though we do not know that $\mathcal M=\Sc(G\times H,\Delta P)$ yet,
Corollary~\ref{cor:Lin18-9.7.4} yields that 
$(S_2\,j_2)_\varphi \cong k_{G_1}\,{j_1}$ as right $j_1\,B_1\,j_1$-modules.
Hence by sending these via the canonical epimorphism $kG_i\twoheadrightarrow k{\overline{G_i}}$,
\begin{equation}\label{CorA1-mathfrakM}
(\overline{S_2}\,\overline{j_2})_{\Phi} \cong
(\overline{S_2}\,\overline{j_2})_{\overline{\varphi}}
\cong k_{\overline{G_1}}\,\overline{j_1} \ 
\text{ as right }\overline{j_1}\,\overline{B_1}\,\overline{j_1}\text{-modules.}
\end{equation}

Now, we claim that $\mathcal M\cong M$.
Since the Scott module $\mathfrak M$ induces a Morita equivalence
between $\overline{B_1}$ and $\overline{B_2}$ and $\mathfrak M$ is related to $\overline{j_i}$
for $i=1, 2$ 
(see (\ref{ScottForOvelineG})) and $\Phi$, it follows from Corollary~\ref{cor:Lin18-9.7.4} and 
\cite[Lemma 3.4(a)]{KL20a} that
\begin{equation}\label{CorA1} 
k_{\overline{G_1}}\,\overline{j_1}
\ \cong \ (k_{\overline{G_1}}\otimes_{\overline{B_1}}\,\mathfrak M\,\overline{j_2})_\Phi
\ \cong \ (k_{\overline{G_2}}\,\overline{j_2})_\Phi   
\ \ \text{ as right }\overline{j_1}\overline{B_1}\,\overline{j_1}\text{-modules.}
\end{equation}
Hence (\ref{CorA1-mathfrakM}) and (\ref{CorA1}) yield that
$(\overline{S_2}\,\overline{j_2})_{\Phi} \cong
(k_{\overline{G_2}}\,\overline{j_2})_\Phi$ 
as right $\overline{j_1}\overline{B_1}\,\overline{j_1}$-modules, and hence
$\overline{S_2}\,\overline{j_2} \cong k_{\overline{G_2}}\,\overline{j_2}$ 
as right $\overline{j_2}\overline{B_2}\,\overline{j_2}$-modules.
Since the pair $(\overline{B_2}\,\overline{j_2}, \overline{j_2}\overline{B_2})$ induces
the canonical Morita equivalence between $\overline{B_2}$ and its source algebra
$\overline{j_2}\,\overline{B_2}\,\overline{j_2}$, we get that
$\overline{S_2}\cong k_{\overline{G_2}}$ as right $\overline{B_2}$-modules,
say as right $k\overline{G_2}$-modules.
Since $S_2$ and $k_{G_2}$ are both simple right $kG_2$-modules, %so that
$Z$ is in their kernels. Thus,
${\mathrm{Def}}^{G_2}_{\overline{G_2}}\, S_2 \cong {\mathrm{Def}}^{G_2}_{\overline{G_2}}\,k_{G_2}$
as right $k\overline{G_2}$-modules where ${\mathrm{Def}}$ is the deflation.
%So that 
Hence $S_2\cong k_{G_2}$ as right $kG_2$-modules.
This means by the definition of $S_2$ that the bimodule $\mathcal M$ transposes $k_{G_1}$ 
to $k_{G_2}$. Thus, the adjunction in \cite[Line 9 on p.105]{Rou01} implies that
$\Hom_{(k(G_1\times G_2))^{\text{op}}}(\mathcal M, k_{(G_1\times G_2)})\,{\not=}\,\{ 0 \}$.
Therefore $\mathcal M\cong\Sc(G_1\times G_2,\Delta P)=M$ by \cite[Exercise (27.5)]{The95}.
We are done.
\end{proof}

%newpage
\begin{Remark}\label{Rem4.4}
On the right-hand side of line 2 of \cite[Lemma~3.1(b)]{KL20b},
$\Sc(\overline G\times\overline H, \Delta\overline P)$ must be replaced by
$\Sc(\overline G\times\overline H, \Delta\overline P)\oplus\mathcal N$ for a
{\it possibly non-zero} $k(\overline G\times\overline H)$-module $\mathcal N$
as  in (\ref{Prop3.3(b)}).
As a result, the proof of \cite[Proposition 3.3(b)]{KL20b} as given in \cite{KL20b}  
holds only in the case in which $\mathcal{N}=\{0\}$. 
However, Theorem~\ref{Prop3.3} now proves that 
 \cite[Proposition~3.3(b)]{KL20b} is correct, also in the case in which $\mathcal{N}\neq\{0\}$.
As a consequence, \cite{KL20b} and \cite[proof of Proposition 5.2]{KLS20}, where
\cite[Proposition 3.3(b)]{KL20b} are used, are not affected and remain true with the given proofs.
Moreover, we would like to mention that the results of \cite{KS23}, 
together with further explicit calculations, give an alternative way to establish
the validity of \cite[Proposition~3.3(b)]{KL20b} 
in special cases, e.g. when the defect groups are generalised quaternion or semi-dihedral $2$-groups.
\end{Remark}

%%%newpage
\vspace{4mm}
\noindent {\bf Acknowledgements.} 
{\small 
The authors are thankful to the referees for helpful comments which led to a substantial 
clarification of this manuscript. They would like to thank 
Gerhard Hiss and Gunter Malle for useful information on the modular representation 
theory of the finite groups of Lie type involved in this article.
They would  like to thank  Naoko Kunugi
for pointing out a gap in the proof of \cite[Proposition~3.3(b)]{KL20b}
(see Appendix~\ref{App:A}), as well as Markus Linckelmann and Yuanyang Zhou
for answering several of their questions on Puig's theory.}

%\begin{thebibliography}{999999} 

\end{document}